\newlength\cellsize \setlength\cellsize{10\unitlength}
\newcommand\cellify[1]{\def\thearg{#1}\def\nothing{}%
\ifx\thearg\nothing\vrule width0pt height\cellsize depth0pt%
  \else\hbox to 0pt{\usebox2\hss}\fi%
  \vbox to 10\unitlength{\vss\hbox to 10\unitlength{\hss$#1$\hss}\vss}}
\newlength{\bibitemsep}\setlength{\bibitemsep}{.2\baselineskip plus .05\baselineskip minus .05\baselineskip}
\newlength{\bibparskip}\setlength{\bibparskip}{0pt}
\let\oldthebibliography\thebibliography
\renewcommand\thebibliography[1]{%
  \oldthebibliography{#1}%
  \setlength{\parskip}{\bibitemsep}%
  \setlength{\itemsep}{\bibparskip}%
}
\newcommand\tableau[1]{\vtop{\let\\=\cr
\setlength\baselineskip{-10000pt}
\setlength\lineskiplimit{10000pt}
\setlength\lineskip{0pt}
\halign{&\cellify{##}\cr#1\crcr}}}
\newcommand{\cirfy}[1]{\def\thearg{#1}\def\nothing{}%
\ifx\thearg\nothing\vrule width0pt height\cellsize depth0pt%
  \else\hbox to 0pt{\usebox7\hss}\fi%
  \vbox to 10\unitlength{\vss\hbox to 10\unitlength{\hss$#1$\hss}\vss}}
\newcommand\cirtab[1]{\vtop{\let\\=\cr
\setlength\baselineskip{-10000pt}
\setlength\lineskiplimit{10000pt}
\setlength\lineskip{0pt}
\halign{&\cirfy{##}\cr#1\crcr}}}
\theoremstyle{plain}
\newtheorem*{prop}{Proposition}
\newtheorem{thm}{Theorem}
\newtheorem*{lemma}{Lemma}
\newtheorem*{cor}{Corollary}
\theoremstyle{definition}
\newtheorem*{example}{Example}
\newtheorem*{definition}{Definition}
\newtheorem*{rem}{Remark}
\theoremstyle{remark}
\newcommand{\lie}[1]{\mathfrak{#1}}
\newcommand\bc{\mathbb C}
\newcommand\bn{\mathbb N}
\newcommand\bz{\mathbb Z}
\newcommand{\tikzcircle}[2][black]{\tikz[baseline=-0.5ex]\draw[#1,radius=#2] (0,0) circle ;}
\newcounter{cnt}
\def\mydggeometry{\makeatletter\dg@YGRID=1\dg@XGRID=20\unitlength=0.003pt\makeatother}
\makeatother \theoremstyle{remark}
\numberwithin{equation}{section}
\def\section{\def\@secnumfont{\mdseries}\@startsection{section}{1}%
  \z@{.7\linespacing\@plus\linespacing}{.5\linespacing}%
  {\normalfont\scshape\centering}}
\def\subsection{\def\@secnumfont{\bfseries}\@startsection{subsection}{2}%
  {\parindent}{.5\linespacing\@plus.7\linespacing}{-.5em}%
  {\normalfont\bfseries}}
\def\subsubsection{\def\@subsecnumfont{\bfseries}\@startsection{subsubsection}{3}%
  {\parindent}{.5\linespacing\@plus.7\linespacing}{-.5em}%
  {\normalfont\bfseries}}
\title[Root generated subalgebras and $\pi$-systems]{Graded embeddings, root generated subalgebras and $\pi$-systems for quasisimple Kac-Moody superalgebras}
\author{Irfan Habib}
\address{Technical University of Munich, TUM School of Computation, Information and Technology, Department of Mathematics, Boltzmannstr. 3, 85748 Garching bei München, Germany}
\email{irfan.habib@tum.de}
\thanks{}
\author{Deniz Kus}
\address{Technical University of Munich, TUM School of Computation, Information and Technology, Department of Mathematics, Boltzmannstr. 3, 85748 Garching bei München, Germany}
\email{deniz.kus@tum.de}
\thanks{D.K. was partially funded by the Deutsche Forschungsgemeinschaft (DFG, German Research Foundation) –  grant 562506224.}
\author{Chaithra Pilakkat}
\address{Department of Mathematics, Indian Institute of Science, Bangalore 560012, India}
\email{chaithrap@iisc.ac.in}
\thanks{C.P. was partially supported by Prime Minister's Research Fellowship (TF/PMRF-22-5467.03).}
\subjclass[2020]{}
\begin{document}
\begin{abstract}
Motivated by a construction of Gorelik and Shaviv, we show that the real roots of a root generated subalgebra associated with a $\pi$-system contained in the positive roots are obtained by successive applications of even and odd reflections to the $\pi$-system, and that they form a real closed subroot system. Using this result, we establish an analogue of Dynkin’s bijection in the setting of symmetrizable quasisimple Kac–Moody superalgebras. In addition, we obtain several results on root strings in the super setting, analogous to those of Billig and Pianzola, and show that graded embeddings arise as root generated subalgebras associated with linearly independent $\pi$-systems.
\end{abstract}
\maketitle

\section{Introduction}
Understanding graded embeddings of Kac–Moody (super)algebras is important both from a structural perspective and due to their applications in mathematical physics. Of particular interest are graded embeddings in which the Chevalley generators are mapped to real root vectors. Such embeddings admit both algebraic and combinatorial interpretations.

For a finite-dimensional semisimple Lie algebra $\mathring{\lie g}$, studying graded embeddings is equivalent to understanding its semisimple subalgebras. In his seminal work \cite{dynkin52semisimple}, Dynkin classified all semisimple subalgebras of $\mathring{\lie g}$ and showed that they are in bijection both with linearly independent $\pi$-systems of $\mathring{\Delta}$ (the root system of $\mathring{\lie g}$) contained in the set of positive roots and with the closed subroot systems of $\mathring{\Delta}$. It is well known that in the finite-dimensional setting, any $\pi$-system consisting of positive roots is automatically linearly independent. Closed subroot systems have been classified via the classification of maximal closed subroot systems, which was carried out in \cite{BorelDeS}. Together, these results provide a complete combinatorial solution to the graded embedding problem for finite-dimensional semisimple Lie algebras.

The finite-dimensional picture does not extend to arbitrary Kac–Moody algebras $\lie g$, and counterexamples already appear for low-rank untwisted affine Lie algebras. In this broader setting, the appropriate replacements for closed subroot systems and semisimple subalgebras are, respectively, real closed subroot systems and root-generated subalgebras; see Section~\ref{secbasicdef} for precise definitions. For general symmetrizable Kac–Moody algebras, foundational results were obtained by Morita \cite{morita1989certain} and Naito \cite{naito1992regular}, who showed that linearly independent $\pi$-systems give rise to graded embeddings. This idea has since been used repeatedly to construct graded embeddings in subsequent works; see, for example, \cite{feingold2004subalgebras,habib2024pisystems,viswanath08embedding}. In the affine case, a more classification-oriented approach was developed in \cite{roy2019maximal}, where the authors classified (maximal) real closed subroot systems of affine Lie algebras and, using this classification, proved that every root-generated subalgebra is generated by a $\pi$-system (see \cite{habib2024maximal,KV21a} for generalizations to affine reflection systems).
Recently, the authors of \cite{idv2023root} unified the results described above and proved that an analogue of Dynkin’s bijection holds for arbitrary symmetrizable Kac–Moody algebras. In this setting, every real closed subroot system admits a unique $\pi$-system (see Section~\ref{secpi} for the definition), and every root-generated subalgebra is generated by its associated $\pi$-system. Moreover, an explicit presentation of these root-generated subalgebras is provided, and they are shown to be of Kac–Moody type.

The motivation of this article is to study graded embeddings in the super setting from a combinatorial perspective, with the broader goal of investigating the structure of Kac–Moody superalgebras. We focus on the root-generated subalgebras of quasi-simple, regular, symmetrizable Kac–Moody superalgebras that are generated by $\pi$-systems. Motivated by a construction of Gorelik and Shaviv \cite{gorelik17generalized}, we determine their set of real roots (see Theorem~\ref{proprealroots}) and show that they form a real closed subroot system. Just as in the Lie algebra case, the properties of root strings play a central role in understanding root-generated subalgebras. We determine the real roots inside a root string for Kac–Moody superalgebras in the spirit of \cite{billig1995root,morita1988root} (see Corollary~\ref{proprootstring}) and prove that in the symmetrizable case a root string contains at most four real roots (see Proposition~\ref{prop32}). Finally, we establish Dynkin’s bijection in the super setting, providing a correspondence between root-generated subalgebras arising from $\pi$-systems, closed subroot systems that admit $\pi$-systems, and $\pi$-systems contained in positive root systems (see Theorem~\ref{dynbij}). In the concluding subsection, we discuss how $\pi$-systems and closed subroot systems arise naturally in the graded embedding problem and outline several applications of these embeddings in mathematical physics.

 However, the definitions and conventions for Kac–Moody superalgebras - such as the definition of the algebras themselves or of real roots - are not uniform in the literature (see, e.g., \cite{gorelik2024rootgroupoid,Serganova11KacMoody,Wa01}). In this article, we adopt the conventions of \cite{Serganova11KacMoody}. Several fundamental properties that hold for Kac–Moody algebras fail in the super setting. For instance, a real closed subroot system may fail to admit a $\pi$-system, and a root-generated subalgebra arising from a $\pi$-system need not be generated by one contained in the positive roots; we provide explicit examples along the way. This makes the study of root-generated subalgebras, and consequently the graded embedding problem, particularly interesting.
Even in the finite-dimensional case, the graded embedding problem remains far from fully understood, and only a few works address regular subalgebras in the super setting. In \cite{vander87regular}, embeddings of graded subalgebras were constructed using specific linearly independent $\pi$-systems, depending on the type. In \cite{Nayak13Embedding}, the analogue of \cite{viswanath08embedding} was established for the hyperbolic Kac–Moody superalgebra $HD(4,1)$ of rank $6$, although their definition uses only the Serre and contragredient relations, rather than the maximal ideal intersecting the Cartan subalgebra trivially.

\noindent\textit{The paper is organized as follows:} In Section~\ref{prel}, we recall basic facts about Kac–Moody superalgebras and introduce the notions used throughout the paper. In Section~\ref{secrootstring}, we study root strings, their symmetries and determine the positions of real roots. In Section~\ref{secrgs}, we analyze root-generated subalgebras arising from $\pi$-systems containing positive roots, determine their sets of real roots, and prove that Dynkin’s bijections hold in this context. Finally, we discuss the graded embedding problem in the super setting and highlight some applications of these embeddings in mathematical physics.

\textit{Acknowledgment: The authors thank R. Venkatesh for several inspiring and helpful discussions. } 
\section{Preliminaries and regular subalgebras}\label{prel}
\subsection{}
We denote by $\mathbb{N}, \mathbb{Z}_{+}$, and $\mathbb{Z}$ the set of positive integers, non-negative integers, and integers, respectively. Unless otherwise stated, all vector spaces are assumed to be complex vector spaces and a super vector space is a $\mathbb{Z}_2$-graded vector space. We denote by $p(x)$ the parity of a homogeneous element $x$ in a super vector space.
\subsection{} Let $I=\{1,\dots,n\}$ be a finite index set, $p:I\rightarrow \bz_{2}$ be a parity function, and $A=(a_{ij}),\ i,j\in I$ be a complex matrix. Fix an even vector space $\lie{h}$ of dimension $2n-\mathrm{rank}(A)$ and linearly independent vectors $\alpha_i\in\lie{h}^*$ and $h_i\in \lie{h}$ such that $\alpha_j(h_i)=a_{ij}$ for all $i,j\in I$; see \cite{Kac90Infinite} for the existence and uniqueness up to a linear transformation of $\lie h$. Define a Lie superalgebra $\tilde{\lie{g}}(A)$ by generators $x_{i}^{\pm},\ i\in I$ and $\lie h$ with relations
$$[h,h']=0,\ \ [h,x_{i}^{\pm}]=\pm \alpha_i(h)x_i^\pm,\ \ [x_i^+,x_j^-]=\delta_{ij}h_i,\ \ p(x_i^\pm)=p(i)$$ for all $h,h'\in \lie{h}$ and $i,j\in I$. 
It is known that there exists a unique maximal ideal $\boldsymbol{\tau}$ of $\tilde{\lie{g}}(A)$ which intersects $\lie{h}$ trivially (see \cite[Proposition 2.2]{vander89Classification}). We define $\lie g=\lie{g}(A)$ to be the quotient of $\tilde{\lie{g}}(A)$ by the ideal $\boldsymbol{\tau}$ and omit the dependence on $A$ when the context is clear. Moreover, we have $\lie{g}(A)\cong \lie{g}(DA)$ for any invertible diagonal matrix $D$ and thus we will assume without loss of generality that $a_{ii}\in\{0,2\}$ for all $i\in I$. Such matrices are called \textit{normalized}.

The action of $\lie h$ on $\lie{g}=\lie g_0\oplus \lie g_1$ is diagonalizable and induces a root space decomposition 
$$\lie g=\lie h \oplus \bigoplus_{\alpha\in \Delta} \lie g_{\alpha},\ \ \ \lie g_{\alpha}:=\{x\in \lie g: [h,x]=\alpha(h)x,\ \forall h\in \lie h\}$$
where $\Delta=\{\alpha\in \lie h^{*}: \lie g_{\alpha}\neq 0 \}$ is called the set of \textit{roots}. We define as usual the set of \textit{even roots} and \textit{odd roots} by 
$$\Delta_0=\{\alpha\in \Delta: \lie g_{\alpha}\subseteq \lie g_0\},\ \ \  \Delta_1=\{\alpha\in \Delta: \lie g_{\alpha}\subseteq \lie g_1\}.$$
Moreover, we denote by $\Delta^{\pm}$ the sets of \textit{positive} and \textit{negative roots}, respectively - that is, those roots which are linear combinations of the $\alpha_i$'s with non-negative and non-positive coefficients, respectively. Then we have
$$\Delta=\Delta_0\cup \Delta_1=\Delta^+\cup \Delta^-$$
and we can extend the parity function $p$ to $\Delta$ in the obvious way. 
The set of simple roots is denoted by $\Pi=\{\alpha_i:i\in I\}$. The Chevalley involution $\tilde{w}:\lie g\rightarrow \lie g$ is defined as follows
$$\tilde{w}(x_i^{+})=-(-1)^{p(i)}x_i^{-},\ \ \tilde{w}(x_i^{-})=-x_i^{+},\ \ \tilde{w}(h)=-h,\ \ h\in \lie h,\ i\in I.$$
Setting $I_1=\{i\in I: p(i)=1\}$ we have that $\tilde{w}$ has order 4 if $I_1\neq \emptyset$ and order 2 otherwise. The center $\mathfrak{c}$ of $\lie g$ is given by $\mathfrak{c}=\{h\in \mathfrak{h}: \alpha_i(h)=0\ \forall i\in I\}$ (see \cite[Section 2.5]{vander89Classification}) and denote by $\lie g'=[\lie g,\lie g]$ the derived superalgebra. We have $\mathfrak{c}\subseteq \mathfrak{g}'$ and $\mathfrak{g}=\mathfrak{g}'\oplus \lie h''$ where $\lie h=\lie h'\oplus \lie h''$ and $\lie h'$ is the span of the $h_i$'s and $\lie h''$ is a linear complement.
\begin{rem}\label{rank1}
In contrast to the situation for Lie algebras, one obtains several distinct types of rank-one subalgebras.
For each $i$, the elements $x_i^{\pm}$ and $h_i$ generate a subalgebra $\mathfrak{g}(\alpha_i)$, which is isomorphic to:
\begin{enumerate}
\item[$\Diamond$] the three-dimensional Heisenberg algebra $\mathfrak{H}_3$ if $a_{ii}=0$ and $p(i)=0$,
\item[$\Diamond$] $\mathfrak{sl}(1,1)$ if $a_{ii}=0$ and $p(i)=1$,
\item[$\Diamond$] $\mathfrak{sl}_2$ if $a_{ii}=2$ and $p(i)=0$,
\item[$\Diamond$] $\mathfrak{osp}(1,2)$ if $a_{ii}=2$ and $p(i)=1$.
\end{enumerate}
We will use this fact repeatedly throughout the remainder of the paper.
\end{rem}
We call $\alpha_i$ \textit{isotropic} if $a_{ii}=0$ and \textit{non-isotropic} otherwise and $\alpha_i$ is called \textit{regular} if $a_{ij}=0$ implies $a_{ji}=0$ for all $j\in I$; otherwise it is called \textit{singular}. The matrix $A$ is called \textit{regular} if each $\alpha_i$ is regular. The matrix $A$ is called \textit{admissible} if the operators $\mathrm{ad}_{x_i^{\pm}}$ act locally nilpotently on $\lie{g}(A)$ for all $i\in I$ and $\lie g(A)$ is called \textit{quasisimple} if any ideal $\lie{i}\subseteq \lie{g}(A)$ satisfies $\lie{i}\subseteq \lie{h}$ or $\lie{i}+\lie{h}=\lie g(A).$
The following lemma can be found in \cite[Section 2]{Serganova11KacMoody}.
\begin{lemma}\label{admatrix}
  The (normalized) matrix $A$ is admissible if and only if the following conditions are satisfied
    \begin{enumerate}
    \item $a_{ii}=0$ and $p(i)=0$ implies $a_{ij}=0$ for all $j\in I,$
    \item $a_{ii}=2$ implies $a_{ij}\in 2^{p(i)}\bz_{\le 0}$ for all $j\in I, \ j\neq i$  
    \item $a_{ii}=2$ and $a_{ij}=0\implies a_{ji}=0.$
    \end{enumerate}
    \qed
\end{lemma}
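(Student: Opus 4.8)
The plan is to convert local nilpotency of the operators $\mathrm{ad}_{x_i^{\pm}}$ into finite-dimensionality statements about cyclic modules over the rank-one subalgebras $\mathfrak g(\alpha_i)$ of Remark~\ref{rank1}, and then to extract the three conditions from the representation theory of the four possible types $\mathfrak{H}_3$, $\mathfrak{sl}(1,1)$, $\mathfrak{sl}_2$ and $\mathfrak{osp}(1,2)$. First I would record the usual reduction: each $\mathrm{ad}_{x_i^{\pm}}$ is a (super)derivation, so the set of $y\in\lie g$ on which it acts nilpotently is a subalgebra, and it suffices to test local nilpotency on the generators $x_j^{\pm}$ and on $\lie h$. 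The Cartan and opposite-sign directions are automatically harmless: $[x_i^+,x_j^-]=\delta_{ij}h_i$ kills $x_j^-$ for $j\neq i$ and confines $x_i^-$ to $\mathfrak g(\alpha_i)$, while $(\mathrm{ad}_{x_i^+})^2h=-\alpha_i(h)[x_i^+,x_i^+]$ is annihilated after at most three further steps in each of the four types. Hence the whole question reduces to the same-sign direction, namely whether $\mathrm{ad}_{x_i^+}$ is nilpotent on $x_j^+$ for $j\neq i$ (and, by applying the Chevalley involution, its mirror image for $\mathrm{ad}_{x_i^-}$).

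The central object is the cyclic module $M_{ij}=U(\mathfrak g(\alpha_i))\cdot x_j^+$. Since $[x_i^-,x_j^+]=0$ for $j\neq i$, the element $x_j^+$ is a lowest-weight vector of $h_i$-weight $\alpha_j(h_i)=a_{ij}$, so $M_{ij}$ grows only in the $+\alpha_i$ direction and $\mathrm{ad}_{x_i^+}$ is locally nilpotent on $x_j^+$ exactly when $M_{ij}$ is finite-dimensional. I would then run the four-way analysis. For $a_{ii}=0,\ p(i)=1$ (type $\mathfrak{sl}(1,1)$) one has $[x_i^+,x_i^+]=0$, whence $(\mathrm{ad}_{x_i^+})^2=\tfrac12\,\mathrm{ad}_{[x_i^+,x_i^+]}=0$ automatically and no constraint appears, which explains why the lemma imposes nothing on isotropic odd nodes. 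For $a_{ii}=0,\ p(i)=0$ (type $\mathfrak{H}_3$) the central element $h_i=[x_i^+,x_i^-]$ acts on $M_{ij}$ by the scalar $a_{ij}$, so taking traces gives $a_{ij}\dim M_{ij}=\mathrm{tr}\big(h_i|_{M_{ij}}\big)=\mathrm{tr}\big([x_i^+,x_i^-]|_{M_{ij}}\big)=0$; finite-dimensionality then forces $a_{ij}=0$, which is condition~(1). For $a_{ii}=2$ the module $M_{ij}$ is a finite-dimensional lowest-weight module over $\mathfrak{sl}_2$ or $\mathfrak{osp}(1,2)$: a finite-dimensional $\mathfrak{sl}_2$-module has non-positive integral lowest weight, giving $a_{ij}\in\bz_{\le 0}$ when $p(i)=0$, while restricting to the even $\mathfrak{sl}_2\subseteq\mathfrak{osp}(1,2)$ (with coroot $h_i/2$) the lowest weight is $a_{ij}/2\in\bz_{\le 0}$, giving $a_{ij}\in 2\bz_{\le 0}$ when $p(i)=1$. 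Uniformly this is condition~(2) with exponent $2^{p(i)}$.

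Condition~(3) I would obtain by a short closing-up argument. Assume $a_{ii}=2$ and $a_{ij}=0$. Then $M_{ij}$ is a finite-dimensional lowest-weight module of lowest weight $0$ over $\mathfrak{sl}_2$ or $\mathfrak{osp}(1,2)$, hence trivial, so $[x_i^+,x_j^+]=0$ and therefore also $[x_j^+,x_i^+]=0$. Combining this with $[x_j^-,x_i^+]=0$, the super-Jacobi identity gives $[h_j,x_i^+]=[[x_j^+,x_j^-],x_i^+]=0$; since $[h_j,x_i^+]=a_{ji}\,x_i^+$, we conclude $a_{ji}=0$, which is condition~(3). This completes the necessity of the three conditions.

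For the converse I would verify that, assuming (1)--(3), each $M_{ij}$ is finite-dimensional of the predicted length and then propagate local nilpotency from the generators to all of $\lie g$ via the subalgebra reduction of the first step. I expect this sufficiency direction to be the main obstacle, because one works in the quotient $\lie g(A)=\tilde{\lie g}(A)/\boldsymbol{\tau}$: a priori the representation theory above only predicts the behaviour in an abstract integrable module, and one must confirm that the upward string genuinely terminates in $\lie g(A)$ itself — indeed, in the $\mathfrak{H}_3$ case the module $M_{ij}$ need not be finite-dimensional in $\tilde{\lie g}(A)$ even when $a_{ij}=0$. Concretely, the strategy is to show that the Serre-type element $(\mathrm{ad}_{x_i^+})^{\,1-a_{ij}}x_j^+$ and its negative counterpart generate an ideal of $\tilde{\lie g}(A)$ meeting $\lie h$ trivially — here conditions~(1) and~(3) are precisely what force the brackets of these elements with the remaining $x_k^{\pm}$ to vanish — so that they lie in the maximal ideal $\boldsymbol{\tau}$ and hence die in $\lie g(A)$, yielding the required nilpotency.
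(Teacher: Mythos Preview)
The paper does not supply its own proof of this lemma; it is quoted from \cite[Section~2]{Serganova11KacMoody} and closed with a \qed{} box. Your proposal is the standard argument (reduce to the cyclic $\mathfrak g(\alpha_i)$-modules $M_{ij}$, read off (1)--(3) from the representation theory of the four rank-one types, and for the converse show that suitable Serre-type elements lie in $\boldsymbol\tau$) and is essentially correct.

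One small correction concerns the sufficiency direction in the Heisenberg case $a_{ii}=0,\ p(i)=0$. Your exponent $1-a_{ij}=1$ is too small: with $x_i^+$ even one computes $[x_j^-,[x_i^+,x_j^+]]=\pm a_{ji}\,x_i^+$, and conditions (1)--(3) do \emph{not} force $a_{ji}=0$, so the ideal generated by $[x_i^+,x_j^+]$ can hit $\lie h$. The fix is immediate: since $[x_i^+,x_i^+]=0$, the same computation gives $[x_j^-,(\mathrm{ad}_{x_i^+})^{2}x_j^+]=\pm a_{ji}[x_i^+,x_i^+]=0$, and one checks $[x_i^-,(\mathrm{ad}_{x_i^+})^{2}x_j^+]=0$ as well (here $a_{ii}=a_{ij}=0$ are used). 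Hence $(\mathrm{ad}_{x_i^+})^{2}x_j^+$ is annihilated by every $x_k^-$ and your ideal argument goes through with exponent $2$ in place of $1$.
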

We say that a matrix $A$ \textit{symmetrizable} if there exists an invertible diagonal matrix $D$ such that $DA=B$ is a symmetric matrix. It is known that a matrix $A$ is symmetrizable if and only if $\lie g(A)$ admits a non-degenerate even invariant bilinear form $(\cdot,\cdot)$ (see \cite[Proposition 4.2]{vander89Classification}) inducing an isomorphism $\nu: \lie h\rightarrow \lie h^{*}$. 
\subsection{} A linearly independent subset $\Sigma\subseteq \Delta$ is called a \textit{base} of $\lie g$ if for each $\alpha\in\Sigma$ we can find $x_\alpha^{\pm}\in \lie g_{\pm \alpha}$ such that $x_{\alpha}^\pm,\alpha\in\Sigma$ and $\lie h$ generate $\lie g$ and $[x_\alpha^+,x_\beta^-]=0$ for all $\alpha,\beta\in \Sigma$ with $\alpha\neq \beta$.
Setting $h_{\alpha}:=[x_{\alpha}^+,x_{\alpha}^-],$ for $\alpha\in \Sigma$, we have 
$$[h,x_{\alpha}^\pm]=\pm \alpha(h)x_\alpha^\pm,\ \ [x_\alpha^+,x_\beta^-]=\delta_{\alpha \beta} h_\beta,\ \ h\in\lie{h}.$$
For example, $\Pi$ is a base called the \textit{standard base}.  We denote by $\lie g(\alpha)$, for $\alpha\in \Sigma$, the subalgebra generated by $x_{\alpha}^{\pm}$. The set $\{h_\alpha:\alpha\in \Sigma\}$ is linearly independent and $|\Sigma|=n$. 
For a base $\Sigma$ we define its \textit{Cartan matrix} by $A_\Sigma=(a_{\alpha\beta}), \alpha,\beta\in\Sigma$ where $a_{\alpha\beta}=\beta(h_\alpha)$. Then we have an isomorphism $\lie g(A_{\Sigma})\xrightarrow[]{\sim} \lie g(A)$; the existence of a surjective homomorphism follows from the definition of a base and the injectivity follows by proving that both have the same Cartan subalgebra $\lie h$.

Given $\alpha\in\Sigma$ with $a_{\alpha\alpha}=0$ and $p(\alpha)=1$ we define $s_\alpha(\beta)$ for $\beta\in \Sigma$ by
$$s_\alpha(\beta)=\begin{cases}
    (-1)^{\delta_{\alpha,\beta}}\beta & \text{ if } a_{\alpha\beta}=a_{\beta\alpha}=0\\
    \beta+\alpha & \text{ if }a_{\alpha\beta}\neq 0 \text{ or }a_{\beta\alpha}\neq 0
\end{cases}$$
and refer to $s_{\alpha}$ ($\alpha$ satisfying the above conditions) as an \textit{odd reflection} on $\Sigma$. 
Define for such a reflection and $\beta\in \Sigma$:
$$x_{s_{\alpha}(\beta)}^{\pm}=x_\alpha^{\mp}, \ \text{if $s_{\alpha}(\beta)=-\alpha$},\ \ x_{s_{\alpha}(\beta)}^{\pm}=[x_\alpha^{\pm},x_\beta^{\pm}],\ \ \text{if $s_{\alpha}(\beta)=\alpha+\beta$}.$$ Note that $x_{s_{\alpha}(\beta)}^{\pm}\neq 0$ can be derived from $a_{\alpha\beta}\neq 0$ or $a_{\beta\alpha}\neq 0$. Then we have for the coroots
$$h_{s_\alpha(\beta)}=[x_{s_\alpha(\beta)}^{+},x_{s_\alpha(\beta)}^{-}]=\begin{cases}
    h_\beta & \text{ if }a_{\alpha\beta}=a_{\beta\alpha}=0,\\ \vspace{5pt}
   (-1)^{p(\beta)}( a_{\alpha\beta}h_\beta+a_{\beta\alpha}h_\alpha) & \text{ if }a_{\alpha\beta}\neq 0 \text{ or }a_{\beta\alpha}\neq 0
\end{cases}$$
A root $\alpha$ in some base $\Sigma$ is called \textit{regular} with respect to $\Sigma$ if $a_{\alpha\beta}=0\implies a_{\beta\alpha}=0$ for all $\beta\in\Sigma.$ Note that this definition extends the previous one for the standard base and depends on the choice of base. Moreover, a root $\alpha$ is called \textit{isotropic} (resp. \textit{non-isotropic}) if $r\alpha$ belongs to some base $\Sigma$ for $r\in \{1,1/2\}$ and $\alpha(h_{r\alpha})= 0$ (resp. $\alpha(h_{r\alpha})\neq 0$). Since $\dim \lie{g}_{\pm r\alpha}=1,$ the definition is independent of the choice of the base.
Given a base $\Sigma$ and a regular odd isotropic root $\alpha\in \Sigma$, it turns out that $s_\alpha(\Sigma)$ is again a base (see \cite[Lemma 3.1]{Serganova11KacMoody}). In this case we say that $s_\alpha(\Sigma)$ is obtained from $\Sigma$ by an odd reflection.
\begin{lemma}\label{basereg}
   Let $\Sigma$ be a base and $\alpha\in \Sigma$ be odd isotropic and regular with respect to $\Sigma$. Then, $s_\alpha(\Sigma)$ is a base. \qed
\end{lemma}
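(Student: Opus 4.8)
The plan is to verify the three defining conditions of a base directly for $\Sigma':=s_\alpha(\Sigma)$, using the root vectors $x_\gamma^\pm\in\lie g_{\pm\gamma}$ for $\gamma\in\Sigma'$ introduced just before the statement (with the understanding that $x_{s_\alpha(\beta)}^\pm=x_\beta^\pm$ in the unreflected case $s_\alpha(\beta)=\beta$). First I would record the two easy points. Each element of $\Sigma'$ is a root: $-\alpha$ and the unchanged $\beta$ lie in $\Delta$ trivially, while $\alpha+\beta\in\Delta$ precisely because $x_{s_\alpha(\beta)}^\pm=[x_\alpha^\pm,x_\beta^\pm]\neq0$, as already observed. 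For linear independence I would extend $s_\alpha$ to a linear endomorphism $T$ of $\mathrm{span}_{\bc}\Sigma$; ordering $\Sigma$ so that $\alpha$ comes first, the matrix of $T$ is triangular with diagonal entries $-1$ on $\alpha$ and $1$ on the remaining roots, so $\det T=-1\neq0$ and $T$ maps the basis $\Sigma$ bijectively onto the linearly independent set $\Sigma'$ of cardinality $n$.

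The technical core is the vanishing $[x_\gamma^+,x_\delta^-]=0$ for distinct $\gamma,\delta\in\Sigma'$, which I would settle case by case, using regularity of $\alpha$ to ensure that the unreflected case occurs exactly when $a_{\alpha\beta}=a_{\beta\alpha}=0$. If $\gamma,\delta$ are both unchanged this is the base relation $[x_\beta^+,x_{\beta'}^-]=0$ for $\Sigma$. If both are reflected, say $\gamma=\alpha+\beta$ and $\delta=\alpha+\beta'$ with $\beta\neq\beta'$, then $[x_\gamma^+,x_\delta^-]\in\lie g_{\beta-\beta'}=0$, since $\beta-\beta'$, a difference of two distinct base elements, is not a root. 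The pairs $\{-\alpha,\alpha+\beta\}$ collapse after one application of the super-Jacobi identity to $(\mathrm{ad}\,x_\alpha^\pm)^2x_\beta^\pm$, which vanishes because $\alpha$ is odd and isotropic, so $[x_\alpha^\pm,x_\alpha^\pm]=0$ and hence $(\mathrm{ad}\,x_\alpha^\pm)^2=\tfrac12\,\mathrm{ad}[x_\alpha^\pm,x_\alpha^\pm]=0$. The mixed pairs with one reflected and one unchanged root reduce, again by super-Jacobi, to the base relations $[x_\gamma^+,x_\delta^-]=\delta_{\gamma\delta}h_\delta$ for $\Sigma$, all of whose relevant instances vanish because the indices occurring are distinct.

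For the generation property I would show that the new generators recover the old ones. Directly, $x_\alpha^\pm=x_{-\alpha}^\mp$, and every unchanged $x_\beta^\pm$ already belongs to the new system. For a reflected $\beta$, regularity forces $a_{\alpha\beta}\neq0$, and a single super-Jacobi computation gives
$$[x_{-\alpha}^+,x_{\alpha+\beta}^+]=[x_\alpha^-,[x_\alpha^+,x_\beta^+]]=a_{\alpha\beta}\,x_\beta^+,$$
using $[x_\alpha^-,x_\beta^+]=0$ and $[x_\alpha^-,x_\alpha^+]=h_\alpha$; thus $x_\beta^+$ is recovered up to the nonzero scalar $a_{\alpha\beta}$, and symmetrically $x_\beta^-$. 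Consequently the $\Sigma'$-generators together with $\lie h$ generate all of the $\Sigma$-generators together with $\lie h$, which generate $\lie g$ because $\Sigma$ is a base.

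The main obstacle, and the only place where a genuinely global argument is needed, is the claim used above that $[x_\alpha^\pm,x_\beta^\pm]=0$ whenever $a_{\alpha\beta}=a_{\beta\alpha}=0$, equivalently that $\alpha+\beta\notin\Delta$. I would prove this by lifting to $\tilde{\lie g}(A)$: the super-Jacobi identity together with the Cartan relations $[\tilde x_\alpha^+,\tilde x_\gamma^-]=\delta_{\alpha\gamma}h_\alpha$ shows that $[\tilde x_\alpha^+,\tilde x_\beta^+]$ is annihilated by every $\tilde x_\gamma^-$, so the ideal it generates lies in the positive part and meets $\lie h$ trivially; by maximality it is contained in $\boldsymbol{\tau}$, whence the bracket vanishes in $\lie g=\tilde{\lie g}(A)/\boldsymbol{\tau}$. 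The remaining care throughout is the bookkeeping of super signs in each super-Jacobi expansion.
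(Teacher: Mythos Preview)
Your argument is correct. The paper itself does not prove this lemma but records it with a \qed and attributes it to \cite[Lemma~3.1]{Serganova11KacMoody}; your direct verification of the base axioms for $s_\alpha(\Sigma)$, including the treatment of the ``main obstacle'' via the maximal ideal $\boldsymbol{\tau}$, is essentially the standard argument one finds in that reference.
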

The following definition is taken from \cite[Section 3]{Serganova11KacMoody}.
\begin{definition}
We call $\mathfrak{g}(A)$ a \emph{regular Kac–Moody superalgebra} if, for every base $\Sigma$ obtained from the standard base $\Pi$ by a sequence of odd reflections, the corresponding matrix $A_{\Sigma}$ is regular and admissible.
\end{definition}
\textit{In the rest of this paper, we assume that $\lie g(A)$ is a quasisimple regular Kac-Moody superalgebra. In particular, $A$ is non-zero, indecomposable and admissible.} 
\subsection{}An even root $\alpha\in\Delta_0$ is called \textit{principal} if there exists a base $\Sigma$ obtained from the standard base $\Pi$ by odd reflections such that $\alpha\in \Sigma$ or $\alpha/2\in\Sigma.$ We denote by $\Delta_{\mathrm{pr}}$ the set of principal roots of $\mathfrak{g}$. This set satisfies the properties of a $\pi$-system, to be defined later. By \cite[Section 9]{Serganova11KacMoody}, the set of principal roots is finite. The main result of this paper provides a description of the real roots of the Lie subsuperalgebra generated by the principal roots. It is clear that principal roots are positive and since $\lie g(A)$ is quasisimple, all principal roots are also non-isotropic. Therefore, each $\alpha\in \Delta_{\mathrm{pr}}$ gives a standard $\mathfrak{sl}_2$ triple $\{x_{\alpha}^{\pm}\in \lie g_{\pm \alpha}, h_{\alpha}\in \lie h\}$ and we define the \textit{even reflection} corresponding to $\alpha$ by
$$s_\alpha:\lie{h}^*\to \lie{h}^*,\ \ \lambda\mapsto \lambda-\lambda(h_\alpha)\alpha.$$ 
The even reflections permute the roots and map bases to bases. We define the \textit{Weyl group} $W$ of $\lie g$ to be the group generated by all even reflections. The group $W$ also acts on $\lie h$ by the dual action given by $$s_\alpha(h)=h-\alpha(h)h_\alpha,\ \ \ h\in \lie{h}.$$ By \cite[Lemma 3.8]{Kac90Infinite} for $\alpha\in\Delta_{\mathrm{pr}},$ there exists an automorphism $\tau_\alpha$ of $\lie{g}$ such that $$\tau_\alpha|_{\lie{h}}=s_\alpha,\ \ \tau_\alpha(\lie g_\beta)=\lie g_{s_\alpha(\beta)},\ \ \beta\in\Delta.$$
For an odd reflection $s_{\alpha}$ with $\alpha\in \Sigma$ in some base $\Sigma$, we have
\begin{equation}\label{11a}ws_{\alpha}(\Sigma)=s_{w(\alpha)}w(\Sigma),\ \ w\in W\end{equation}
Moreover, for any even or odd reflection $s_{\alpha}$ with $\alpha\in \Sigma$ in some base $\Sigma$, we have
$$\Delta^+(\Sigma)\backslash\{\alpha,2\alpha\}=\Delta^+(s_{\alpha}(\Sigma))\backslash\{-\alpha,-2\alpha\}$$

 A root $\alpha$ is called \textit{real} if there exists a base $\Sigma$ obtained from the standard base $\Pi$ by using even and odd reflections such that $\alpha\in \Sigma$ or $\alpha/2\in\Sigma.$ A root which is not real is called \textit{imaginary}. We denote by $\Delta^{\mathrm{re}}$ (resp. $\Delta^{\mathrm{im}}$) the set of real (resp. imaginary) roots of $\lie g.$  It is also not hard to show with \eqref{11a} that the Weyl group is also generated by all $s_{\alpha}$ with $\alpha\in \Delta^{\mathrm{re}}$ and non-isotropic. 
 \begin{rem}\label{triple}We record the following facts for later use.
\begin{enumerate}
\item We will frequently use \cite[Lemma 4.11]{Serganova11KacMoody} which we summarize below
\begin{itemize}
    \item If $\alpha$ is real odd isotropic root and $k\alpha\in\Delta$, then $k\in\{\pm 1\}$.
    \item If $\alpha$ is real odd non-isotropic root and $k\alpha\in\Delta$, then $k\in\{\pm 1, \pm 2\}$.
    \item If $\alpha$ is real even root and $k\alpha\in\Delta$, then $k\in\{\pm 1, \pm \frac{1}{2}\}$.
\end{itemize}
    \item As for roots in a base, we can choose for any $\alpha\in \Delta^{\mathrm{re}}$ suitable $x_{\alpha}^{\pm}\in \lie g_{\pm \alpha}$ such that $\alpha(h_{\alpha})\in\{0,2\}$ where $h_{\alpha}=[x_{\alpha}^{+},x_{\alpha}^{-}]$. If $\alpha\in \Sigma$ (recall the definition of real roots) then this follows from the definition of a base after a suitable scaling and if $\alpha/2\in\Sigma$, we must have that $\alpha$ is an even non-isotropic real root and hence we are done with Remark~\ref{rank1}. \textit{We will make this choice for the rest of the paper.} In particular, for any real odd non-isotropic root $\alpha$, we must have $h_{2\alpha}=\frac{1}{2}h_\alpha$. If $\lie g$ is symmetrizable we have $\nu(h_{\alpha})=\frac{2\alpha}{(\alpha,\alpha)}$ for all real non-isotropic roots $\alpha$ and for all real isotropic roots $\alpha$ we have that $\nu(h_{\alpha})$ is proportional to $\alpha$.
    \item Let $\alpha$ be a real root, then $-\alpha$ is also real. This follows from Lemma~\ref{basereg} and the regularity of $\lie g$.
\end{enumerate}
 \end{rem}
  We now discuss examples of Kac-Moody superalgebras that are closely related to Kac-Moody algebras.
 \begin{example}\label{examnoniso}
     Assume that $\mathfrak g$ has no isotropic real roots. Then we have $$\Delta_{\mathrm{pr}}=\{\alpha_i:i\in I\backslash I_1\}\cup\{2\alpha_i:i\in I_1\}.$$ 
     Let $B$ be the generalized Cartan matrix associated to $\Delta_{\mathrm{pr}}$. The Weyl group $W$ of $\mathfrak g$ is isomorphic to the Weyl group of the Kac–Moody algebra $\mathfrak g(B)$. In particular, $W$ is a Coxeter group and
     $\Delta^{\mathrm{re}}=W(\Pi\cup\{2\alpha_i:i\in I_1\}).$ We shall freely use the Coxeter structure of $W$.
 \end{example}
We collect a useful property of imaginary roots in the rest of this subsection. For a base $\Sigma$ consider the convex cone $C_\Sigma^+:=\bz_{+}\Sigma$ and let 
 $$C(\Pi)=\bigcap_{\Sigma}C_\Sigma^+,\ \ \ \  D(\Pi)=\bigcap_{\Sigma}C_\Sigma^+$$ where the first (resp. second) intersection runs over all bases $\Sigma$ obtained from $\Pi$ by applying odd (resp. even and odd) reflections. We obviously have $\Delta_{\mathrm{pr}}\subseteq C(\Pi)$ and  
 from \eqref{11a} we also obtain $$D(\Pi)=\bigcap_{w\in W}wC(\Pi).$$
 The first part of the following result is proven in \cite[Corollary 4.12]{Serganova11KacMoody} and the proof of the second part is straightforward. We give the details for completeness.
 \begin{lemma}\label{lemimroots}
 \begin{enumerate}
     \item If $\alpha\in \Delta^{\mathrm{im}},$ then either $\alpha$ or $-\alpha$ belongs to $D(\Pi).$
     \item Every even real root is non-isotropic.
 \end{enumerate}
    \begin{proof}
    Let $\alpha$ be an even real root. Then there exists a base $\Sigma$, obtained from the standard base by using even and odd reflections, such that either $\alpha/2$ or $\alpha$ is in $\Sigma.$ 
    In the former case, the root $\alpha/2$ is odd non-isotropic (see Remark~\ref{rank1}) and therefore $\alpha$ is non-isotropic. In the latter case, since $\lie g(A)\cong \lie g(A_{\Sigma})$ is quasisimple and $A_{\Sigma}$ is admissible, we see with \cite[Lemma 2.3]{Serganova11KacMoody} that $\alpha(h_\alpha)=0$ is only possible if $\alpha$ is odd.
\end{proof}
 \end{lemma}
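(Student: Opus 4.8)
The plan is to handle the two parts separately, since they are of quite different natures. Part~(1) is the super-analogue of the classical description of the imaginary cone, and I would simply invoke \cite[Corollary 4.12]{Serganova11KacMoody}: for regular Kac--Moody superalgebras that result characterizes $\alpha\in\Delta^{\mathrm{im}}$ as precisely those roots for which $\alpha$ or $-\alpha$ stays a nonnegative integral combination of simple roots after every sequence of even and odd reflections, which is exactly membership in $D(\Pi)=\bigcap_{w\in W}wC(\Pi)$. So the content I would actually write out is part~(2). For this, let $\alpha$ be an even real root; by the definition of a real root there is a base $\Sigma$ reached from $\Pi$ by even and odd reflections with $\alpha\in\Sigma$ or $\alpha/2\in\Sigma$, and I would split along these two cases.

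In the case $\alpha/2\in\Sigma$, the key point is that $\alpha=2(\alpha/2)$ is a root and $\alpha/2$, being a base element, is real. The constraint on multiples of real roots in Remark~\ref{triple}(1) then forces $\alpha/2$ to be odd non-isotropic: for a real even root $\beta$ one has $k\beta\in\Delta$ only for $k\in\{\pm1,\pm\tfrac12\}$ and for a real odd isotropic $\beta$ only for $k\in\{\pm1\}$, so in either of those cases $2\beta\notin\Delta$; this matches the rank-one classification of Remark~\ref{rank1}, where the double of a base element is again a root only in type $\mathfrak{osp}(1,2)$. Hence $\alpha/2$ is odd non-isotropic, so $(\alpha/2)(h_{\alpha/2})\neq0$, and therefore $\alpha(h_{\alpha/2})=2(\alpha/2)(h_{\alpha/2})\neq0$, which is precisely non-isotropy of $\alpha$.

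In the case $\alpha\in\Sigma$, I would pass to the isomorphic presentation $\lie g(A)\cong\lie g(A_\Sigma)$. Here isotropy of $\alpha$ means exactly $\alpha(h_\alpha)=a_{\alpha\alpha}=0$ in $A_\Sigma$; if this held with $\alpha$ even, then admissibility (Lemma~\ref{admatrix}(1)) would give $a_{\alpha\beta}=0$ for all $\beta$, disconnecting the node $\alpha$ and contradicting indecomposability and hence quasisimplicity. This is the content of \cite[Lemma 2.3]{Serganova11KacMoody}, so $\alpha$ is non-isotropic. The step I expect to be the main subtlety is the bookkeeping hidden in this last case: one must ensure that quasisimplicity transfers along the reflection isomorphism and that $A_\Sigma$ is genuinely admissible even though $\Sigma$ is reached by even \emph{and} odd reflections, whereas the regularity hypothesis is phrased only for odd reflections. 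I would resolve this by using \eqref{11a} to rewrite $\Sigma=w(\Sigma'')$ with $w\in W$ and $\Sigma''$ obtained from $\Pi$ by odd reflections alone, and then noting that even reflections preserve the Cartan matrix (they are realized by the automorphisms $\tau_\beta$), so $A_\Sigma=A_{\Sigma''}$ is regular and admissible. Granting this reduction, both cases yield that $\alpha$ is non-isotropic, completing part~(2).
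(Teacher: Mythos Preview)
Your proposal is correct and follows essentially the same approach as the paper: invoke \cite[Corollary 4.12]{Serganova11KacMoody} for part~(1), and for part~(2) split into the two cases $\alpha/2\in\Sigma$ versus $\alpha\in\Sigma$, handling the first via the rank-one classification and the second via quasisimplicity plus admissibility through \cite[Lemma 2.3]{Serganova11KacMoody}. Your explicit bookkeeping in the second case---rewriting $\Sigma=w(\Sigma'')$ via \eqref{11a} so that $A_\Sigma=A_{\Sigma''}$ is admissible---is a useful clarification that the paper leaves implicit, but it does not constitute a different method.
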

\subsection{}\label{secbasicdef}
We end this section with a few more definitions. Given a subset $X\subseteq \Delta^{\mathrm{re}}$ and an isotropic root $\alpha\in X$ (which is automatically odd by Lemma~\ref{lemimroots}), we define for $\beta\in X$
\begin{equation}\label{defoddref}s_\alpha(\beta)=\begin{cases}
    -\beta &\ \ \text{ if }\beta=\pm\alpha,\\
    \beta+\alpha &\ \ \text{ if } \beta\neq \pm\alpha \text{ and } \alpha+\beta \text{ is a real root,}\\
    \beta &\ \ \text{ otherwise }
\end{cases}\end{equation}
and call $s_{\alpha}$ the \textit{odd reflection} on $X$.
\begin{rem}
    Note that the above definition extends the definition of odd reflections introduced before when we have the situation $X=\Sigma\subseteq \Delta^{\mathrm{re}}$ for a base $\Sigma$ and $\alpha\in\Sigma$ being a regular (with respect to $\Sigma$) isotropic root.  This can be seen from the following  observations. Let $\beta\in \Sigma$. First of all we have 
    $$a_{\alpha\beta}\neq 0 \iff \alpha+\beta \text{ is a real root }\iff \alpha+\beta \text{ is a root}$$
    where the forward directions are straightforward. Now, if $\alpha+\beta$ is a root and $a_{\alpha\beta}=0$, we would also have  $a_{\beta\alpha}=0$ since $\alpha$ is regular. Hence, the ideal generated by $[x^+_\alpha,x^+_\beta]$ is a non-zero proper ideal intersecting the Cartan subalgebra trivially. This is a contradiction.
\end{rem}
We recall the definition of regular subalgebras and some combinatorial subsets associated to them.
\begin{definition}\begin{enumerate}
    \item  A $\lie h$-invariant Lie subsuperalgebra of $\lie{g}$ is called a \textit{regular subalgebra}. Given a regular subalgebra $\lie{s}$, we define
$$\Delta(\lie{s}):=\{\alpha\in\Delta:(\lie{g}_{\alpha}\cap \lie s)\neq 0\},\ \ \lie{s}_{\alpha}:=\lie{g}_\alpha\cap \lie{s},\ \alpha\in\Delta(\lie s)$$ to be the set of roots of $\lie s$ with respect to $\lie h$. Furthermore, we set 
$$\Delta(\lie{s})^{\mathrm{re}}=\Delta^{\mathrm{re}}\cap \Delta(\lie{s}),\ \ \Delta(\lie{s})^{\mathrm{im}}=\Delta^{\mathrm{im}}\cap \Delta(\lie{s}),$$ 
$$\Delta(\lie{s})^{\mathrm{re},+}=\Delta(\lie{s})^{\mathrm{re}}\cap \Delta^+,\ \ \Delta(\lie{s})^{\mathrm{im},+}=\Delta(\lie{s})^{\mathrm{im}}\cap \Delta^+.$$
\item A regular subalgebra $\lie s$ is called a \textit{root generated subalgebra} if there exists a subset $S\subseteq \Delta^{\mathrm{re}}$ such that $\lie s=\lie g(S)$, where $\lie g(S)$ is defined to be the Lie subsuperalgebra generated by all root spaces $\lie g_{\alpha}$, $\alpha\in \pm S$.
\end{enumerate}
\end{definition} 
Note that a regular subalgebra admits the following decomposition
$$\lie{s}=(\lie h\cap \lie s)\oplus \bigoplus_{\alpha\in \Delta(\lie s)}\lie{s}_\alpha.$$ 

A non-empty subset $\Psi\subseteq \Delta^{\mathrm{re}}$ is called 
\begin{itemize}
\item \textit{ symmetric} if $\Psi=-\Psi$.\vspace{0,2cm}

    \item \textit{ a subroot system} if $s_{\alpha}(\beta)\in \Psi$ for all $\alpha,\beta\in \Psi$.\vspace{0,2cm}
       
    \item \textit{ real closed} if $\alpha,\beta\in \Psi$ and $\alpha+\beta\in \Delta^{\mathrm{re}}$ implies $\alpha+\beta\in \Psi.$\vspace{0,2cm}
\end{itemize}
For simplicity, we will write closed instead of real closed from now on. Note that every subroot system is symmetric, and by Remark~\ref{triple}, the set $\Delta^{\mathrm{re}}$ is a closed subroot system.

\subsection{} The following lemma can be found in \cite[Lemma 1.94]{Wa01}. 

\begin{lemma}\label{osp}
   The finite-dimensional irreducible $\lie{osp}(1,2)$-modules are up to isomorphism in bijective correspondence with $\bz_+$. To be more precises, for each $k\in\bz_+,$ the associated irreducible module has a basis $\{v_j: 0\le j\le 2k\}$ with action 
    $$v_j=f^jv_0,\ \ \ hv_j=(2k-2j)v_j$$
    $$ev_{j}=\begin{cases}
        -jv_{j-1} &\text{ if $j$ is even},\\
        (2k+1-j)v_{j-1} &\text{ if $j$ is odd}
    \end{cases}$$
   where $\{e,f\}$ is a basis of $\lie{osp}(1,2)_1$ and $h=[e,f].$
\qed
\end{lemma}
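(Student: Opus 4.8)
The plan is to mirror the highest-weight analysis of $\lie{sl}_2$, while carefully tracking the parity-dependence that is characteristic of the super setting. Let $V$ be a finite-dimensional irreducible $\lie{osp}(1,2)$-module with $e,f$ the odd generators and $h=[e,f]$, so that $[h,e]=2e$, $[h,f]=-2f$, and the odd-odd bracket reads $ef+fe=h$ as operators on $V$. First I would restrict $V$ to the even subalgebra $\lie{osp}(1,2)_0\cong\lie{sl}_2$, spanned (up to scalars) by $e^2,f^2,h$. By complete reducibility of finite-dimensional $\lie{sl}_2$-modules the element $h$ acts semisimply, and since $e$ raises the $h$-eigenvalue by $2$ there is a nonzero weight vector $v_0$ with $ev_0=0$, say $hv_0=\mu v_0$. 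Irreducibility together with the PBW theorem then shows that $V$ is spanned by the vectors $v_j:=f^jv_0$.

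Next I would compute the actions on the $v_j$ inductively. A short commutation argument gives $hf^j=f^j(h-2j)$, hence $hv_j=(\mu-2j)v_j$. Writing $ev_j=c_jv_{j-1}$ and using $ef=h-fe$ yields, for $j\ge 1$, the recursion
\[
c_j=(\mu-2j+2)-c_{j-1},\qquad c_1=\mu,
\]
whose solution splits according to the parity of $j$: one finds $c_j=-j$ for $j$ even and $c_j=\mu-(j-1)$ for $j$ odd. This parity split is exactly the phenomenon distinguishing $\lie{osp}(1,2)$ from $\lie{sl}_2$, and it reproduces the stated $e$-action once $\mu=2k$. To pin down $\mu$, let $N$ be maximal with $v_N\neq 0$, so $v_{N+1}=0$ and therefore $c_{N+1}=0$. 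If $N+1$ were even we would get $c_{N+1}=-(N+1)\neq 0$, a contradiction; hence $N$ is even and $c_{N+1}=\mu-N=0$, forcing $\mu=N=2k$ with $k=N/2\in\bz_+$. Thus $V$ has basis $v_0,\dots,v_{2k}$, highest weight $2k$ and dimension $2k+1$, which shows every finite-dimensional irreducible module has the claimed form and that distinct $k$ yield non-isomorphic modules.

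For the converse I would fix $k\in\bz_+$, define $V_k$ on the basis $v_0,\dots,v_{2k}$ by the displayed formulas (with the convention $v_{-1}=v_{2k+1}=0$), and verify that this is a genuine representation by checking the three defining brackets on each $v_j$. The relations $[h,e]=2e$ and $[h,f]=-2f$ are immediate, and the only delicate check is $ef+fe=h$: here $ev_{j+1}=c_{j+1}v_j$ and $f(ev_j)=c_jv_j$ sum to $(c_{j+1}+c_j)v_j=(2k-2j)v_j=hv_j$ by the recursion, in both parities. Irreducibility of $V_k$ then follows since any nonzero submodule is $h$-invariant, hence contains some $v_j$, and because $c_j\neq 0$ for $1\le j\le 2k$ while $fv_j=v_{j+1}\neq 0$ for $j<2k$, the operators $e$ and $f$ connect all basis vectors. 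I expect the main obstacle to be the super-sign bookkeeping: one must use that the odd-odd relation is the anticommutator $ef+fe=h$ rather than a commutator, since it is precisely this sign that produces the parity-dependent eigenvalues and the termination condition forcing $N$ to be even; a naive transcription of the $\lie{sl}_2$ argument would give the wrong module. That the action descends from $\tilde{\lie g}(A)$ to $\lie g(A)$ is immediate in this rank-one non-isotropic case, where the two coincide.
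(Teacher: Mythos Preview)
Your argument is correct: the highest-weight analysis, the recursion $c_j=(\mu-2j+2)-c_{j-1}$ with its parity-dependent solution, the termination forcing $N$ even and $\mu=N=2k$, and the verification that the displayed formulas define an irreducible module all go through as you indicate. The only step worth making more explicit is that the nonzero vectors $v_0,\dots,v_N$ are linearly independent because they have pairwise distinct $h$-eigenvalues, so they really form a basis; everything else is clean.

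As for comparison with the paper: there is nothing to compare. The paper does not prove this lemma at all---it is stated with a terminal \qed\ and attributed to \cite[Lemma~1.94]{Wa01}. Your write-up therefore supplies a self-contained proof where the paper is content to cite one. The approach you take (restrict to the even $\lie{sl}_2$ to get semisimplicity of $h$, pick a highest-weight vector, run the anticommutator recursion) is the standard one and is essentially what one finds in the reference, so no alternative strategy is being missed.
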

The next proposition proves important properties of $\Delta(\lie s)^{\mathrm{re}}$.

\begin{prop}\label{basiclem}
Let $\lie{s}$ be a regular subalgebra of $\lie{g}$ and $\alpha,\beta\in\Delta(\lie{s})^{\mathrm{re}}.$ 
\begin{enumerate}
    \item If $\alpha$ or $\beta$ is non-isotropic, then $[\lie{s}_\alpha,\lie{s}_\beta]\neq 0$ if and only if $\alpha+\beta\in\Delta.$ \vspace{0,1cm}
    \item If both $\alpha$ and $\beta$ are isotropic such that $(\alpha+\beta)\in\Delta^{\mathrm{re}}$ and
    \begin{equation}\label{isocond12}h_{\alpha+\beta}\in \mathrm{span}_{\mathbb{C}}\{h_{\alpha},h_{\beta}\},\end{equation} then $[\lie{s}_\alpha,\lie{s}_\beta]\neq 0.$\vspace{0,1cm}
    \item If $\alpha$ is non-isotropic such that $-\alpha\in \Delta(\lie{s})$, then we have $s_\alpha(\beta)\in\Delta(\lie s)^{\mathrm{re}}.$ 
\end{enumerate}
\end{prop}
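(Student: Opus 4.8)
The plan is to treat the three parts separately, using in each case the one-dimensionality of real root spaces together with the rank-one representation theory of Remark~\ref{rank1} and Lemma~\ref{osp}. For part (1), the forward implication is immediate, since $[\lie s_\alpha,\lie s_\beta]\subseteq[\lie g_\alpha,\lie g_\beta]\subseteq\lie g_{\alpha+\beta}$, so $[\lie s_\alpha,\lie s_\beta]\neq0$ forces $\lie g_{\alpha+\beta}\neq0$. For the converse, because $\alpha,\beta\in\Delta^{\mathrm{re}}$ their root spaces are one-dimensional, whence $\lie s_\alpha=\lie g_\alpha$ and $\lie s_\beta=\lie g_\beta$, so the claim reduces to the purely Lie-theoretic statement $[\lie g_\alpha,\lie g_\beta]\neq0$. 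Assuming (after possibly interchanging $\alpha$ and $\beta$) that $\alpha$ is non-isotropic, I would consider the $\lie g(\alpha)$-module $M=\bigoplus_{k}\lie g_{\beta+k\alpha}$, which is finite-dimensional by admissibility. Since $\alpha$ is non-isotropic real, $\lie g(\alpha)$ is of type $\mathfrak{sl}_2$ or $\mathfrak{osp}(1,2)$ (Remark~\ref{rank1}), and $\operatorname{ad}(\lie g_\alpha)$ acts on $M$ as the raising operator, sending the $h_\alpha$-weight space of eigenvalue $m:=\beta(h_\alpha)$ to that of eigenvalue $m+2$. If $\operatorname{ad}(\lie g_\alpha)$ annihilated $\lie g_\beta$, the spanning vector of the one-dimensional space $\lie g_\beta$ would be a highest weight vector; since in finite-dimensional $\mathfrak{sl}_2$- and $\mathfrak{osp}(1,2)$-modules highest weights are non-negative and every weight between the top and its negative occurs with multiplicity one (Lemma~\ref{osp}), the nonzero space $\lie g_{\beta+\alpha}$ of eigenvalue $m+2$ would force a second occurrence of eigenvalue $m$, contradicting $\dim\lie g_\beta=1$. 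Hence $[\lie g_\alpha,\lie g_\beta]\neq0$.

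For part (2), I would again use $\lie s_\alpha=\lie g_\alpha=\mathbb{C}x_\alpha^+$, $\lie s_\beta=\lie g_\beta=\mathbb{C}x_\beta^+$ and aim to show $[x_\alpha^+,x_\beta^+]\neq0$. Since $\alpha+\beta$ is even it is non-isotropic real (Lemma~\ref{lemimroots}), so part (1) applies to the non-isotropic root $\alpha+\beta$ and the isotropic roots $-\alpha,-\beta$: as $(\alpha+\beta)+(-\beta)=\alpha$ and $(\alpha+\beta)+(-\alpha)=\beta$ are roots, we obtain $[x_{\alpha+\beta}^+,x_\alpha^-]=\lambda x_\beta^+$ and $[x_{\alpha+\beta}^+,x_\beta^-]=\mu x_\alpha^+$ with $\lambda,\mu\neq0$. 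Writing $[x_\alpha^+,x_\beta^+]=\kappa\,x_{\alpha+\beta}^+$ (using $\dim\lie g_{\alpha+\beta}=1$), I would expand $[x_\alpha^-,[x_\alpha^+,x_\beta^+]]$ and $[x_\beta^-,[x_\alpha^+,x_\beta^+]]$ via the super-Jacobi identity; using the isotropy of $\alpha,\beta$ and the relations $[x_\gamma^+,x_\gamma^-]=h_\gamma$, these reduce to $\pm\kappa\lambda\,x_\beta^+ = a_{\alpha\beta}x_\beta^+-[x_\alpha^+,w]$ and $\pm\kappa\mu\,x_\alpha^+ = a_{\beta\alpha}x_\alpha^+ +[z,x_\beta^+]$, where $w=[x_\alpha^-,x_\beta^+]$ and $z=[x_\alpha^+,x_\beta^-]$. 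When $\alpha-\beta\notin\Delta$ we have $w=z=0$, and since the hypothesis $h_{\alpha+\beta}\in\operatorname{span}\{h_\alpha,h_\beta\}$ together with $(\alpha+\beta)(h_{\alpha+\beta})=2$ and the isotropy of $\alpha,\beta$ forces $c_\alpha a_{\alpha\beta}+c_\beta a_{\beta\alpha}=2$, and hence $a_{\alpha\beta}$ or $a_{\beta\alpha}$ to be nonzero, we conclude $\kappa\neq0$. The main obstacle is the remaining case $\alpha-\beta\in\Delta$, where $w,z$ need not vanish: here I would control the correction terms using $(\operatorname{ad}x_\alpha^{\pm})^2=0$ and $(\operatorname{ad}x_\beta^{\pm})^2=0$ (valid since $\alpha,\beta$ are isotropic, so $2\alpha,2\beta\notin\Delta$ by Remark~\ref{triple}), which severely constrains the $\alpha$- and $\beta$-strings. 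This computation, rather than any conceptual difficulty, is where the bulk of the work lies, and the span hypothesis is precisely what rules out the degenerate configurations.

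For part (3), the hypotheses give $\lie s_{\pm\alpha}=\lie g_{\pm\alpha}\neq0$, so the entire rank-one subalgebra $\lie g(\alpha)$ (of type $\mathfrak{sl}_2$ or $\mathfrak{osp}(1,2)$) lies in $\lie s$. I would realize the even reflection $s_\alpha$ as an inner automorphism $\theta_\alpha$ of $\lie g$ given by a product of exponentials of $\operatorname{ad}x_\alpha^{\pm}$, well defined by admissibility (local nilpotency), which by the construction behind the automorphism $\tau_\alpha$ of Kac satisfies $\theta_\alpha(\lie g_\gamma)=\lie g_{s_\alpha(\gamma)}$ for all $\gamma\in\Delta$; when $\alpha$ is odd non-isotropic one works instead with the $\mathfrak{sl}_2$-subalgebra $\lie g(2\alpha)\subseteq\lie g(\alpha)$, noting $s_\alpha=s_{2\alpha}$ and that $x_{2\alpha}^{\pm}\in\lie s$. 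Since $x_\alpha^{\pm}\in\lie s$ and $\lie s$ is a subalgebra, $\operatorname{ad}x_\alpha^{\pm}$ preserve $\lie s$, hence so does $\theta_\alpha$; therefore $\theta_\alpha(\lie s_\beta)=\lie g_{s_\alpha(\beta)}\cap\lie s=\lie s_{s_\alpha(\beta)}$ is nonzero. As $s_\alpha\in W$ preserves $\Delta^{\mathrm{re}}$, the root $s_\alpha(\beta)$ is real, and we conclude $s_\alpha(\beta)\in\Delta(\lie s)^{\mathrm{re}}$.
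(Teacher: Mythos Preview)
Your arguments for parts (1) and (3) are correct. Part (1) is the same $\lie g(\alpha)$-representation-theory idea the paper uses, only phrased slightly differently; the paper observes that if $x_\beta^+$ were killed by $x_\alpha^+$ then $\beta(h_\alpha)\ge0$, picks $0\neq v\in\lie g_{\alpha+\beta}$, and notes $[x_\alpha^+,[x_\alpha^-,v]]\neq0$ lands in $[\lie s_\alpha,\lie s_\beta]$, but your multiplicity argument is an equivalent variant. Part (3) is genuinely different from the paper: you realise $s_\alpha$ as an inner automorphism built from $\exp(\mathrm{ad}\,x_\alpha^\pm)$ (reducing to $\lie g(2\alpha)$ when $\alpha$ is odd non-isotropic) and observe that it preserves $\lie s$, whereas the paper simply exhibits the nonzero element $(\mathrm{ad}\,x_\alpha^\mp)^{|\beta(h_\alpha)|}(x_\beta^+)\in\lie s_{s_\alpha(\beta)}$ directly from $\mathfrak{sl}_2/\mathfrak{osp}(1,2)$ representation theory. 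Both are fine; yours is more conceptual, the paper's is more elementary and avoids the odd/even case split.

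Part (2), however, has a gap. Your argument is complete when $\alpha-\beta\notin\Delta$, but in the remaining case you only say that the correction terms $[x_\alpha^+,w]$ and $[z,x_\beta^+]$ should be controlled via $(\mathrm{ad}\,x_\alpha^\pm)^2=0$, without indicating how; and as written your two Jacobi expansions do not obviously close, since assuming $\kappa=0$ is consistent with both equations when $w,z\neq0$. The paper avoids the case distinction entirely by arguing the other way around. Suppose $[x_\alpha^+,x_\beta^+]=0$. Since $\alpha+\beta$ is non-isotropic real, part (1) (applied to $\lie s=\lie g$) gives $x_\beta^+=c\,[x_\alpha^-,x_{\alpha+\beta}^+]$ for some $c\neq0$; then
\[
[x_\alpha^-,x_\beta^+]=c\,[x_\alpha^-,[x_\alpha^-,x_{\alpha+\beta}^+]]=0
\]
because $(\mathrm{ad}\,x_\alpha^-)^2=0$ (as $\alpha$ is odd isotropic, $2\alpha\notin\Delta$). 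Thus $x_\beta^+$ is killed by both $x_\alpha^+$ and $x_\alpha^-$, hence by $h_\alpha=[x_\alpha^+,x_\alpha^-]$, forcing $\beta(h_\alpha)=0$; by symmetry $\alpha(h_\beta)=0$. Now $(\alpha+\beta)(h_{\alpha+\beta})=0$ follows from the span hypothesis, contradicting non-isotropy of $\alpha+\beta$. This is exactly the use of $(\mathrm{ad}\,x_\alpha^-)^2=0$ you were reaching for, but applied to $x_{\alpha+\beta}^+$ rather than to $x_\beta^+$, which is what makes the argument close without any assumption on $\alpha-\beta$.
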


\begin{proof}
    We first prove part $(1).$ Without loss of generality we assume that $\alpha$ is a non-isotropic real root. Hence $\mathrm{ad}_{x_{\alpha}^{\pm}}$ acts locally nilpotently and $\lie g(\alpha)$ is isomorphic to $\lie{sl}_2$ or $\lie{osp}(1|2)$ by Remark~\ref{rank1}. If $\alpha+\beta$ is a root and $[\lie{s}_\alpha,\lie{s}_\beta]=0$, we consider the  $\lie g(\alpha)$-module generated by $x_{\beta}^+$ which is finite-dimensional and irreducible.  Thus $\beta(h_{\alpha})\geq 0$ by the representation theory of $\lie{sl}_2$ or $\lie{osp}(1,2)$ (see Lemma~\ref{osp}). By assumption, there exists a non-zero element $v\in \lie g_{\alpha+\beta}$ and the $h_{\alpha}$-eigenvalue is strictly positive. Thus $0\neq [x_{\alpha}^{+},[x_{\alpha}^{-},v]]\in [\lie{s}_\alpha,\lie{s}_\beta]$, which is a contradiction.
    
    To prove $(2)$ assume by contradiction $[x^+_\alpha,x^+_\beta]=0.$ Note that $\alpha,\beta$ are real isotropic odd roots and $\alpha+\beta$ is an even non-isotropic real root by Lemma~\ref{lemimroots}. Hence $\lie{g}(\alpha+\beta)$ is isomorphic to $\lie{sl}_2$ by Remark~\ref{rank1}. Moreover, $(\alpha+\beta)-\alpha=\beta$ implies that $[x^+_{\alpha+\beta},x^+_{-\alpha}]=-ax^+_\beta$ for some non-zero scalar $a$ by part (1) applied to $\lie s=\lie g$. Now, using the super Jacobi identity we obtain (note that $[x^+_{-\alpha},x^+_{-\alpha}]=0$, since $\alpha$ is odd isotropic)
    $$[x^+_{-\alpha},[x^+_{-\alpha},x^+_{\alpha+\beta}]]= -[x^+_{-\alpha},[x^+_{-\alpha},x^+_{\alpha+\beta}]]\implies [x^+_{-\alpha},[x^+_{-\alpha},x^+_{\alpha+\beta}]]=0.$$
    Thus $a[x^+_{-\alpha},x^+_\beta]=0$ and so $[x^+_{-\alpha},x^+_{\beta}]=0.$ Consequently, the vector $x^+_\beta$ is killed by both $x^+_\alpha$ and $x^+_{-\alpha}$ and so $[h_\alpha,x^+_\beta]=0.$ This implies $\beta(h_\alpha)=0$ and similarly one can show $\alpha(h_\beta)=0.$ This is absurd by \eqref{isocond12}, since $\alpha+\beta$ is non-isotropic. 
    
The proof of $(3)$ is similar to \cite[Proposition 2.4]{idv2023root} and we give some details. If $\beta(h_\alpha)=0,$ then $s_\alpha(\beta)=\beta$
        and there is nothing to show. So assume that $\beta(h_\alpha)\neq 0.$ Since the $\lie g(\alpha)$ module generated by $x_\beta^+$ is finite dimensional, it follows from the representation theory of $\lie{sl}_2$ or $\lie{osp}(1,2)$ that 
        \begin{align*}
            (\mathrm{ad}\, x_\alpha^-)^{\beta(h_\alpha)}(x_{\beta}^+)=[x_\alpha^-,[x_\alpha^-,\dots [x_\alpha^-,[x_\alpha^-, x_{\beta}^+]]\cdots ]\neq 0 &\ \text{ if }\beta(h_\alpha)>0,\\
            (\mathrm{ad}\, x_\alpha^+)^{-\beta(h_\alpha)}(x_{\beta}^+)=[x_\alpha^+,[x_\alpha^+,\dots [x_\alpha^+,[x_\alpha^+, x_{\beta}^+]]\cdots ]\neq 0 &\ \text{ if }\beta(h_\alpha)<0,
        \end{align*}
        Therefore, we have $s_\alpha(\beta)=\beta-\beta(h_\alpha)\alpha\in \Delta(\lie s)^{\mathrm{re}}$.
\end{proof}
We record the following remark which follows from \cite[Theorem 2.27]{hoyt10regular} and \cite[Theorem 5.4]{vander89Classification} (see also \cite[Theorem 7.3]{Serganova11KacMoody}). 
\begin{rem}\label{remafftw}
If $\lie g$ is additionally symmetrizable and $\Delta^{\mathrm{re}}$ contains at least one isotropic root (equivalently, one isotropic simple root), then $\lie g$ is isomorphic to a finite-dimensional, affine or twisted affine Kac-Moody superalgebra. In this case we have an explicit description of root systems; see for example \cite[Section 2.5.4]{kac77liesuper} for finite types and \cite[Table 5]{abbas21affine} for (twisted) affine types. These descriptions will be used throughout the article, whenever necessary, without further reference.
\end{rem}
\begin{cor}\label{finaffine}
Let $\lie{g}$ be symmetrizable and $\lie{s}$ be a regular subalgebra. 
\begin{enumerate}
    \item If $\alpha,\beta\in \Delta(\lie s)^{\mathrm{re}},$ then $[\lie s_\alpha,\lie s_\beta]\neq 0$ if and only if $\alpha+\beta\in \Delta.$
    \item If $\Delta(\lie{s})^{\mathrm{re}}$ is symmetric, then $\Delta(\lie{s})^{\mathrm{re}}$ is a closed subroot system.
    \item Let $\alpha,\beta\in \Delta^{\mathrm{re}}$ be such that $\alpha$ is isotropic. Then $\alpha\pm\beta\in \Delta^{\mathrm{re}}$ implies $\alpha\mp\beta\notin \Delta$.
\end{enumerate}
\begin{proof}
Since $\lie{g}$ is symmetrizable, property \eqref{isocond12} always holds and part (2) follows from Proposition~\ref{basiclem}. Part (1) again follows from Proposition~\ref{basiclem} unless $\alpha,\beta$ are isotropic and $\alpha+\beta$ is imaginary. In this case the claim follows from Remark~\ref{remafftw} by the explicit description of roots. For part $(3),$ assume that $\alpha\pm\beta$ is real. Then we have by part (1) (up to non-zero scalars)
$$x_{\pm \beta}^+=[x_\alpha^-,x_{\alpha\pm\beta}^+] \implies [x_\alpha^-,x_{\pm \beta}^+]=[x_\alpha^-,[x_\alpha^-,x_{\alpha\pm\beta}^+]]=0\implies \alpha\mp\beta\notin \Delta.$$ 
\end{proof}
\end{cor}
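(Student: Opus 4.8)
The plan is to derive all three parts from Proposition~\ref{basiclem} together with the structural consequences of symmetrizability, handling them in the order (1), (2), (3) since the later parts reuse the earlier ones. For part~(1), the forward implication is immediate, as $[\lie s_\alpha,\lie s_\beta]\subseteq\lie g_{\alpha+\beta}$ forces $\alpha+\beta\in\Delta$ whenever the bracket is nonzero. For the converse I would split into cases by the types of $\alpha,\beta$. If at least one of them is non-isotropic, Proposition~\ref{basiclem}(1) applies directly. If both are isotropic, the key point is that symmetrizability makes condition~\eqref{isocond12} automatic: by Remark~\ref{triple}(2), $\nu(h_\gamma)$ is proportional to $\gamma$ for real isotropic $\gamma$ and equals $2\gamma/(\gamma,\gamma)$ for real non-isotropic $\gamma$, so when $\alpha+\beta$ is real (hence even non-isotropic by Lemma~\ref{lemimroots}) the vector $\nu(h_{\alpha+\beta})$ lies in the span of $\nu(h_\alpha),\nu(h_\beta)$; applying $\nu^{-1}$ gives~\eqref{isocond12}, and Proposition~\ref{basiclem}(2) settles this subcase. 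The remaining possibility, $\alpha,\beta$ isotropic with $\alpha+\beta$ imaginary, I expect to be the main obstacle: it cannot be reached by the uniform $\lie{sl}_2/\lie{osp}(1,2)$ argument, and instead one must invoke Remark~\ref{remafftw}, which places us in the finite, affine, or twisted affine setting, where the explicit root data can be checked to force $[\lie s_\alpha,\lie s_\beta]\neq 0$.

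For part~(2), I would verify the two defining conditions of a closed subroot system directly, both reducing to part~(1). Real closedness is immediate: if $\alpha,\beta\in\Delta(\lie s)^{\mathrm{re}}$ and $\alpha+\beta\in\Delta^{\mathrm{re}}\subseteq\Delta$, then part~(1) gives $[\lie s_\alpha,\lie s_\beta]\neq 0$, so $\alpha+\beta\in\Delta(\lie s)^{\mathrm{re}}$. For the subroot-system condition $s_\alpha(\beta)\in\Delta(\lie s)^{\mathrm{re}}$, the non-isotropic case follows from Proposition~\ref{basiclem}(3), with the symmetry of $\Delta(\lie s)^{\mathrm{re}}$ supplying the needed hypothesis $-\alpha\in\Delta(\lie s)$; the isotropic case reduces, via the definition~\eqref{defoddref} of the odd reflection, to the statement that $\alpha+\beta\in\Delta(\lie s)^{\mathrm{re}}$ whenever $\alpha+\beta$ is real, which is once more part~(1).

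For part~(3), I would argue by contradiction using the nilpotency of odd isotropic root vectors. Assuming, say, that $\alpha+\beta$ is real, part~(1) applied to $\lie g$ itself lets me write $x_\beta^+=[x_\alpha^-,x_{\alpha+\beta}^+]$ up to a nonzero scalar. Since $\alpha$ is odd isotropic, $[x_\alpha^-,x_\alpha^-]=0$, so the super Jacobi identity yields $[x_\alpha^-,x_\beta^+]=[x_\alpha^-,[x_\alpha^-,x_{\alpha+\beta}^+]]=0$. If $\alpha-\beta$ were a root, then so would be $\beta-\alpha$, and part~(1) applied to the isotropic root $-\alpha$ and $\beta$ (with $\beta-\alpha\in\Delta$) would force $[x_\alpha^-,x_\beta^+]\neq 0$, a contradiction; hence $\alpha-\beta\notin\Delta$. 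The case where $\alpha-\beta$ is real is symmetric.
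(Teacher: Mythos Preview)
Your proposal is correct and follows essentially the same route as the paper: part~(1) via Proposition~\ref{basiclem} with the symmetrizability supplying~\eqref{isocond12} (and Remark~\ref{remafftw} for the residual isotropic/imaginary case), part~(2) by reducing both the closedness and subroot-system conditions to part~(1) and Proposition~\ref{basiclem}(3), and part~(3) via the nilpotency of $\mathrm{ad}_{x_\alpha^-}$ for isotropic $\alpha$ combined with part~(1). The only difference is that you spell out more of the details (e.g.\ the use of Remark~\ref{triple}(2) to verify~\eqref{isocond12}, and the super Jacobi step) that the paper leaves implicit.
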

We end this section with the following example of a subroot system generated by two isotropic real roots.
\begin{example}
    Let $\alpha$ and $\beta$ be two isotropic real roots satisfying \eqref{isocond12} such that $\alpha+\beta$ is a real root. In particular, $\alpha+\beta$ is an even real root and thus non-isotropic. Then we have $[x^+_\alpha,x^+_\beta]\neq 0$ and $[x^-_{\alpha+\beta},x^+_\beta]\neq 0$ by Proposition~\ref{basiclem}. Moreover, $[[x^+_\alpha,x^+_\beta],x^+_\beta]=0$ and hence by considering the $\lie g(\alpha+\beta)$ (which is isomorphic to $\mathfrak{sl}_2$) module generated by $x^+_\beta$ we get $\beta(h_{\alpha+\beta})\in \mathbb{N}$. Similarly we obtain in the same way $\alpha(h_{\alpha+\beta})\in \mathbb{N}$ and thus 
$\alpha(h_{\alpha+\beta})=\beta(h_{\alpha+\beta})=1$ since $(\alpha+\beta)(h_{\alpha+\beta})=2$. This implies $\{\pm \alpha,\pm \beta, \pm (\alpha+\beta)\}$ is a subroot system, since
$$s_{\alpha+\beta}(\alpha)=-\beta,\ \ s_{\alpha+\beta}(\beta)=-\alpha,\ \ $$
\end{example} 

\section{Root strings of Kac-Moody superalgebras}\label{secrootstring}
In this section, we study the root strings of Kac-Moody superalgebras. The Lie algebra counterpart of this section can be found in \cite{billig1995root,morita1988root}. For $\alpha\in\Delta^{\mathrm{re}}$ and $\beta\in\Delta$, the $\alpha$-string through $\beta,$ denoted as $\mathbb{S}(\beta,\alpha),$ is defined by $$\mathbb{S}(\beta,\alpha):=\{\beta+k\alpha:k\in\mathbb{Z}\}\cap\Delta.$$
Note that, if $\alpha$ and $\beta$ are scalar multiples of each other we have
\begin{equation}\label{scalarmultiple}\mathbb{S}(\beta,\alpha)\subseteq \{\pm \alpha, \pm 2\alpha\},\ \ \text{if}\ \ \beta\in\{\pm\alpha,\pm 2\alpha\}, \ \ \mathbb{S}(\beta,\alpha)\subseteq \left\{\pm \frac{\alpha}{2}\right\},\ \ \text{if} \ \ \beta\in\left\{\pm\frac{\alpha}{2}\right\}\end{equation}
\subsection{} The proof of the first part of the following proposition is analogous to \cite[Proposition 5.1]{Kac90Infinite}. We give the details for completeness.

\begin{prop}\label{unbrokenrs}
Let $\alpha,\beta\in \Delta$ be two roots of a Kac-Moody superalgebra such that $\alpha$ is real.

\begin{enumerate}
    \item  If $\alpha$ is non-isotropic, then there exist non-negative integers $p$ and $q$ related by the equation $p-q=\beta(h_\alpha)$ such that $\beta+k\alpha\in \Delta\cup \{0\}$ if and only if $-p\le k\le q.$ In particular, $s_\alpha$ reverses the root string $\mathbb{S}(\beta,\alpha)$. \vspace{0,1cm}
    \item If $\alpha$ is isotropic, then $|\mathbb{S}(\beta,\alpha)|$ is finite.
\end{enumerate}
\end{prop}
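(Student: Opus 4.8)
The plan is to realise each $\alpha$-string as a module over the rank-one subalgebra $\lie g(\alpha)$ and to read off the string from the representation theory recorded in Remark~\ref{rank1} and Lemma~\ref{osp}. Throughout I set $M=\bigoplus_{k\in\bz}\lie g_{\beta+k\alpha}$, which is an $\operatorname{ad}\lie g(\alpha)$-stable subspace of $\lie g$ since $\operatorname{ad}x_\alpha^{\pm}$ shift the index $k$ by $\pm 1$ and $\operatorname{ad}h_\alpha$ preserves it. First I would dispose of the degenerate case $\beta\in\bc\alpha$ directly from \eqref{scalarmultiple} and Remark~\ref{triple}(1), so that in the main argument $\beta+k\alpha\neq 0$ for all $k$ and $M$ is an honest sum of root spaces.

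For part (1), since $\alpha$ is real non-isotropic we may take $\alpha(h_\alpha)=2$ (Remark~\ref{triple}(2)), so $\lie g(\alpha)\cong\lie{sl}_2$ or $\lie{osp}(1,2)$ by Remark~\ref{rank1}, and $\operatorname{ad}x_\alpha^{\pm}$ act locally nilpotently (admissibility of $A_\Sigma$ for a base $\Sigma$ with $\alpha\in\Sigma$ or $\alpha/2\in\Sigma$). Hence $M$ is a locally finite, integrable $\lie g(\alpha)$-module, so it decomposes as a direct sum of the finite-dimensional irreducibles of Lemma~\ref{osp} (and their $\lie{sl}_2$-analogues). The $h_\alpha$-weight of $\lie g_{\beta+k\alpha}$ is $\beta(h_\alpha)+2k$, so all weights of $M$ are congruent to $\beta(h_\alpha)$ modulo $2$; consequently every irreducible summand meets the single root space of weight $\varepsilon\in\{0,1\}$, $\varepsilon\equiv\beta(h_\alpha)$, in dimension one. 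As root spaces are finite-dimensional there are only finitely many summands, so $M$ is finite-dimensional and the string is finite. The weight support of each summand is an interval symmetric about the origin, and such intervals are nested; their union is therefore again a symmetric interval, which yields the unbroken string $\{k:-p\le k\le q\}$ without gaps. Finally the automorphism $\tau_\alpha$ (equivalently, the symmetry of the weight support) shows that $s_\alpha$ sends $\beta+k\alpha$ to $\beta+(-k-\beta(h_\alpha))\alpha$, reversing the string and forcing $p-q=\beta(h_\alpha)$.

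For part (2), $\alpha$ is isotropic, hence odd (Lemma~\ref{lemimroots}(2)) with $a_{\alpha\alpha}=0$, so $\lie g(\alpha)\cong\lie{sl}(1,1)$ and $e:=\operatorname{ad}x_\alpha^{+}$, $f:=\operatorname{ad}x_\alpha^{-}$ satisfy $e^2=f^2=0$ and $ef+fe=\operatorname{ad}h_\alpha$. The key difference from part (1) is that $h_\alpha$ acts on all of $M$ by the \emph{single} scalar $\beta(h_\alpha)$ (because $\alpha(h_\alpha)=0$); the weight of $h_\alpha$ no longer separates the levels, and the counting argument of part (1) collapses. To recover finiteness I would split the string into its imaginary and real parts. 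After replacing $\alpha$ by $-\alpha$ if necessary (which does not change the string as a set) the roots $\beta+k\alpha$ with $|k|$ large are forced into the positive, resp.\ negative, cone; by Lemma~\ref{lemimroots}(1) every imaginary root on the line lies in $D(\Pi)\cup-D(\Pi)$. Since $D(\Pi)$ spans a pointed convex cone containing neither $\alpha$ nor $-\alpha$ (a base $\Sigma$ with $\alpha\in\Sigma$, resp.\ $s_\alpha(\Sigma)$ with $-\alpha\in s_\alpha(\Sigma)$, witnesses this), the line $\beta+\br\alpha$ meets this cone in a bounded set, so only finitely many roots on the line are imaginary.

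It remains to bound the real roots on the line, which I expect to be the genuinely delicate point. Parities alternate along the string, and by Corollary~\ref{finaffine}(3) (in the symmetrizable case) three consecutive real roots are impossible, so the real roots occur in isolated clusters of length at most two; the task is to exclude infinitely many such clusters. In the symmetrizable case this is immediate: the presence of the isotropic root $\alpha$ places $\lie g$ in finite, affine or twisted affine type by Remark~\ref{remafftw}, and in the explicit root systems a line $\beta+\bz\alpha$ with $\alpha$ real meets $\Delta$ only finitely often, since the finite part of $\beta+k\alpha$ leaves the finite root system for all but finitely many $k$. I would therefore finish the real-root bound by this case analysis and combine it with the cone estimate above to conclude $|\mathbb{S}(\beta,\alpha)|<\infty$; isolating a uniform, classification-free argument for the real roots is the part I anticipate requiring the most care.
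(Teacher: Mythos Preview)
Your argument for part~(1) is correct and follows the same path as the paper: realise the string as a $\lie g(\alpha)$-module, use local nilpotence of $\operatorname{ad}x_\alpha^{\pm}$ to decompose into finite-dimensional irreducibles, and observe that every summand meets one fixed root space, so the module is finite-dimensional with unbroken symmetric weight support.

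For part~(2) your strategy diverges substantially from the paper's and contains a genuine gap. You split the string into its imaginary and real parts; the cone estimate for imaginary roots via $D(\Pi)$ can indeed be made rigorous using the two bases $\Sigma\ni\alpha$ and $s_\alpha(\Sigma)\ni-\alpha$, but for the real roots you explicitly fall back on symmetrizability and the finite/affine classification of Remark~\ref{remafftw}. The proposition, however, is stated and proved in Section~\ref{secrootstring} \emph{before} the standing symmetrizability hypothesis of Section~\ref{secrgs} is imposed, so your argument does not establish the general case. You yourself flag this (``isolating a uniform, classification-free argument \dots\ is the part I anticipate requiring the most care''), and that missing piece is precisely what the paper supplies.

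The paper's proof of~(2) avoids the real/imaginary split entirely and is elementary. Since $\alpha$ is odd isotropic, $[x_\alpha^+,x_\alpha^+]=0$, whence $(\operatorname{ad}x_\alpha^+)^2=0$ by the super Jacobi identity. Fix a base $\Sigma$ with $\alpha\in\Sigma$ and write $\beta=\sum_{\gamma\in\Sigma}a_\gamma\gamma$ (say $\beta\in C_\Sigma^+$). Any nonzero Lie word in the generators $\{x_\gamma^+:\gamma\in\Sigma\}$ lying in $\lie g_{\beta+k\alpha}$ uses $a_\alpha+k$ copies of $x_\alpha^+$ and $\sum_{\gamma\neq\alpha}a_\gamma$ copies of the remaining generators; the relation $(\operatorname{ad}x_\alpha^+)^2=0$ forces the former not to exceed the latter, bounding $k$ above. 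In the other direction, for $t>a_\alpha$ the element $\beta-t\alpha$ has a negative $\alpha$-coefficient but nonnegative coefficients elsewhere (and $\beta\notin\bc\alpha$), so it lies in neither $C_\Sigma^+$ nor $-C_\Sigma^+$ and is therefore not a root. This bounds the whole string at once and needs neither symmetrizability nor any classification.
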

\begin{proof}
 We start proving part (1). The result is easily checked if $\beta$ and $\alpha$ are scalar multiple of each other. Otherwise, consider the subspace $U=\sum_{k\in \mathbb{Z}} \lie g_{\beta+k\alpha}$ of $\lie g.$ Since $\alpha$ is non-isotropic, the subalgebra $\lie g(\alpha)$ 
    is isomorphic to either $\lie{sl}_2$ or $\lie{osp}(1,2).$ Let $v$ be a root vector in $U.$ Since $\mathrm{ad}_{x_\alpha^\pm}$ acts locally nilpotently, the $\lie g{(\alpha)}$ submodule of $U$ generated by $v$ is finite-dimensional. Thus, by the representation theory of $\lie{sl}_2$ or $\lie{osp}(1,2)$, we have that $U$ is a sum of finite-dimensional irreducible $\lie g{(\alpha)}$-modules. 
    Since the eigenvalues of $h_\alpha$ in $U$ are of the form $\beta(h_\alpha)+2k$, it follows that every irreducible module that occur in $U$ contains a non-zero vector in $\lie g_{\beta+t\alpha}$ where $$t=\begin{cases}
        \frac{-\beta(h_\alpha)}{2}& \text{ if } \beta(h_\alpha) \text{ is even},\\
        \frac{1-\beta(h_\alpha)}{2}& \text{ if } \beta(h_\alpha) \text{ is odd}.
    \end{cases}$$
    Therefore, $U$ is equal to the $\lie g{(\alpha)}$ submodule generated by the root space $\lie g_{\beta+t\alpha}$. In particular, $U$ is finite-dimensional and the claim follows from the representation theory of $\lie g(\alpha).$
    
  Now we prove part (2). If $\alpha$ and $\beta$ are scalar multiples of each other, the statement is clear. Since $\alpha$ is real and odd we must have $\alpha\in\Sigma$ for a base $\Sigma$. Moreover, 
\begin{equation}\label{dreir}[x^+_{\alpha},[x_{\alpha}^+,y]]=0,\ \ y\in \lie g.\end{equation} We first assume that $\beta\in C^+_\Sigma$, i.e., $\beta=\sum_{\gamma\in \Sigma}a_{\gamma}\gamma$ where $a_{\gamma}\geq 0$. If $k\in \mathbb{Z}_+$ is such that $\beta+k\alpha\in \Delta$, we can express any element in $\mathfrak{g}_{\beta+k\alpha}$ as a linear combination of Lie words in the root vectors corresponding to the base $\Sigma$. However, by \eqref{dreir} we must have 
$$\mathfrak{g}_{\beta+k\alpha}\neq 0 \implies k\leq \sum_{\gamma\in \Sigma\setminus\{\alpha\}}a_\gamma-a_\alpha.$$
Moreover, if $t > a_\alpha$, we claim that $\beta - t\alpha$ is never a root. Indeed, we have
$$\beta - t\alpha = \sum_{\gamma \in \Sigma \setminus \{\alpha\}} a_{\gamma} \gamma + (a_\alpha - t)\alpha.$$
Since $\beta$ is not a scalar multiple of $\alpha$, there exists some $\gamma \in \Sigma \setminus \{\alpha\}$ with $a_{\gamma} > 0$. Since $t > a_\alpha$, we must have $\beta - t\alpha\notin \Delta$. The proof  for the case $\beta\in -C^+_\Sigma$ is similar.
\end{proof}

We will identify the real roots in a root string. We begin with the following lemma. 
\begin{lemma}\label{lemcontimroots}
    Let $\alpha,\beta\in \Delta$ be such that $\alpha$ is real. Then any root lying in between two imaginary roots in $\mathbb{S}(\beta,\alpha)$ is imaginary.
\end{lemma}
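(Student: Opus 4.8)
The plan is to exploit the convex cone $D(\Pi)=\bigcap_\Sigma C_\Sigma^+$ together with Lemma~\ref{lemimroots}(1), which tells us that every imaginary root lies in $D(\Pi)\cup -D(\Pi)$. Write the two given imaginary roots as $\mu=\beta+m\alpha$ and $\nu=\beta+n\alpha$ with $m<n$, and let $\gamma=\beta+k\alpha$ be a root with $m<k<n$; I must show that $\gamma$ is imaginary. If $\beta\in\mathbb{C}\alpha$, then by \eqref{scalarmultiple} the whole string sits inside $\{\pm\alpha,\pm2\alpha\}$ or $\{\pm\alpha/2\}$, which consists of real roots by Remark~\ref{triple}(1), so the statement is vacuous; hence I may assume $\beta$ and $\alpha$ are linearly independent. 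Observing that $\gamma=\lambda\mu+(1-\lambda)\nu$ with $\lambda=(n-k)/(n-m)\in(0,1)$, the root $\gamma$ is a convex combination of $\mu$ and $\nu$, and the proof reduces to locating $\mu$ and $\nu$ relative to $D(\Pi)$.

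The key auxiliary statement I would isolate first is that no nonzero scalar multiple of a real root lies in $D(\Pi)$; in particular, a root contained in $D(\Pi)\cup -D(\Pi)$ is automatically imaginary, which is the converse of Lemma~\ref{lemimroots}(1) at the level of roots. To prove it, pick a base $\Sigma'$ obtained from $\Pi$ by even and odd reflections with $\alpha\in\Sigma'$ or $\alpha/2\in\Sigma'$. Then $\alpha$ has non-negative $\Sigma'$-coordinates, while after one reflection at $\alpha$ (resp.\ $\alpha/2$) the identity $\Delta^+(\Sigma')\setminus\{\alpha,2\alpha\}=\Delta^+(s_\alpha(\Sigma'))\setminus\{-\alpha,-2\alpha\}$ shows that $\alpha$ becomes a negative root, hence has non-positive coordinates in $\Sigma''=s_\alpha(\Sigma')$. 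Since every element of $D(\Pi)\subseteq C_\Sigma^+=\bz_+\Sigma$ has non-negative coordinates in each base $\Sigma$, a nonzero multiple $c\alpha$ violates this in $\Sigma'$ (when $c<0$) or in $\Sigma''$ (when $c>0$), so $c\alpha\notin D(\Pi)$.

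With this in hand I split into cases according to Lemma~\ref{lemimroots}(1). As $D(\Pi)\subseteq C_\Pi^+$ consists of non-negative combinations of simple roots, an imaginary root lies in $D(\Pi)$ precisely when it is positive and in $-D(\Pi)$ precisely when it is negative. If $\mu$ and $\nu$ lie on the same side, say both in $D(\Pi)$, then for every base $\Sigma$ the $\Sigma$-coordinates of $\gamma$ are the corresponding convex combination of the non-negative coordinates of $\mu$ and $\nu$, hence non-negative; as $\gamma$ is a root its $\Sigma$-coordinates are integers, so they are non-negative integers and $\gamma\in\bz_+\Sigma=C_\Sigma^+$. Thus $\gamma\in D(\Pi)$, and $\gamma$ is imaginary by the auxiliary fact (the case $\mu,\nu\in -D(\Pi)$ is identical after negating). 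It remains to exclude the mixed configuration $\mu\in D(\Pi)$, $\nu\in -D(\Pi)$ (or vice versa): here $\mu$ and $-\nu$ both lie in $D(\Pi)$, and since $D(\Pi)$ is closed under addition we obtain $\mu-\nu=(m-n)\alpha\in D(\Pi)$ with $m-n\neq 0$, contradicting the auxiliary fact. Hence the mixed case never occurs and $\gamma$ is always imaginary.

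The main obstacle is exactly this mixed configuration: convexity alone controls $\gamma$ only when $\mu$ and $\nu$ sit on the same side of $D(\Pi)$, and ruling out a positive imaginary root and a negative imaginary root coexisting in a single real-root string is where the auxiliary fact that nonzero multiples of real roots avoid $D(\Pi)$ does the essential work. I expect the only delicate points to be the bookkeeping of $\Sigma$-coordinates when $\alpha/2$ rather than $\alpha$ lies in the chosen base, and verifying that the reflected base $\Sigma''$ is again reachable from $\Pi$ by even and odd reflections, so that it genuinely participates in the intersection defining $D(\Pi)$.
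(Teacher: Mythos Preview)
Your argument is correct. You use the same core ingredient as the paper---namely the cone $D(\Pi)$ together with Lemma~\ref{lemimroots}(1)---but the organization is genuinely cleaner. The paper argues by contradiction: assuming the intermediate root $\beta'$ is real, it fixes a base $\Sigma'$ containing $s\beta'$ and a base $\Sigma''$ containing $t\alpha$, and then pushes through explicit $\Sigma'$- and $\Sigma''$-coordinate computations to force the two imaginary roots onto opposite sides of $D(\Pi)$ and extract a contradiction. Your version instead isolates the converse of Lemma~\ref{lemimroots}(1) (real roots, and indeed all their nonzero multiples, avoid $D(\Pi)$) as a standalone auxiliary fact, then finishes with two one-line cone arguments: convexity of $D(\Pi)$ for the same-side case, and additive closure of $D(\Pi)$ applied to $\mu+(-\nu)=(m-n)\alpha$ to exclude the mixed case. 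This avoids the coordinate bookkeeping entirely. The two delicate points you flag are harmless: when $\alpha/2\in\Sigma'$ one simply reflects in $\alpha/2$ (which is non-isotropic, hence an even reflection), and the reflected base $s_\alpha(\Sigma')$ is again obtained from $\Pi$ by even/odd reflections because $\mathfrak g$ is assumed regular (so the odd reflection is permitted by Lemma~\ref{basereg}).
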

\begin{proof}
Assume that there exist a real root $\beta'\in \mathbb{S}(\beta,\alpha)$ and $r,k\in\mathbb{N}$ such that $\beta'-r\alpha,\beta'+k\alpha\in \Delta^{\mathrm{im}}.$ Note that $\beta$ and $\alpha$ are not scalar multiple of each other by \eqref{scalarmultiple}.
    Since $\alpha$ and $\beta'$ are both real roots, there exist bases $\Sigma'$ and $\Sigma''$ and $s,t\in \{1,1/2\}$ such that $s\beta'\in \Sigma'$ and $t\alpha\in\Sigma''.$ Using Lemma \ref{lemimroots}, we first assume that $\beta'-r\alpha\in D(\Pi)$. Note that $t\alpha\in\Sigma''$ gives $s\beta'\in \mathbb{Q}_{\geq 0}\Sigma''$ and since $s\beta'$ is a root we must actually have $s\beta'\in C_{\Sigma''}^+$. Moreover, we write $\beta'-r\alpha=\sum_{\gamma\in\Sigma'}c_\gamma\gamma$ for some non-negative integers $c_\gamma.$ Now since
    $$r\alpha=(-c_{s\beta'}+1/s)s\beta'+\sum_{\gamma\in\Sigma'\backslash\{s\beta'\}}(-c_\gamma)\gamma$$
    and  $\alpha$ and $\beta'$ are not scalar multiples of each other, it follows that there exists $\gamma\in\Sigma'\backslash\{s\beta'\}$ such that $c_\gamma>0.$ This gives $\beta'+k\alpha\in -D(\Pi)$ since $\beta'+k\alpha$ is an imaginary root and 
    $$\beta'+k\alpha=\left[\frac{1}{s}+\frac{k}{r}\left(\frac{1}{s}-c_{s\beta'}\right)\right]s\beta'+\frac{k}{r}\left(\sum_{\gamma\in\Sigma'\backslash\{s\beta'\}}(-c_\gamma)\gamma\right)\in -C_{\Sigma'}^+  .$$
    We write $\beta'+k\alpha=\sum_{\mu\in \Sigma''}d_\mu\mu$ with $d_\mu\in \bz_{\le 0}$ and at least one of the coefficients is strictly negative. Then we have $$\beta'=(d_{t\alpha}-k/t)t\alpha+\sum_{\mu\in \Sigma''\backslash\{t\alpha\}} d_\mu\mu\in -C^+_{\Sigma''},$$ which contradicts the fact that $s\beta'\in C_{\Sigma''}^+.$ The proof for the case  $\beta'-r\alpha\in -D(\Pi)$ is similar.
\end{proof}
The next corollary characterizes the shape of $\mathbb{S}(\beta,\alpha)$ when it contains a real root. A black circle denotes a real root, while a white circle indicates an imaginary root.
\begin{cor}\label{proprootstring}
    Let $\alpha,\beta\in \Delta$ be such that $\alpha$ is real and non-isotropic. If $|\mathbb{S}(\beta,\alpha)\cap\Delta^{\mathrm{re}}|\geq 1,$ then $\mathbb{S}(\beta,\alpha)$ is of the following form:
    $$\overbrace{\tikzcircle[fill=black]{3.5pt}\ \ \ \tikzcircle[fill=black]{3.5pt} \ \ \cdots\ \  \tikzcircle[fill=black]{3.5pt}}^{p-\text{times}}\ \ \ \overbrace{\tikzcircle{3.5pt}\ \ \ \tikzcircle{3.5pt}\ \  \cdots \ \ \tikzcircle{3.5pt}}^{q-\text{times}} \ \ \ \ \overbrace{\tikzcircle[fill=black]{3.5pt}\ \ \ \tikzcircle[fill=black]{3.5pt}\ \ \cdots \ \ \tikzcircle[fill=black]{3.5pt}}^{r-\text{times}}$$
    Moreover, if $q\neq 0,$ then $p=r.$
\end{cor}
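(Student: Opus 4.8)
The plan is to read off the \emph{real--imaginary--real} shape from Lemma~\ref{lemcontimroots} and to extract the equality $p=r$ from the reversal symmetry recorded in Proposition~\ref{unbrokenrs}(1). First I would dispose of the degenerate case: if $\beta$ is a scalar multiple of $\alpha$, then by \eqref{scalarmultiple} the string is contained in $\{\pm\alpha,\pm2\alpha\}$ or in $\{\pm\alpha/2\}$, and every element of these sets is real (its relevant half lies in a base together with $\alpha$); thus $q=0$ and there is nothing to prove. So I assume $\beta\notin\mathbb{C}\alpha$, in which case Proposition~\ref{unbrokenrs}(1) provides integers $k_0\le k_1$ with $\mathbb{S}(\beta,\alpha)=\{\beta+k\alpha: k_0\le k\le k_1\}$, an \emph{unbroken} interval of roots (no index yields $0$).

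For the shape, I would label each index $k\in[k_0,k_1]$ according to whether $\beta+k\alpha$ is real or imaginary. By Lemma~\ref{lemcontimroots} the imaginary indices form an interval $[a,b]\subseteq[k_0,k_1]$ (possibly empty), since any root lying strictly between two imaginary roots is imaginary. Hence the real indices are exactly $[k_0,a-1]\cup[b+1,k_1]$, and putting $p=a-k_0$, $q=b-a+1$, $r=k_1-b$ reproduces precisely the claimed pattern; when no imaginary root occurs we simply have $q=0$ and a single real block, which is consistent.

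For the \emph{moreover} clause I would exploit that $s_\alpha$ reverses the string. Using $s_\alpha(\beta+k\alpha)=\beta+k\alpha-(\beta(h_\alpha)+2k)\alpha$ together with the relation $\beta(h_\alpha)=-(k_0+k_1)$ coming from Proposition~\ref{unbrokenrs}(1), one checks that on indices $s_\alpha$ acts as the reflection $k\mapsto k_0+k_1-k$ about the midpoint of the string. The decisive point is that $s_\alpha$ preserves the real/imaginary dichotomy: since $\alpha$ is real and non-isotropic we have $s_\alpha\in W$, and $W$ maps bases to bases and therefore preserves $\Delta^{\mathrm{re}}$ (equivalently, $s_\alpha=\tau_\alpha|_{\lie h^*}$ for an automorphism $\tau_\alpha$ of $\lie g$ with $\tau_\alpha(\lie g_\gamma)=\lie g_{s_\alpha(\gamma)}$). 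Consequently $s_\alpha$ carries the set of imaginary roots of the string bijectively onto itself. If $q\neq0$ this set is the non-empty interval $[a,b]$, whose image under $k\mapsto k_0+k_1-k$ is $[k_0+k_1-b,\ k_0+k_1-a]$; equating these two intervals forces $a-k_0=k_1-b$, that is, $p=r$.

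I expect the only genuine subtlety to be the assertion that $s_\alpha$ preserves the partition of the roots into real and imaginary ones; all remaining steps are bookkeeping with the unbroken interval and with Lemma~\ref{lemcontimroots}. This subtlety is handled exactly as above, by placing $s_\alpha$ inside $W$ (permissible because $\alpha$ is real non-isotropic) and invoking the fact, already available in the excerpt, that the Weyl group maps bases to bases and hence sends real roots to real roots and imaginary roots to imaginary roots.
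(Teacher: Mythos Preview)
Your proof is correct and follows essentially the same approach as the paper's: the paper's proof is a one-line sketch invoking Proposition~\ref{unbrokenrs}, Lemma~\ref{lemcontimroots} (to see that the endpoints of the string are real, equivalently that the imaginary roots form an interval), and the fact that $s_\alpha$ reverses $\mathbb{S}(\beta,\alpha)$ while preserving the real/imaginary partition. Your write-up simply unfolds these ingredients into the explicit index bookkeeping $k\mapsto k_0+k_1-k$, which yields $p=r$ exactly as the paper intends.
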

\begin{proof}
The proof follows from Proposition~\ref{unbrokenrs}, Lemma~\ref{lemcontimroots} (left most and right most roots in the string have to be real) and the fact that $s_\alpha$ ($\alpha$ non-isotropic) reverses the root string $\mathbb{S}(\beta,\alpha)$ and maps real to real roots. 
\end{proof}  
\subsection{} \label{subsection32}
In this subsection, we shall prove the results analogous to Morita in \cite{morita1988root} and Billig--Pianzola established in \cite{billig1995root}. 
\begin{lemma}\label{sumnotreal}
    Let $\alpha$ and $\beta$ be two real non-isotropic roots satisfying $\alpha(h_\beta)<-1$ and $\beta(h_\alpha)<-1.$ Then $\alpha+\beta$ is not a real root.
\end{lemma}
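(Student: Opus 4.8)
The plan is to assume, toward a contradiction, that $\gamma:=\alpha+\beta$ is a real root, and to exploit the two Cartan integers $a:=\beta(h_\alpha)$ and $b:=\alpha(h_\beta)$. First I would clear away the degenerate situations. Since $\alpha,\beta$ are non-isotropic real roots, $\mathfrak g(\alpha)$ and $\mathfrak g(\beta)$ are isomorphic to $\mathfrak{sl}_2$ or $\mathfrak{osp}(1,2)$ by Remark~\ref{rank1}, so the representation theory recorded in Lemma~\ref{osp} forces $a,b$ to be integers, and the hypothesis gives $a,b\le -2$. If $\alpha$ and $\beta$ were proportional, Remark~\ref{triple}(1) limits the ratio to $\{\pm 1,\pm 2,\pm\tfrac12\}$, and the only value compatible with $a,b\le -2$ is $\beta=-\alpha$, in which case $\gamma=0\notin\Delta$ and there is nothing to prove. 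Hence I may assume $\alpha,\beta$ are linearly independent.

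The key elementary observation, requiring no bilinear form, is that $\gamma$ is anti-dominant with respect to $\alpha$ and $\beta$:
\[
\gamma(h_\alpha)=2+\beta(h_\alpha)=2+a\le 0,\qquad \gamma(h_\beta)=\alpha(h_\beta)+2=b+2\le 0 .
\]
In the symmetrizable setting this is reinforced on the level of lengths: from $a=\tfrac{2(\alpha,\beta)}{(\alpha,\alpha)}$ and $b=\tfrac{2(\alpha,\beta)}{(\beta,\beta)}$ one gets $(\beta,\beta)=\tfrac{a}{b}(\alpha,\alpha)$, so $(\alpha,\alpha)$ and $(\beta,\beta)$ have the same sign, while
\[
(\gamma,\gamma)=(\alpha,\alpha)\Big(1+a+\tfrac{a}{b}\Big),
\]
and a short check shows $1+a+\tfrac{a}{b}\le 0$ for $a,b\le -2$, with equality exactly when $a=b=-2$. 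Thus $(\gamma,\gamma)$ is non-positive and, when nonzero, of the \emph{opposite} sign to $(\alpha,\alpha)$.

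Now I would run the contradiction. Suppose first that $\gamma$ is non-isotropic; by Lemma~\ref{lemimroots} it is then even. Set $\mathfrak s=\mathfrak g(\{\alpha,\beta\})$: since $\alpha$ is non-isotropic and $\alpha+\beta\in\Delta$, Proposition~\ref{basiclem}(1) gives $[x_\alpha^+,x_\beta^+]\ne 0$, so $\gamma\in\Delta(\mathfrak s)$ and hence $\gamma\in\Delta(\mathfrak s)^{\mathrm{re}}$, which is a closed subroot system by Corollary~\ref{finaffine}(2). On this rank-two system the even reflections $s_\alpha,s_\beta$ act as isometries of the invariant form, so every real root obtained from $\alpha,\beta$ by reflections has length of the fixed sign of $(\alpha,\alpha)$, contradicting the opposite sign of $(\gamma,\gamma)$. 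Form-freely, the same conclusion follows from anti-dominance: the pairings $\gamma(h_\alpha),\gamma(h_\beta)\le 0$ should keep the $\langle s_\alpha,s_\beta\rangle$-orbit of $\gamma$ inside $\mathbb Z_{\ge 0}\alpha+\mathbb Z_{\ge 0}\beta$, whence $\gamma\in D(\Pi)$ and $\gamma$ is imaginary by Lemma~\ref{lemimroots}(1). The only remaining case is $\gamma$ isotropic, which forces $a=b=-2$ and $(\gamma,\gamma)=0$; then $\gamma$ is an odd isotropic real root, so $\Delta^{\mathrm{re}}$ contains an isotropic root, and Remark~\ref{remafftw} places $\mathfrak g$ among the finite, affine or twisted affine Kac–Moody superalgebras, where a direct inspection of the explicit root systems rules out the configuration.

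I expect the main obstacle to be the rank-two control in the non-isotropic case: showing that the entire $\langle s_\alpha,s_\beta\rangle$-orbit of $\gamma$ stays positive (equivalently, that $\Delta(\mathfrak s)^{\mathrm{re}}$ consists only of roots whose length has the sign of $(\alpha,\alpha)$, rather than acquiring extra roots through the closure operation). This is precisely the step where the condition $ab\ge 4$ — making $s_\alpha s_\beta$ of infinite order and pushing $\gamma$ into the imaginary cone — must be invoked. The secondary technical point is the explicit verification in the isotropic boundary case $a=b=-2$, which is dispatched through the classification recalled in Remark~\ref{remafftw} together with the string finiteness of Proposition~\ref{unbrokenrs}.
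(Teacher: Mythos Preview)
Your approach has two genuine gaps, and the proposal as it stands does not close them.

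\textbf{The length argument does not yield a contradiction.} Your computation $(\gamma,\gamma)=(\alpha,\alpha)\bigl(1+a+\tfrac{a}{b}\bigr)$ is correct, and the sign flip is a nice observation. But in Kac--Moody superalgebras, real non-isotropic roots are \emph{not} required to have lengths of a single sign; already in the finite types $A(m,n)$, $B(m,n)$, etc.\ one has real roots with $(\cdot,\cdot)>0$ and others with $(\cdot,\cdot)<0$. So knowing that $(\gamma,\gamma)$ and $(\alpha,\alpha)$ have opposite signs is not, by itself, a contradiction to $\gamma\in\Delta^{\mathrm{re}}$. You try to localize the problem to $\Delta(\mathfrak s)^{\mathrm{re}}$ with $\mathfrak s=\mathfrak g(\{\alpha,\beta\})$ and argue that this set consists only of $\langle s_\alpha,s_\beta\rangle$-images of $\alpha,\beta$ (which would all share the sign of $(\alpha,\alpha)$). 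But you have no tool for this: $\{\alpha,\beta\}$ need not be a $\pi$-system (you do not know $\alpha-\beta\notin\Delta$), and the only structural description of $\Delta(\mathfrak g(\Sigma))^{\mathrm{re}}$ available in the paper is Theorem~\ref{proprealroots}, whose proof \emph{uses} the very lemma you are trying to prove. You correctly flag this as ``the main obstacle'', but nothing in the proposal resolves it.

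\textbf{The form-free variant is a non sequitur.} From $\gamma(h_\alpha)\le 0$ and $\gamma(h_\beta)\le 0$ you get that the $\langle s_\alpha,s_\beta\rangle$-orbit of $\gamma$ stays in $\mathbb Z_{\ge 0}\alpha+\mathbb Z_{\ge 0}\beta$. This says nothing about $D(\Pi)$, which is the intersection of positive cones over \emph{all} bases obtained from $\Pi$ by arbitrary even and odd reflections, not just $s_\alpha$ and $s_\beta$. And Lemma~\ref{lemimroots}(1) runs in the wrong direction for your purpose: it asserts that imaginary roots lie in $\pm D(\Pi)$, not that elements of $D(\Pi)$ are imaginary. (It is also worth noting that the lemma, sitting in Section~\ref{secrootstring}, is proved in the paper \emph{without} assuming symmetrizability, so the bilinear form is an extra hypothesis in your argument.)

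The paper's proof avoids all of this by a direct parity-based case analysis using only the unbroken root string property (Proposition~\ref{unbrokenrs}) and Lemma~\ref{lemcontimroots}: one separates the cases $\alpha+\beta$ even (with $\alpha,\beta$ both odd or both even) and $\alpha+\beta$ odd (one of each), and in each case derives a broken root string or a real root trapped between two imaginary ones. No bilinear form, no rank-two subsystem, no classification is needed except in your own sketch of the isotropic boundary case.
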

\begin{proof}
Without loss of generality we can assume that $\beta$ and $\alpha$ are not scalar multiples of each other. If possible, assume that $\alpha+\beta$ is a real root. First we consider the case when $\alpha+\beta$ is an even root. Note that we have  $2(\alpha+\beta)\notin \Delta$ by Remark~\ref{triple}(1). If $\alpha$ is odd (then $\beta$ is also odd), then $2\alpha\in \Delta.$ Since $s_\beta(2\alpha)=2\alpha-2\alpha(h_\beta)\beta\in \Delta$ and $\alpha(h_\beta)<-1,$ it follows that the root string $\mathbb{S}(2\alpha,\beta)$ is broken, which is a contradiction. If $\alpha$ is even, then $\beta$ is also even and therefore $2\alpha\notin \Delta$ and $2\beta\notin \Delta.$ Since $s_\beta(\alpha)=\alpha-\alpha(h_\beta)\beta\in \Delta$ and $\mathbb{S}(\alpha,\beta)$ is unbroken, we have $\alpha+2\beta\in \Delta$. Hence $\mathbb{S}(\alpha+2\beta,\alpha)=\{\alpha+2\beta\}$ and $(\alpha+2\beta)(h_\alpha)=0$ again by Proposition~\ref{unbrokenrs}(1). Thus $\beta(h_\alpha)=-1$ which is a contradiction. 

Now assume that $\alpha+\beta$ is an odd real root and suppose without loss of generality that $\alpha$ is odd and $\beta$ is even. If $\alpha+\beta$ is isotropic, then $2(\alpha+\beta)$ is not a root (again by Remark~\ref{triple}(1)). Now $s_\beta(2\alpha)\in\mathbb{S}(2\alpha,\beta)$ and $\alpha(h_\beta)<-1$ together imply that the root string $\mathbb{S}(2\alpha,\beta)$ is broken, which is a contradiction. Lastly, assume that $\alpha+\beta$ is odd non-isotropic. Then $2(\alpha+\beta)$ is a real root. Note that the roots $\beta$ and $2\alpha$ are both even and real and $2\alpha(h_\beta)\leq -4<-1$ holds. If $\beta(h_{2\alpha})<-1$, then by the proof of the first case, the root $\beta+2\alpha$ is imaginary (again the fact that $\mathbb{S}(\beta,2\alpha)$ is unbroken guarantees that $\beta+2\alpha$ is a root). Since $s_\beta(2\alpha+\beta)=2\alpha+t\beta$ for some $t\geq 3,$ we obtain that in the root string $\mathbb{S}(2\alpha,\beta)$ in between two imaginary roots $2\alpha+\beta$ and $2\alpha+t\beta,$ there is a real root namely $2(\alpha+\beta),$ which contradicts Lemma \ref{lemcontimroots}. If $\beta(h_{2\alpha})=-1$, then $s_\alpha(\beta)=\beta+2\alpha$ is an even real root and thus $2\beta+4\alpha\notin \Delta.$ Since $\alpha(h_\beta)<-1$, we have $\alpha+2\beta\in\Delta$ and therefore $s_\alpha(\alpha+2\beta)=3\alpha+2\beta\in\Delta.$ Now using Proposition \ref{unbrokenrs}, we obtain that $\mathbb{S}(3\alpha+2\beta,\beta)\subseteq \{\beta+3\alpha,2\beta+3\alpha\}$. Consequently, we get $(3\alpha+2\beta)(h_\beta)\in\{0,1\}$ and thus $\alpha(h_\beta)\in \{-1,-\frac{4}{3}\}$ which is a contradiction. This completes the proof.
\end{proof}

We also have an analogous result to \cite[Exercise 5.14]{Kac90Infinite}.

\begin{prop}\label{prop32}
    Let $\alpha$ and $\beta$ be two real roots in $\Delta$ such that $\alpha$ is non-isotropic.
    \begin{enumerate}
        \item If $\beta$ is non-isotropic and $\beta-\alpha\notin \Delta$ , then the root string $\mathbb{S}(\beta,\alpha)$ contains at most four real roots. \vspace{0,1cm}
        \item If $\lie g$ is symmetrizable, then the root string $\mathbb{S}(\beta,\alpha)$ contains at most four real roots.
    \end{enumerate}
    
\end{prop}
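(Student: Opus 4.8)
The plan is to reduce both statements to a bound on the lengths of the maximal runs of \emph{consecutive} real roots inside $\mathbb{S}(\beta,\alpha)$. We may assume that $\alpha$ and $\beta$ are not scalar multiples of each other, since otherwise \eqref{scalarmultiple} already forces $|\mathbb{S}(\beta,\alpha)|\le 2$. By Corollary~\ref{proprootstring} the real roots of $\mathbb{S}(\beta,\alpha)$ form a configuration of $p$ consecutive real, $q$ consecutive imaginary, and $r$ consecutive real roots, with $p=r$ whenever $q\neq 0$. Hence the number of real roots is $2p$ if $q\neq 0$ and $p+r$ if $q=0$, and it suffices to show (i) that any maximal run of consecutive real roots that is adjacent to an imaginary root has at most two elements (this gives $p=r\le 2$, hence at most four real roots, when $q\neq 0$), and (ii) that a string all of whose roots are real (the case $q=0$) contains at most four roots. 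Throughout I would use that, since $\alpha$ is real non-isotropic, the full string is unbroken and is reversed by $s_\alpha$ (Proposition~\ref{unbrokenrs}), and that $s_\alpha$ preserves reality.

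For part (1) I would exploit the hypothesis $\beta-\alpha\notin\Delta$, which places $\beta$ at the bottom of the $\alpha$-string and, since $\alpha-\beta\notin\Delta$ as well, places $\alpha$ at the bottom of the $\beta$-string. Writing the full string as $\beta,\beta+\alpha,\dots,\beta+m\alpha$ we then have $\beta(h_\alpha)=-m$. If there are at least two real roots then $\beta+\alpha\in\Delta$, and assuming $m\ge 2$ we get $\beta(h_\alpha)<-1$, so Lemma~\ref{sumnotreal} applied to the pair $\alpha,\beta$ (whose sum $\beta+\alpha$ is real) forces $\alpha(h_\beta)\ge -1$; since $\alpha$ is the bottom of the $\beta$-string this pins down $\alpha(h_\beta)=-1$ (the value $0$ would make $\beta+\alpha$ a non-root). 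Supposing now a bottom run of length $p\ge 3$, the configuration of Corollary~\ref{proprootstring} gives $m+1=2p+q\ge 6$, so $m\ge 6$ and $(\beta+\alpha)(h_\alpha)=2-m<-1$; applying Lemma~\ref{sumnotreal} to $\beta+\alpha,\alpha$ (sum $\beta+2\alpha$ real) then yields a further rigid constraint on $\alpha(h_{\beta+\alpha})$, and iterating this step up the run leads to a numerical contradiction with unbrokenness and the position of the first imaginary root. This gives $p\le 2$ when $q\neq 0$ and, applied to the single run, the bound of four when $q=0$.

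For part (2) I would split according to Remark~\ref{remafftw}. If $\Delta^{\mathrm{re}}$ contains no isotropic root, then by Example~\ref{examnoniso} the Weyl group $W$ is the Weyl group of the Kac–Moody algebra $\lie g(B)$ attached to the principal roots and $\Delta^{\mathrm{re}}=W(\Pi\cup\{2\alpha_i:i\in I_1\})$; using the invariant form provided by symmetrizability, so that $\gamma(h_\alpha)=2(\gamma,\alpha)/(\alpha,\alpha)$ for real non-isotropic $\gamma$, the statement reduces to the classical bound of four roots for real root strings in symmetrizable Kac–Moody algebras (\cite{Kac90Infinite}), the only extra bookkeeping being the pairs $\gamma,2\gamma$ arising from odd non-isotropic roots. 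If instead $\Delta^{\mathrm{re}}$ contains an isotropic root, then by Remark~\ref{remafftw} $\lie g$ is finite-dimensional, affine, or twisted affine, and the claim follows directly from the explicit descriptions of these root systems.

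The step I expect to be the main obstacle is the non-symmetrizable bookkeeping in part (1) when $\alpha$ is odd non-isotropic: then $2\alpha$ is again a real root, so controlling $\mathbb{S}(\beta,\alpha)$ requires controlling $\mathbb{S}(\,\cdot\,,2\alpha)$ simultaneously and tracking the parity of each intermediate root $\beta+k\alpha$, which may be even, odd isotropic, or odd non-isotropic. This is precisely the delicate interplay handled via the $2\alpha$-trick and $\lie{osp}(1,2)$-representation theory in the proof of Lemma~\ref{sumnotreal}, and I expect the same techniques, applied string by string, to be what closes the iteration above.
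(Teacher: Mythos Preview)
Your reduction via Corollary~\ref{proprootstring} to bounding the bottom run is sound, and you correctly isolate the residual case $\alpha(h_\beta)=-1$ in part~(1). The gap is in how you propose to close it. The ``iteration'' step---apply Lemma~\ref{sumnotreal} to the pair $(\beta+\alpha,\alpha)$ and continue up the run---is not actually carried out, and it faces a concrete obstacle you yourself flag but do not resolve: Lemma~\ref{sumnotreal} requires \emph{both} roots to be non-isotropic, and when $\alpha$ is odd non-isotropic the root $\beta+\alpha$ is odd and may well be isotropic, so the lemma simply does not apply. Even granting non-isotropy, you do not say what the ``numerical contradiction'' is; constraining $\alpha(h_{\beta+\alpha})$ does not by itself force anything about the string length without further input. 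The paper does not iterate at all. Having reduced to $\alpha(h_\beta)=-1$ (which forces $\beta$ to be even), it shows directly that $\beta+2\alpha\notin\Delta^{\mathrm{re}}$: assuming the contrary and using $\beta(h_\alpha)\le -4$, one has $\beta+3\alpha\in\Delta$; reflecting gives $s_\beta(\beta+3\alpha)=2\beta+3\alpha$ and then $s_\alpha(2\beta+3\alpha)=2\beta+r\alpha$ with $r=-2\beta(h_\alpha)-3\ge 5$, so by unbrokenness $2\beta+4\alpha=2(\beta+2\alpha)\in\Delta$; but $\beta+2\alpha$ is an even real root, hence has no double, a contradiction. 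This single reflection trick replaces your entire iteration.

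For part~(2) your route is valid in principle but much heavier than needed. The paper does not invoke Remark~\ref{remafftw} or any classification: it simply passes to the leftmost root $\beta'$ of the string (which is real by Corollary~\ref{proprootstring}) and applies part~(1) whenever $\beta'$ is non-isotropic. In the remaining case $\beta'$ is isotropic with $\beta'+\alpha\in\Delta^{\mathrm{re}}$, and a two-line computation with the invariant form shows $\beta'(h_\alpha)\in\{0,-1,-2\}$, so the string has at most three elements. Your reduction to the Kac--Moody algebra $\lie g(B)$ in the ``no isotropic root'' case is therefore unnecessary (this case is already covered by part~(1)), and your appeal to the explicit finite/affine tables in the other case, while workable, is exactly what the short form computation avoids.
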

\begin{proof} By \eqref{scalarmultiple}, we can suppose that $\alpha$ and $\beta$ are not scalar multiples of each other.
    First we prove part $(1).$ We have $\alpha(h_\beta)\le 0$ and $\beta(h_\alpha)\le 0$ again by using the representation theory of $\mathfrak{sl}_2$ or $\mathfrak{osp}(1,2)$. The result is immediate if $\beta(h_\alpha)\geq -3$. Therefore we may assume that $\beta(h_\alpha)\le -4.$ If $\alpha(h_\beta)<-1,$ the result follows from Lemma~\ref{sumnotreal} ($\alpha+\beta$, $s_{\alpha}(\alpha+\beta)$ are imaginary) and Corollary~\ref{proprootstring}. Thus it remains to consider the case $\beta(h_\alpha)\le -4$ and $\alpha(h_\beta)=-1$. Note that the condition $\alpha(h_\beta)=-1$ implies that $\beta$ is an even root. Using Corollary~\ref{proprootstring}, it suffices to show that $\beta+2\alpha\notin \Delta^{\mathrm{re}}$. So suppose by contradiction that $\beta+2\alpha\in \Delta^{\mathrm{re}}.$ Since $\beta(h_\alpha)\le -4,$ it follows that $\beta+3\alpha\in \Delta$ and $r:=-2\beta(h_\alpha)-3\geq 5$. The following identities
    $$s_{\beta}(\beta+3\alpha)=2\beta+3\alpha,\ \ s_\alpha(2\beta+3\alpha)=2\beta+r\alpha$$
    imply that $2\beta+5\alpha\in \Delta.$ Since $\beta+2\alpha$ is an even real root, we have $2\beta+4\alpha\notin \Delta.$ But this contradicts the fact that the root string $\mathbb{S}(2\beta+3\alpha,\alpha)$ is unbroken.

    We now prove part (2). Let $\beta':=\beta-p\alpha\in \mathbb{S}(\beta,\alpha)$ be the left most root in the string, which is real by Corollary~\ref{proprootstring}. Since $\mathbb{S}(\beta,\alpha)=\mathbb{S}(\beta',\alpha)$, it suffices by part (1) to consider the case where $\beta'$ is an isotropic real root such that $\alpha + \beta' \in \Delta^{\mathrm{re}}$ (otherwise we are done again by Corollary~\ref{proprootstring}). In particular, it will be enough to show that $|\beta'(h_\alpha)| \le 2$.
    If $\alpha+\beta'$ is isotropic, then $(\alpha + \beta', \alpha + \beta') = 0$, which gives:
    $$\beta'(h_\alpha)=\frac{2(\alpha,\beta')}{(\alpha,\alpha)}=-1.$$
    If $\alpha + \beta'$ is non-isotropic, then $\alpha(h_{\alpha + \beta'})$ must lie in $\mathbb{Z}$. Thus
    $$\alpha(h_{\alpha+\beta'})=\frac{2(\alpha+\beta',\alpha)}{(\alpha+\beta',\alpha+\beta')}=\frac{2+\beta'(h_\alpha)}{1+\beta'(h_\alpha)}=1+\frac{1}{1+\beta'(h_\alpha)}.$$
    The expression on the right takes integer values only when $\beta'(h_\alpha) \in\{0,-2\}$. Therefore, in all cases we have $|\beta'(h_\alpha)| \le 2$. This completes the proof.
\end{proof}
A similar computation gives the following remark.
\begin{rem}\label{remintiso} Let $\lie g$ be symmetrizable and  $\alpha,\beta\in\Delta^{\mathrm{re}}$ be such that $\beta$ is isotropic and $\alpha$ is non-isotropic. Moreover, let $k\in \mathbb{Z}\backslash\{0\}$ be such that $\beta+k\alpha$ is a real root. We have
    $$\beta(h_\alpha)\in\begin{cases}
    \{-k\} & \text{ if } \beta+k\alpha \text{ is isotropic},\\
   \{0,-2k\} & \text{ if } \beta+k\alpha \text{ is non-isotropic}.
\end{cases}$$
\begin{proof}
Note that 
$$\beta+k\alpha \ \text{ is isotropic } \iff (\beta+k\alpha,\beta+k\alpha)=0 \iff \beta(h_\alpha)=-k.$$ Now assume that $\beta+k\alpha$ is non-isotropic. Hence we have $\beta(h_\alpha)\neq -k$ and 
$$\alpha(h_{\beta+k\alpha})=\frac{1}{k}+\frac{1}{\beta(h_\alpha)+k}\in \bz.$$
This implies $|k|=|\beta(h_\alpha)+k|$ and the claim follows.
\end{proof} 
\end{rem}

\subsection{} In this subsection we complete the root string picture in the case when $\lie g$ is symmetrizable. Recall from Remark~\ref{finaffine} that $\lie g$ is either finite, affine or twisted affine type if $\lie g$ has at least one simple isotropic root and we will work with the explicit description of roots. Given a root of the form $\gamma+r\delta$, we call $\gamma$ its finite part.

\begin{lemma}
Let $\lie g$ be symmetrizable and $\alpha$ be a real isotropic root and $\beta$ be a root. Then $|\mathbb{S}(\beta,\alpha)\cap\Delta^{\mathrm{re}}|\le 2.$ 
Moreover, if the finite part of $\beta$ is not a scalar multiple of the \textit{finite part} of $\alpha,$ then $\mathbb{S}(\beta,\alpha)\subseteq\{\beta-\alpha,\beta,\beta+\alpha\}.$
\end{lemma}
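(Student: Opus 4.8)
The plan is to invoke Remark~\ref{remafftw}: since $\alpha$ is a real isotropic root, $\lie g$ possesses an isotropic root and is therefore of finite, affine, or twisted affine type, so we may argue with the explicit root data. By Lemma~\ref{lemimroots} the root $\alpha$ is odd, and writing $\alpha=\bar\alpha+r_\alpha\delta$, its finite part $\bar\alpha$ is a non-zero isotropic root of the underlying finite subsystem $\dot\Delta$ (non-zero because the imaginary roots are exactly the non-zero multiples of $\delta$, which have vanishing finite part). For every $k$ the finite part of $\beta+k\alpha$ is $\bar\beta+k\bar\alpha$; from the explicit tables a root is real precisely when its finite part lies in $\dot\Delta$, and imaginary precisely when its finite part is $0$. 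Thus the whole argument reduces to controlling the $\bar\alpha$-string through $\bar\beta$ inside the finite set $\dot\Delta$.

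First I would dispose of the proportional case $\bar\beta\in\bc\bar\alpha$. Then each finite part $\bar\beta+k\bar\alpha$ is a multiple of $\bar\alpha$, and since $\bar\alpha$ is odd isotropic, the only multiples lying in $\dot\Delta$ are $\pm\bar\alpha$ (Remark~\ref{triple}(1), applied in the finite system: an odd isotropic root admits no non-trivial proportional root). As $k\mapsto\bar\beta+k\bar\alpha$ is injective, at most one value of $k$ yields finite part $\bar\alpha$ and at most one yields $-\bar\alpha$; hence $\mathbb{S}(\beta,\alpha)$ contains at most two real roots in this case.

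For the complementary, non-proportional case $\bar\beta\notin\bc\bar\alpha$, the key input is the finite-dimensional statement that an isotropic $\bar\alpha$-string through a non-proportional root is short: $\bar\beta+k\bar\alpha\in\dot\Delta$ forces $k\in\{-1,0,1\}$. This I would verify directly from the explicit root systems of the basic classical Lie superalgebras (and of their affinizations in the twisted cases), ruling out $|k|\ge 2$ with the relation $(\mathrm{ad}\, x_\alpha^\pm)^2=0$ from \eqref{dreir}; this case check is the main technical obstacle. Granting it, for $|k|\ge 2$ the element $\beta+k\alpha$ has finite part $\bar\beta+k\bar\alpha\notin\dot\Delta$ and, being non-zero, cannot be the finite part of an imaginary root either, so $\beta+k\alpha$ is not a root at all. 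This establishes the inclusion $\mathbb{S}(\beta,\alpha)\subseteq\{\beta-\alpha,\beta,\beta+\alpha\}$.

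It remains to bound the number of real roots by two in this case. Since $\bar\beta\neq 0$, the root $\beta$ itself is real, and each of $\beta+\alpha$ and $\beta-\alpha$ (whose finite parts $\bar\beta\pm\bar\alpha$ are non-zero) is real whenever it is a root. By Corollary~\ref{finaffine}(3), applied to the real roots $\alpha$ (isotropic) and $\beta$, the conditions $\alpha+\beta\in\Delta^{\mathrm{re}}$ and $\alpha-\beta\in\Delta^{\mathrm{re}}$ cannot both hold; equivalently, at most one of $\beta+\alpha$ and $\beta-\alpha$ is a root. Hence $\mathbb{S}(\beta,\alpha)$ consists of $\beta$ together with at most one of $\beta\pm\alpha$, giving $|\mathbb{S}(\beta,\alpha)\cap\Delta^{\mathrm{re}}|\le 2$ and completing the proof.
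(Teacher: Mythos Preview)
Your overall strategy coincides with the paper's: reduce to finite or (twisted) affine type via Remark~\ref{remafftw}, run a case analysis on the explicit root data to bound the string, and then invoke Corollary~\ref{finaffine}(3) to cap the number of real roots at two. The difference is that the paper carries out the case analysis directly in the affine root system (keeping track of the $\delta$-coefficient), whereas you project everything to the finite part $\dot\Delta$ and try to argue there.

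That projection is where your argument breaks in the twisted types. Your key combinatorial claim is that $\bar\beta+k\bar\alpha\in\dot\Delta$ with $\bar\beta\notin\bc\bar\alpha$ forces $|k|\le 1$. This is already false for $A(2k,2\ell)^{(4)}$: the set of finite parts $\dot\Delta$ there contains both $\pm 2\epsilon_i$ and $\pm 2\delta_p$, and with $\bar\alpha=\epsilon_i-\delta_p$ and $\bar\beta=2\epsilon_i$ one has $\bar\beta-2\bar\alpha=2\delta_p\in\dot\Delta$, so $k=-2$ occurs at the level of finite parts. What actually excludes $\beta-2\alpha$ from $\Delta$ is the congruence condition on the $\delta$-coefficient: writing $\alpha=\epsilon_i-\delta_p+2r\delta$ and $\beta=2\epsilon_i+(4s+2)\delta$ gives $\beta-2\alpha=2\delta_p+(4s+2-4r)\delta$, whose $\delta$-coefficient lies in $4\bz+2$ while the roots of shape $\pm 2\delta_p+t\delta$ require $t\in 4\bz$. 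The paper's case analysis keeps exactly this information; your reduction to $\dot\Delta$ discards it. The appeal to $(\mathrm{ad}\, x_\alpha^\pm)^2=0$ does not close the gap either: it shows $[x_\alpha^+,[x_\alpha^+,x_\gamma^+]]=0$, but when $\gamma+\alpha$ is imaginary the root space $\lie g_{\gamma+\alpha}$ need not be spanned by $[x_\alpha^+,x_\gamma^+]$, so you cannot conclude $\gamma+2\alpha\notin\Delta$ without further input. In short, for twisted types you must work with the full affine root datum as the paper does; the finite-part shortcut is not enough.
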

\begin{proof} Since $\alpha$ is real isotropic, $\Delta$ must contain also a simple isotropic root. The proof will be a case-by-case analysis using the explicit description of $\Delta$ (see discussion above). We can assume that $\beta$ and $\alpha$ are not scalar multiples of each other; otherwise the statement is clear from \eqref{scalarmultiple}. We outline the proof for the case $A(2k,2\ell)^{(4)}$. The set of roots is given by 
\begin{equation}\label{rootstwisted}
    \begin{aligned}
        \Delta=& \{\pm\epsilon_i\pm\epsilon_j+2r\delta, \pm\delta_p\pm\delta_q+2r\delta, \pm \epsilon_i+r \delta, \pm\delta_{p}+r\delta, \pm 2\epsilon_i+(4r+2)\delta, \pm 2\delta_{p}+4r \delta\\ 
     & \pm\epsilon_i\pm\delta_p+2r\delta:r\in \bz, 1\le i\neq j\le k, 1\le p\neq q\le \ell\}\cup(\bz\backslash \{0\})\delta.
    \end{aligned}
\end{equation}
where the roots of the form $\pm\epsilon_i\pm\delta_p+2r\delta$ are isotropic. Let $\alpha=\lambda_i\epsilon_i+\lambda_p\delta_p+2r\delta$ be an isotropic root with $\lambda_i,\lambda_p\in \{\pm 1\}.$ If $\beta$ is an imaginary root and $\beta+t\alpha\in \Delta$ for some $t\neq 0$, then using \eqref{rootstwisted}  we obtain that $t\in \{\pm 1\}$ which gives the claim of the lemma in this case. Similarly, a direct calculation shows the claim if the finite part of $\beta$ is a scalar multiple of the finite part of $\alpha$. So assume for the rest of the proof that $\beta$ is real and the finite part of $\beta$ is not a scalar multiple of the finite part of $\alpha$.

In this situation, it will be enough to show $\mathbb{S}(\beta,\alpha)\subseteq\{\beta-\alpha,\beta,\beta+\alpha\}$ by Corollary~\ref{finaffine}(3).

\textbf{Case 1:} Let $\beta$ be of the form $\beta=\mu_s\epsilon_s+\mu_t\epsilon_t+a\delta\in \Delta$ for some $1\le s\neq t\le k$, $a\in \bz$ and $\mu_s,\mu_t\in \{0,\pm 1,\pm 2\}.$ We assume without loss of generality that $\mu_s\neq 0.$ Let $m\in \bz\backslash\{0\}$ such that
$$\beta+m\alpha=\mu_s\epsilon_s+\mu_t\epsilon_t+m\lambda_i\epsilon_i+m\lambda_p\delta_p+(a+2mr)\delta\in \Delta.$$
Since $\lambda_p\neq 0,$ the description \eqref{rootstwisted} guarantees that $|m|\le 2.$ If $\mu_s\epsilon_s+\mu_t\epsilon_t+m\lambda_i\epsilon_i\neq 0$, then we must have $m\lambda_p=\pm 1$, which forces $|m|\le 1.$ If $\mu_s\epsilon_s+\mu_t\epsilon_t+m\lambda_i\epsilon_i= 0,$ then $\mu_t=0$ implying $\mu_s+m\lambda_i=0$ and $\mu_s=-m\lambda_i.$ If $|m|=2,$ then $|\mu_s|=2$ and again \eqref{rootstwisted} implies $a\in 4\bz+2.$ This gives $a + 2mr \in 4\mathbb{Z}+2$, contradicting $\beta+m\alpha \in \Delta$.
Therefore, in all the cases we have $|m|\le 1.$ Similar arguments will imply $|m|\le 1$ if $\beta=\mu_p\delta_p+\mu_q\delta_q+b\delta\in \Delta$.

\textbf{Case 2:} Now assume that $\beta$ is of the form $\beta=\mu_j\epsilon_j+\mu_q\delta_q+2s\delta$ with $\mu_j,\mu_q\in\{\pm 1\}$. Again let $m\in\bz\backslash\{0\}$ be such that
$$\beta+m\alpha=\mu_j\epsilon_j+m\lambda_i\epsilon_i+\mu_q\delta_q+m\lambda_p\delta_p+2(s+mr)\delta\in \Delta.$$
Note that if $\mu_q\delta_q+m\lambda_p\delta_p=0$ or $\mu_j\epsilon_j+m\lambda_i\epsilon_i=0,$ then clearly $|m|\le 1$ (both of them cannot be zero since the finite parts are not proportional). Therefore we can assume that both the quantities are non-zero. This forces $j=i,q=p$ and $\mu_q+m\lambda_p,\mu_j+m\lambda_i\in \{\pm 1\}$, which in turn implies $|m|\le 2.$  However, if $|m|=2$, then the finite parts of $\alpha$ and $\beta$ are scalar multiples of each other. To see this, assume $m=2$. Then $\mu_q=\pm 1 -2\lambda_p$ and $\mu_j=\pm 1-2\lambda_i.$ Since $\lambda_i,\lambda_p,\mu_j,\mu_q\in \{\pm 1\},$ it follows that $\lambda_i=-\mu_j$ and $\lambda_p=-\mu_q$. The case $m = -2$ is analogous. This completes the proof of the lemma.
\end{proof}
\section{Root generated subalgebras and \texorpdfstring{$\pi$}{pi}-systems}\label{secrgs}
In this section, we study root generated subalgebras arising from $\pi$-systems and determine their sets of real roots. This will allow us to generalize \cite[Theorem~1]{idv2023root} (and hence Dynkin’s bijection mentioned in the introduction) to the super setting. The study of such subalgebras is important, as every graded embedding (more precisely, its derived algebra) is isomorphic to a root generated subalgebra defined by a $\pi$-system.
\textit{Throughout this section, we assume that $\lie g$ is a symmetrizable, quasisimple, regular Kac-Moody superalgebra.} 

\subsection{}\label{secpi} We study one of our main combinatorial object, namely the $\pi$-systems and see how they arise in the study of closed subroot systems. We begin with the following definition. For similar study for symmetrizable Kac-Moody algebras, we refer to \cite{idv2023root}.
\begin{definition}\label{defpi}
    A non-empty subset $\Sigma\subseteq \Delta^{\mathrm{re}}$ is called a \textit{$\pi$-system} if 
    \begin{enumerate}
        \item $\alpha-\beta\notin \Delta$ for all $\alpha,\beta\in \Sigma$
        \item $\alpha\notin \sum_{\beta\in\Sigma\backslash\{\alpha\}} \mathbb{Q}_{\geq 0}\beta$ for all $\alpha\in \Sigma.$
    \end{enumerate}
    Furthermore, for $S\subseteq \Delta^{\mathrm{re}}$, we define inductively 
    $$S_{0}=(S\cup2S)\cap\Delta^{\mathrm{re}},\ \ S_k=\pm\{s_\alpha(\beta):\alpha,\beta\in S_{k-1}\},\ \ k\in \bn,\ \ S_{\infty}=\displaystyle{\cup_{k=0}^\infty} S_k.$$
We say that a closed subroot system $\Psi$ admits a $\pi$-system if there exists a $\pi$-system $\Sigma\subseteq \Delta^{+}$ such that $\Psi=\Sigma_\infty.$
\end{definition}
A construction similar to $S_{\infty}$ appeared in \cite{gorelik17generalized} in the context of studying subroot systems of generalized reflection root systems.
\begin{rem}\label{rempi}
    Note that the condition in Definition~\ref{defpi}(2) is redundant for all non-isotropic roots. Indeed, if $\alpha\in\Sigma$ is non-isotropic, then for every $\beta \in \Sigma \backslash \{\alpha\}$ we have $\beta(h_\alpha) \le 0$. If $\Sigma$ contains only non-isotropic roots, then we have $\Sigma_\infty=W_\Sigma(\Sigma_0)$, where $W_\Sigma=\langle s_{\beta}: \beta \in \Sigma\rangle $.
\end{rem}
By \cite[Theorem 1]{idv2023root}, every root generated subalgebra of a symmetrizable Kac-Moody algebra is generated by a $\pi$-system in $\Delta^{+}$. Moreover, every closed subroot system admits a $\pi$-system \cite[Proposition 3.3]{idv2023root}. The following example shows that such statements fail in the super setting.
\begin{example}\label{thenonexample1}
Let $\mathring{\lie g}$ be a finite dimensional basic classical simple Lie superalgebra and $\lie g$ be the corresponding (untwisted) affine Lie superalgebra. Let $\alpha$ be an isotropic root of $\mathring{\lie g}.$ Let $\Psi=\{\pm \alpha,\pm(\alpha+\delta)\}.$ Then $\Psi$ is a closed subroot system and (modulo the centre) we have $$\lie g(\Psi)=\bc x_\alpha^\pm\oplus \bc(x_\alpha^\pm\otimes  t^{\pm 1})\oplus \bc (h_\alpha\otimes t^{\pm 1})\oplus \bc h_\alpha.$$ 
Thus, $\lie g(\Psi)$ is generated by the $\pi$-system $\Sigma'=\{-\alpha,\alpha+\delta\}$ and 
$$\Delta(\lie g(\Psi))^{\mathrm{re}}=\Psi.$$ 
However, $\Psi$ does not admit a $\pi$-system and $\lie g(\Psi)$ is never generated by a $\pi$-system $\Sigma$ contained in $\Delta^+$ (every such subset has to satisfy $|\Sigma|=1$). 
\end{example}

\subsection{} For closed subroot systems, we will construct natural subsets similar as in \cite{deodhar1989note}. Let $\Psi\subseteq \Delta^{\mathrm{re}}$ be a closed subroot system and set $\Psi^+=\Psi\cap\Delta^+.$ Consider the preorder (a relation that is reflexive and transitive) on $\Psi^+$ defined by
    $$\gamma_1\preceq \gamma_2 \iff \gamma_2=a\gamma_1+\sum_{\tau\in \Psi^+\backslash{\{\gamma_1,\gamma_2\}}}a_{\tau}\tau,\ \ \text{for some } a\in \mathbb{Q}_{>0},\ a_{\tau}\in\mathbb{Q}_{\geq 0}.$$
Let $\Pi(\Psi)$ be the set of elements satisfying $$\Pi(\Psi)=\{\alpha\in \Psi^+: \gamma\in \Psi^+\text{ and } \gamma\le \alpha\implies \gamma\in \bn\alpha\}.$$
In Example~\ref{thenonexample1} we have $\Pi(\Psi)=\{\alpha,\alpha+\delta\}$ and hence $\Pi(\Psi)$ is not a $\pi$-system in general.
However, we claim the following
\begin{equation}\label{weakpisystem}
\alpha\neq \beta\in\Pi(\Psi),\ \alpha-\beta\in\Delta\implies \alpha-\beta\in \Delta^{\mathrm{im}},\ \text{$\alpha,\beta$ isotropic}
\end{equation}
To see this, let $\alpha\neq \beta\in\Pi(\Psi)$ be such that $\alpha-\beta$ is a root. If $\alpha-\beta$ is real, is must be contained in $\Psi$ by closedness. Hence $\alpha\preceq \beta$ (if $\beta-\alpha\in \Psi^+$) or $\beta\preceq \alpha$ (if $\alpha-\beta\in \Psi^+$), which is a contradiction. Hence $\alpha-\beta$ is imaginary. Assume that $\alpha$ is non-isotropic. In this case 
Corollary~\ref{proprootstring} gives $s_\alpha(\beta)=\beta-k\alpha\in \Psi$ for some $k\in \bn$. If $s_\alpha(\beta)\in \Psi^+$ (resp. $s_\alpha(\beta)\in -\Psi^+$), then $\alpha\preceq\beta$ (resp. $\beta\preceq \alpha$). Hence we get once more a contradiction.
\begin{rem}\label{remdeodhar}
    The same proof as in \cite[Section 3]{deodhar1989note} using Remark~\ref{rempi} and Example~\ref{examnoniso} shows $\Pi(\Psi)\neq \emptyset$ if $\Psi$ contains at least one non-isotropic root and $\Pi(\Psi)_\infty=\Psi$, if all roots in $\Psi$ are non-isotropic.
\end{rem}
The next lemma shows that the first statement always holds, while the example below demonstrates that the second statement fails in general.
\begin{lemma}\label{lempipsi} For a closed subroot system $\Psi$, we have $\Pi(\Psi)\neq \emptyset.$ 
\begin{proof}
 By Remark~\ref{remdeodhar}, we can assume that all roots in $\Psi$ are isotropic. Since $\Psi$ is closed, by Lemma~\ref{lemimroots}(2), we must have $\gamma_1\pm \gamma_2\notin \Delta^{\mathrm{re}}$ for $\gamma_1,\gamma_2\in \Psi$.
 We treat the finite and affine cases uniformly by realizing a finite-type Kac-Moody superalgebra $\lie g$ as a subalgebra of its untwisted affinization $\lie g^{(1)}$. Due to the different behavior of isotropic roots in different types, we shall prove the result by case by case analysis. Let $r\in\mathbb{Z}_{+}$ be minimal such that $\alpha=\alpha_0+r\delta\in\Psi^+$, where $\alpha_0$ denotes the finite part of $\alpha$. If $\lie g$ is of finite type, then $r=0$ and $\alpha\in\Psi^+$ can be chosen arbitrarily. We claim that $\alpha\in\Pi(\Psi)$. Assume, for contradiction, that $\alpha\notin\Pi(\Psi)$. Then there exists an integer $t\geq 2$ such that $\alpha$ can be written as
\begin{equation}\label{funnyeq1}
\alpha = \sum_{i=1}^t a_i \beta_i,\quad\text{ where }\ \ a_i\in\mathbb{Q}_{>0},\ \beta_i\in\Psi^+\backslash\{\alpha\}.
\end{equation}
    If $\lie g$ is of affine type, we shall show that $\alpha-\beta_p\in\Delta^{\mathrm{im}}$ for some $1\le p\le t$.
    \begin{enumerate}[leftmargin=*]
        \item \textbf{Case 1.} Suppose that $\lie g$ is neither of type $F(4)$, $G(3)$, or $D(2,1;a)$, nor an untwisted affinization of these types. Then there exist $\lambda_i,\lambda_j\in\{\pm 1\}$ such that $\alpha_0=\lambda_i\epsilon_i+\lambda_j\delta_j$ and 
        $$\alpha=\lambda_i\epsilon_i+\lambda_j\delta_j+r\delta.$$
Since $(\alpha,\epsilon_i)\neq 0$, the set $J:=\{k: 1\le k\le t,\ (\beta_k,\epsilon_i)\neq 0\}$ is non-empty. Moreover, since each $\beta_k$ appearing in \eqref{funnyeq1} is isotropic, for every $k\in J$ there exist $c_{i_k},c_{m_k}\in\{\pm 1\}$ and $t_k\in\mathbb{Z}_{\ge 0}$ such that
    $$\beta_k=c_{i_k}\epsilon_i+c_{m_k}\delta_{m_k}+t_k\delta.$$
     Therefore, using \eqref{funnyeq1}, we obtain
     $$(\alpha,\epsilon_i)=\sum_{k\in J}a_k(\beta_k,\epsilon_i)\quad \Longrightarrow \quad \lambda_i=\sum_{k\in J}a_kc_{i_k}.$$
     If $c_{i_k}=-\lambda_i$ for all $k\in J$, then $-\sum_{k\in J}a_k=1$, a contradiction. Thus there exists $p$ such that $c_{i_p}=\lambda_i$, i.e.,
     \begin{equation}\label{betap}
         \beta_p=\lambda_i\epsilon_i+c_{m_p}\delta_{m_p}+t_p\delta.
     \end{equation} 
    Since $\alpha\pm\beta_p\notin\Delta^{\mathrm{re}}$, it follows that if $\lie g$ is of finite type, then ($t_p=0$ and) necessarily $\alpha=\beta_p$. Hence, for the remainder of this case we assume that $\lie g$ is of affine type and claim that $\alpha-\beta_p\in\Delta^{\mathrm{im}}$. It suffices to show that $\alpha-\beta_p\in\Delta$. First suppose that $\lie g$ is untwisted. Then $\alpha-\beta_p\notin\Delta$ would force $\alpha=\beta_p$, which is impossible. Thus $\alpha-\beta_p\in\Delta$ in this case. If $\lie g$ is twisted (not $A(2k,2l)^{(4)}$) and $\alpha-\beta_p\notin\Delta$, then necessarily $$j=m_p,\ c_{m_p}=-\lambda_j,\ \text{ and $r$ and $t_p$ have different parities}.$$ Therefore, $\beta_p=\lambda_i\epsilon_i-\lambda_j\delta_j+t_p\delta$, which implies $\alpha+\beta_p\in\Delta^{\mathrm{re}}$, a contradiction. Finally, let $\lie g$ be of type $A(2k,2l)^{(4)}$. If $\alpha-\beta_p\notin\Delta$, then we have $$j=m_p,\ c_{m_p}=-\lambda_j,\ \text{ and $(r,t_p)=(2r_1,2t_{p_1})$}$$ 
     for some $r_1,t_{p_1}$ of different parity. As in the previous case, this again implies $\alpha+\beta_p\in\Delta^{\mathrm{re}}$, a contradiction. Thus, in all affine cases we conclude that $\alpha-\beta_p\in\Delta$, and hence $\alpha-\beta_p\in\Delta^{\mathrm{im}}$.\smallskip
    \item \textbf{Case 2.} Now suppose that $\lie g$ is either of type $F(4)$, $G(3)$, or $D(2,1;a)$, or untwisted affinization of these types. Note that in this case, if $\beta=\beta_{0}+t\delta \in \Psi^{+}$ with $\beta_{0} \neq \pm \alpha_{0}$ and then we have $(\alpha_{0}+r\delta,\ \beta_{0}+t\delta) \neq 0$ which implies $\alpha-\beta\in \Delta^{\mathrm{re}}$ or $\alpha+\beta\in\Delta^{\mathrm{re}},$ which is a contradiction. Therefore, if $\lie g$ is of finite type, then $\Psi^+=\{\alpha_0\}$ and so $\alpha=\alpha_0\in \Pi(\Psi),$ which completes the proof if $\lie g$ is of finite type. If $\lie g$ of affine type, then we must have 
     $$\Psi^{+}\subseteq \{\,\pm \alpha_{0} + s\delta : s\in \mathbb{Z}_{+}\,\}.$$ 
    By comparing the coefficients of $\alpha_0$ in \eqref{funnyeq1}, it follows that there exists $p$ such that $\beta_p=\alpha_0+t_p\delta$, which implies $\alpha-\beta_p\in\Delta^{\mathrm{im}}$. 
    \end{enumerate}
    Therefore, we can assume from now on that $\lie g$ is of affine type and $\alpha-\beta_p\in\Delta^{\mathrm{im}}$. In the rest, we shall show that $\alpha\in\Pi(\Psi)$. Since $\alpha-\beta_p\in\Delta^{\mathrm{im}}$, we must have $\beta_p=\alpha_0+t_p\delta$ and the set
    $$J_1:=\{j:1\le j\le t,\ \beta_j=\alpha_0+t_j\delta\}$$ is non-empty.
    Comparing the coefficients of $\alpha_0$ and $\delta$ in \eqref{funnyeq1}, we obtain
    $$\sum_{k\in J_1}a_k\geq 1\ \text{ and }\ r\geq\sum_{k\in J_1}a_kt_k.$$ If $t_k>r$ for all $k\in J_1$, then
$$r\geq\sum_{k\in J_1}a_kt_k>\sum_{k\in J_1}a_kr\geq r$$ a contradiction. Hence there exists $k\in J_1$ such that $t_k<r$ (note that $t_k\neq r$ since $\beta_k\neq\alpha$). But then $\beta_k=\alpha_0+t_k\delta$ with $t_k<r$, contradicting the minimality of $r$ in the choice of $\alpha$. Therefore, $\alpha\in\Pi(\Psi)$ and this completes the proof.
\end{proof}
\end{lemma}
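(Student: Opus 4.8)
The plan is to first dispose of the generic situation and then concentrate on the genuinely super phenomenon. By Remark~\ref{remdeodhar}, if $\Psi$ contains at least one non-isotropic root, then Deodhar's argument applies (using the Coxeter structure of $W$ from Example~\ref{examnoniso}) and already yields $\Pi(\Psi)\neq\emptyset$, so I may assume that \emph{every} root of $\Psi$ is isotropic. Two structural observations then set the stage. First, by Remark~\ref{remafftw} the presence of an isotropic real root forces $\lie g$ to be of finite, affine or twisted affine type, so explicit root data is available; realizing a finite-type $\lie g$ inside its untwisted affinization $\lie g^{(1)}$ lets me treat finite and affine types uniformly by writing each root as (finite part) $+\,r\delta$. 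Second, since $\Psi$ is a symmetric closed subroot system containing only isotropic (hence odd) roots, while the sum of two odd real roots is even and therefore non-isotropic by Lemma~\ref{lemimroots}(2), closedness forces $\gamma_1\pm\gamma_2\notin\Delta^{\mathrm{re}}$ for all $\gamma_1,\gamma_2\in\Psi$; equivalently, any genuine root difference of elements of $\Psi$ is automatically imaginary.

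Next I would choose the candidate element. Fix any $\alpha\in\Psi^+$, write $\alpha=\alpha_0+r\delta$ with $\alpha_0$ its finite part, and take $r$ minimal among $\{s\in\bz_+:\alpha_0+s\delta\in\Psi^+\}$; the claim is $\alpha\in\Pi(\Psi)$. Suppose not. Since $2\alpha\notin\Delta$ for isotropic $\alpha$ by Remark~\ref{triple}(1), we have $\bn\alpha\cap\Psi^+=\{\alpha\}$, so the failure produces an expression $\alpha=\sum_{i=1}^t a_i\beta_i$ with $t\ge 2$, $a_i\in\mathbb{Q}_{>0}$ and $\beta_i\in\Psi^+\backslash\{\alpha\}$.

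The heart of the argument is to produce an index $p$ with $\alpha-\beta_p\in\Delta^{\mathrm{im}}$; by the structural remark this amounts to showing $\alpha-\beta_p\in\Delta$ for a suitable $\beta_p$, and since affine imaginary roots are multiples of $\delta$, it will force $\beta_p=\alpha_0+t_p\delta$. Here I expect to split into two families. For the matrix types ($A(m,n)$, $B/C/D$ and their affinizations) one has $\alpha_0=\lambda_i\epsilon_i+\lambda_j\delta_j$, and pairing the relation $\alpha=\sum a_i\beta_i$ with $\epsilon_i$ shows that $\lambda_i=\sum_{k\in J}a_k c_{i_k}$ over the indices $J$ with $(\beta_k,\epsilon_i)\neq 0$; not all $c_{i_k}$ can equal $-\lambda_i$, so some $\beta_p$ carries the summand $\lambda_i\epsilon_i$, and one then checks $\alpha-\beta_p\in\Delta$. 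For the exceptional families $F(4)$, $G(3)$, $D(2,1;a)$ and their untwisted affinizations, nondegeneracy of the form forces $\Psi^+\subseteq\{\pm\alpha_0+s\delta\}$ (any other finite part would make $\alpha\pm\beta$ a non-isotropic real root), and comparing the coefficient of $\alpha_0$ directly produces $\beta_p=\alpha_0+t_p\delta$. I expect this middle step to be the main obstacle: verifying that such a $\beta_p$ with finite part exactly $\alpha_0$ occurs (equivalently $\alpha-\beta_p\in\Delta$) is where the differences between untwisted, twisted and the $A(2k,2\ell)^{(4)}$ root systems genuinely matter, and it demands careful use of the explicit root lists together with parity constraints on the $\delta$-coefficients.

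Finally I would close the loop using minimality. Set $J_1=\{j:\beta_j=\alpha_0+t_j\delta\}$, which is non-empty by the previous step. Comparing the $\alpha_0$- and $\delta$-components in $\alpha=\sum a_i\beta_i$ gives $\sum_{k\in J_1}a_k\ge 1$ and $r\ge\sum_{k\in J_1}a_k t_k$. If every $k\in J_1$ satisfied $t_k>r$, then $r\ge\sum_{k\in J_1}a_k t_k>r\sum_{k\in J_1}a_k\ge r$, a contradiction; hence some $t_k<r$ (note $t_k\neq r$ since $\beta_k\neq\alpha$). But then $\alpha_0+t_k\delta\in\Psi^+$ with $t_k<r$ contradicts the minimality of $r$. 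Therefore $\alpha\in\Pi(\Psi)$, and in particular $\Pi(\Psi)\neq\emptyset$.
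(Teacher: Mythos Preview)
Your proposal is correct and follows essentially the same route as the paper: reduce to the all-isotropic case, pick $\alpha=\alpha_0+r\delta\in\Psi^+$ with $r$ minimal for that finite part, suppose $\alpha\notin\Pi(\Psi)$, use the pairing with $\epsilon_i$ (matrix types) or nondegeneracy (exceptional types) to locate some $\beta_p$ with finite part $\alpha_0$, and then derive the contradiction $t_k<r$ from the $J_1$-inequalities. The only portion you leave as a sketch---verifying $\alpha-\beta_p\in\Delta$ across the untwisted, twisted, and $A(2k,2\ell)^{(4)}$ cases via parity of the $\delta$-coefficients and the fallback to $\alpha+\beta_p\in\Delta^{\mathrm{re}}$---is exactly what the paper spells out in its Case~1, so your identification of that as the technical crux is accurate.
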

\begin{example}\label{thenonexample2}
    Let $\mathring{\mathfrak{g}}$ be the finite--dimensional Lie superalgebra of type $B(m,n)$, 
and let $\mathfrak{g}$ denote the corresponding untwisted affine Lie superalgebra. Fix $1\le i\neq k\le m$ and $1\le \ell \neq j\le n$ and consider
$$\Psi=\{\pm(\epsilon_i-\delta_j+6\delta),\ \pm(\epsilon_i-\delta_j+\delta),\ \pm(\epsilon_k-\delta_\ell+2\delta),\ \pm(\delta_\ell-\epsilon_k+3\delta)\}.$$
Then $\Psi$ is a closed subroot system of 
$\Delta^{\mathrm{re}}$. Moreover,
$$\Pi(\Psi)=\{\,\epsilon_i-\delta_j+\delta,\ \epsilon_k-\delta_\ell+2\delta,\ \delta_\ell-\epsilon_k+3\delta\,\}$$
is a $\pi$-system, but 
$$\Pi(\Psi)_{\infty}=\{\pm(\epsilon_i-\delta_j+\delta),\ \pm(\epsilon_k-\delta_\ell+2\delta),\ \pm(\delta_\ell-\epsilon_k+3\delta)\},$$
which does not coincide with $\Psi$.
\end{example}
\subsection{} In this subsection we address the fundamental problem of determining the real roots of root generated subalgebras defined by $\pi$-systems. We shall see in the last subsection that such subalgebras arise naturally in the context of graded embedding problems. For Kac–Moody algebras, these subalgebras have been studied extensively: a number of special cases were treated in \cite{biswas23onsymmetric, habib2024pisystems, morita1989certain, naito1992regular, roy2019maximal, viswanath08embedding}, and a complete solution in the symmetrizable setting was obtained in \cite{idv2023root}. In contrast, the analogous problem for Kac–Moody superalgebras is far less developed, with only a few works addressing special cases only, such as \cite{Nayak13Embedding,vander87regular}. We begin with the following lemma which will be needed later.
\begin{lemma}\label{lempisystem}
    Let $\Sigma\subseteq\Delta^{\mathrm{+}}$ be a $\pi$-system and let $\lie g(\Sigma)$ be the associated root generated subalgebra.
    \begin{enumerate}
        \item Every root vector of $\mathfrak g(\Sigma)$ is a linear combination of right-normed Lie words in either $\{x_\alpha^{+} : \alpha \in \Sigma\}$ or $\{x_\alpha^{-} : \alpha \in \Sigma\}$. In particular, every root of $\mathfrak g(\Sigma)$ is an integral linear combination of elements of $\Sigma$ with coefficients of the same sign. \vspace{0,1cm}
        \item Let $\Psi=\Delta(\lie{g}(\Sigma))^{\mathrm{re}}$. Then we have $\Sigma=\Pi(\Psi).$ \vspace{0,1cm}
         \item Let $\alpha \in \Sigma$ be non-isotropic and let $\beta \in \Delta(\mathfrak g(\Sigma))^{\mathrm{re},+}$. \vspace{0,1cm}
        \begin{enumerate}
            \item Assume that either $\bigl|\{\beta - k\alpha : k \in \mathbb{Z}_+\} \cap \Delta^{\mathrm{im}}\bigr| \ge 1$
    or $\bigl|\{\beta - k\alpha : k \in \mathbb{Z}_+\} \cap \Delta^{\mathrm{re}}\bigr| \ge 3.$
    Then $\beta(h_\alpha) \in \mathbb{N}$.\vspace{0,1cm}
    \item Let $\beta \notin \mathbb{N}\alpha$ and $\beta(h_\alpha) \in \mathbb{N}$. Then $s_\alpha(\beta) \in \Delta(\mathfrak g(\Sigma))^{\mathrm{re},+}$.
        \end{enumerate}
    \end{enumerate}
\end{lemma}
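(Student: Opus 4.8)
The plan is to treat the four assertions in order, with part (1) supplying the triangular decomposition on which the combinatorics of (2) and (3b) rest, and with the root–string theory of Section~\ref{secrootstring} driving (3a).

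For part (1), I would first record that the $\pi$-system condition (1) forces $[x_\alpha^+,x_\beta^-]\in\lie g_{\alpha-\beta}=0$ for distinct $\alpha,\beta\in\Sigma$, while $[x_\alpha^+,x_\alpha^-]=h_\alpha$. Thus the generators $x_\alpha^\pm$ ($\alpha\in\Sigma$) satisfy exactly the bracket relations of a contragredient realization, and the standard triangular-decomposition argument (sorting iterated brackets into PBW order) gives $\lie g(\Sigma)=\mathfrak n^-\oplus\mathfrak h_\Sigma\oplus\mathfrak n^+$, where $\mathfrak n^{\pm}$ is generated by $\{x_\alpha^\pm\}$ and carries a grading by $\pm\mathbb{Z}_{\geq 0}\Sigma$. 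Since a free Lie superalgebra is spanned in each multidegree by right-normed monomials, every element of $\mathfrak n^{\pm}$ is a combination of right-normed words in the $x_\alpha^{\pm}$; the statement about the shape of roots then follows because the $\mathbb{Z}\Sigma$-grading places every nonzero root space of $\lie g(\Sigma)$ entirely inside $\mathfrak n^+$ or $\mathfrak n^-$, giving the common sign of the coefficients.

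For part (2), I would prove the two inclusions separately. For $\Sigma\subseteq\Pi(\Psi)$, fix $\alpha\in\Sigma$ and suppose $\gamma\in\Psi^+$ with $\gamma\preceq\alpha$ but $\gamma\notin\mathbb{N}\alpha$. Writing $\alpha=a\gamma+\sum_\tau a_\tau\tau$ and expanding each summand over $\Sigma$ via part (1) yields $\alpha=\sum_\mu d_\mu\mu$ with $d_\mu\geq 0$ and $d_{\mu_0}>0$ for some $\mu_0\neq\alpha$ (because $\gamma\notin\mathbb{N}\alpha$ already has a positive coefficient off $\alpha$). Moving $d_\alpha\alpha$ to the left and comparing simple-root coordinates of $\lie g$: if $d_\alpha\geq 1$ both sides must vanish, contradicting $d_{\mu_0}>0$; if $d_\alpha<1$ we obtain $\alpha\in\sum_{\mu\neq\alpha}\mathbb{Q}_{\geq 0}\mu$, contradicting the $\pi$-system condition (2). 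For $\Pi(\Psi)\subseteq\Sigma$, take $\alpha\in\Pi(\Psi)$ and expand $\alpha=\sum_\mu c_\mu\mu$ over $\Sigma$ by part (1); each $\mu$ with $c_\mu>0$ satisfies $\mu\preceq\alpha$, hence $\mu\in\mathbb{N}\alpha$, and then $1=\sum c_\mu k_\mu$ with $c_\mu,k_\mu\in\mathbb{N}$ forces a single term $\alpha=\mu\in\Sigma$.

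For part (3a), I would analyze $\mathbb S(\beta,\alpha)$ through its block shape from Corollary~\ref{proprootstring} (a run of $p'$ real roots, then $q'$ imaginary, then $r'$ real, with $p'=r'$ when $q'\neq0$), combined with Proposition~\ref{unbrokenrs}(1) (the roots form an unbroken interval $\{\beta-p\alpha,\dots,\beta+q\alpha\}$ with $p-q=\beta(h_\alpha)$) and Proposition~\ref{prop32} (at most four real roots, so $p'\leq 2$ whenever $q'\neq0$). Placing $\beta$ at position $p$ from the bottom, the hypothesis—an imaginary root at some position $\leq p$, or at least three real roots at positions $\leq p$—forces $\beta$ into the top real block, i.e. $p\geq p'+q'$: indeed an imaginary root below $\beta$ needs $q'\neq0$ and $p\geq p'$, and since $\beta$ is real it cannot sit in the imaginary block; while three real roots below would, if $\beta$ were in the bottom block, give $p'\geq 3$, impossible once $q'\neq0$, and if $q'=0$ the four-root bound already yields $p>q$ directly. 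A short count then gives $q\leq p'-1<p'+q'\leq p$, hence $\beta(h_\alpha)=p-q>0$, i.e. $\beta(h_\alpha)\in\mathbb{N}$. Part (3b) is then immediate: $-\alpha\in\Delta(\lie g(\Sigma))$ since $\alpha\in\Sigma$, so Proposition~\ref{basiclem}(3) gives $s_\alpha(\beta)=\beta-\beta(h_\alpha)\alpha\in\Delta(\lie g(\Sigma))^{\mathrm{re}}$ (and it is nonzero as $\beta\notin\mathbb{N}\alpha$); since $\beta\notin\mathbb{N}\alpha$ its $\Sigma$-expansion has a positive coefficient off $\alpha$, so by the sign constraint of part (1) this real root must be positive.

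I expect the main obstacle to be the position-counting in part (3a), where one has to synchronize the three-block shape of the string, the symmetry $p-q=\beta(h_\alpha)$, and the bound of four real roots, and in particular verify that the ``three real roots below'' hypothesis is incompatible with $\beta$ lying in the bottom block once $q'\neq0$. Getting the triangular decomposition of part (1) correct in the $\mathbb{Z}_2$-graded setting (so that right-normed words genuinely span and the grading separates positive from negative roots) is the other point that needs care.
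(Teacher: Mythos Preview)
Your arguments for parts (1), (2) and (3a) are correct and match the paper's proof in spirit and in the tools invoked (the triangular decomposition from the $\pi$-system relations, the two inclusions for $\Sigma=\Pi(\Psi)$, and the root-string shape from Corollary~\ref{proprootstring} combined with Propositions~\ref{unbrokenrs} and~\ref{prop32}). For $\Pi(\Psi)\subseteq\Sigma$ you argue directly while the paper proves the contrapositive, but both are equally short.

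The one genuine gap is in part (3b). You write that since $\beta\notin\mathbb{N}\alpha$, the $\Sigma$-expansion of $s_\alpha(\beta)=\beta-\beta(h_\alpha)\alpha$ inherits a positive coefficient at some $\mu_0\neq\alpha$, ``so by the sign constraint of part (1) this real root must be positive''. But part (1) only guarantees that \emph{some} $\Sigma$-expansion of $s_\alpha(\beta)$ has coefficients of the same sign; it does not say that the particular expansion you wrote down is that one. When $\Sigma$ is linearly dependent (which the lemma allows), $s_\alpha(\beta)$ could admit both your mixed-sign expression and a separate non-positive one, and you have not yet excluded this. To close the gap you must still invoke the $\pi$-system condition~(2): if $-s_\alpha(\beta)=\sum_\mu e_\mu\mu$ with $e_\mu\ge 0$, then $\beta(h_\alpha)\alpha=\beta+\sum_\mu e_\mu\mu$, and after moving the $\alpha$-term across one either gets a non-trivial $\mathbb{Q}_{\ge 0}$-relation among positive roots (impossible in $\mathbb{Z}_{\ge 0}\Pi$) or an expression $\alpha\in\sum_{\mu\neq\alpha}\mathbb{Q}_{\ge 0}\mu$, contradicting Definition~\ref{defpi}(2). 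The paper handles exactly this step, but phrases it via the preorder: it shows $\beta\preceq\alpha$ and then uses part~(2), namely $\alpha\in\Sigma=\Pi(\Psi)$, to force $\beta\in\mathbb{N}\alpha$. So the obstacle turns out to lie in (3b), not in the position-counting of (3a) that you flagged.
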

\begin{proof}
The proof of part (1) follows directly from the fact that $\Sigma$ is a $\pi$-system and the super Jacobi identity. 
For part (2), let $\beta \in \Psi^{+} \setminus \Sigma$ and write
$$\beta=c_1\beta_1+c_2\beta_2+\cdots+c_k\beta_k,\ \ \ c_i\in \bn,\ \beta_i\in \Sigma,\ \beta_i\neq \beta_j\  \forall i\neq j$$ 
which is possible by part (1). If $k=1$, then $c_1 = 2$, and by definition we have $\beta \notin \Pi(\Psi)$.
If $k \ge 2$, then $\beta_1 \notin \mathbb{N}\beta$ and $\beta_1 \le \beta$, which again implies $\beta \notin \Pi(\Psi)$.
Thus we obtain $\Pi(\Psi) \subseteq \Sigma$.
Now let $\beta \in \Sigma$ and suppose that there exists $\gamma \in \Psi^{+}$ with $\gamma\preceq \beta$ and $\gamma\notin \bn  \beta$. Then we can write
\begin{equation}\label{66ght}\beta=a\gamma+\sum_{\tau\in \Psi^+\backslash\{\beta,\gamma\}}a_\tau \tau,\ \ \ a,a_\tau\in \mathbb{Q}_{\geq 0},\ a\neq 0.\end{equation} Since $\Psi^+\subseteq \bz_+\Sigma$ we can replace each $\gamma$ and $\tau$ in \eqref{66ght} to obtain an expression of the form
$\beta=\sum_{\mu\in \Sigma} b_\mu \mu$ for some $b_\mu\in\mathbb{Q}_{\geq 0}$ with $b_\nu\neq 0$ for some $\nu\neq \beta$ (recall $\gamma\notin\bn\beta$). If $b_\beta\geq 1,$ then we have $$\sum_{\mu\in \Sigma\backslash\{\beta\}}b_\mu \mu+(b_\beta-1)\beta=0,$$ which is a contradiction. If $b_\beta<1,$ then we can write $$\beta=\sum_{\mu\in \Sigma\backslash\{\beta\}}\frac{b_\mu}{1-b_\beta} \mu,$$ which is again a contradiction to Definition~\ref{defpi}$(2)$. Hence $\Sigma\subseteq \Pi(\Psi)$ and the proof of part $(2)$ is completed.
Now we shall prove part $(3).$ Part (a) follows from Proposition~\ref{unbrokenrs}(1), Corollary ~\ref{proprootstring}, and Proposition~\ref{prop32}(2). For part (b) assume if possible that $-s_\alpha(\beta)\in \Delta(\lie g(\Sigma))^{\mathrm{re},+}.$ Then we have $\alpha=\frac{1}{\beta(h_\alpha)}(\beta+(-s_\alpha(\beta))).$ Using part (1) we can write $-s_\alpha(\beta)=\sum_{\gamma\in \Sigma}b_\gamma\gamma,\ b_\gamma\in\bz_+.$ Therefore, $$\left(1-\frac{b_\alpha}{\beta(h_\alpha)}\right)\alpha=\frac{1}{\beta(h_\alpha)}\left(\beta+\sum_{\gamma\in \Sigma\backslash\{\alpha\}} b_\gamma\gamma\right).$$
Note that $1-\frac{b_\alpha}{\beta(h_\alpha)}>0$ otherwise, some $\mathbb{Q}_{\geq 0}$-linear combination of positive roots is zero, a contradiction. Hence $\beta\preceq \alpha$ and part (2) implies $\beta\in\bn\alpha,$ a contradiction.
\end{proof}
The following theorem is one of the key results which determines the real roots of a root generated subalgebra by a $\pi$-system.
\begin{thm}\label{proprealroots}
    Let $\Sigma\subseteq \Delta^{+}$ be a $\pi$-system. Then we have $$\Sigma_{\infty}=\Delta(\lie g(\Sigma))^{\mathrm{re}}.$$
\end{thm}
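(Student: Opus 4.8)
The plan is to prove the two inclusions separately, writing $\Psi=\Delta(\lie g(\Sigma))^{\mathrm{re}}$ throughout. Since $\lie g(\Sigma)$ is generated by the $\lie g_{\pm\alpha}$ for $\alpha\in\Sigma$, the set $\Psi$ is symmetric, and by Corollary~\ref{finaffine}(2) it is a closed subroot system; I will use both facts repeatedly. I also record at the outset that $\Sigma_\infty$ is itself a subroot system: if $\mu,\nu\in S_k$ then $s_\mu(\nu)\in S_{k+1}\subseteq\Sigma_\infty$, and $\Sigma_\infty=-\Sigma_\infty$ by construction.

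For the inclusion $\Sigma_\infty\subseteq\Psi$ I would induct on $k$ in the filtration $S_0\subseteq S_1\subseteq\cdots$. For the base step, each $\alpha\in\Sigma$ satisfies $\lie g_\alpha\subseteq\lie g(\Sigma)$, and when $2\alpha\in\Delta^{\mathrm{re}}$ (so $\alpha$ is odd non-isotropic) one has $\lie g_{2\alpha}=[\lie g_\alpha,\lie g_\alpha]\subseteq\lie g(\Sigma)$; hence $S_0\subseteq\Psi$. For the inductive step it suffices to show that $\Psi$ is stable under reflections. If $\alpha\in\Psi$ is non-isotropic, then $-\alpha\in\Delta(\lie g(\Sigma))$ by symmetry, so Proposition~\ref{basiclem}(3) gives $s_\alpha(\beta)\in\Psi$ for every $\beta\in\Psi$. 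If $\alpha\in\Psi$ is isotropic, then $s_\alpha(\beta)\in\{-\beta,\beta,\beta+\alpha\}$, and in the only nontrivial case $\beta+\alpha\in\Delta^{\mathrm{re}}$ closedness of $\Psi$ yields $\beta+\alpha\in\Psi$. Together with symmetry this shows $S_k\subseteq\Psi$ for all $k$, hence $\Sigma_\infty\subseteq\Psi$.

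The reverse inclusion $\Psi\subseteq\Sigma_\infty$ is the heart of the matter, and I would prove $\Psi^+\subseteq\Sigma_\infty$ (whence the claim by symmetry) by induction on the height $\mathrm{ht}(\beta)=\sum_{\gamma\in\Sigma}c_\gamma$, where $\beta=\sum_\gamma c_\gamma\gamma$ with $c_\gamma\in\mathbb{Z}_+$ is the expansion guaranteed by Lemma~\ref{lempisystem}(1). The base case $\mathrm{ht}(\beta)=1$ is $\beta\in\Sigma\subseteq\Sigma_\infty$. For $\mathrm{ht}(\beta)\ge2$ the strategy is to produce a height-lowering reflection by an element of $\Sigma$ and then reverse it inside $\Sigma_\infty$. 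Applying Lemma~\ref{lempisystem}(1) to a nonzero right-normed word spanning $\lie g_\beta$ yields some $\gamma\in\Sigma$ with $c_\gamma\ge1$ and $\beta-\gamma\in\Delta$. I would then seek a \emph{favorable} $\gamma\in\Sigma$ of one of two kinds (after disposing of $\beta=2\gamma\in S_0$). First, if $\gamma$ is non-isotropic with $\beta(h_\gamma)\in\mathbb{N}$ — which Lemma~\ref{lempisystem}(3a) forces as soon as the downward string $\{\beta-k\gamma:k\in\mathbb{Z}_+\}$ meets an imaginary root or contains at least three real roots — then Lemma~\ref{lempisystem}(3b) gives $s_\gamma(\beta)\in\Psi^+$ of strictly smaller height, so $s_\gamma(\beta)\in\Sigma_\infty$ by induction and $\beta=s_\gamma(s_\gamma(\beta))\in\Sigma_\infty$. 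Second, if $\gamma$ is isotropic and $\beta-\gamma\in\Delta^{\mathrm{re}}$, then $\beta-\gamma\in\Psi^+$ by closedness, it has smaller height, it lies in $\Sigma_\infty$ by induction, and $\beta=s_\gamma(\beta-\gamma)\in\Sigma_\infty$ because $s_\gamma(\beta-\gamma)=(\beta-\gamma)+\gamma=\beta$ is real.

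The main obstacle is to guarantee that a favorable $\gamma$ always exists, that is, to rule out a real root $\beta\in\Psi^+$ of height $\ge2$ that is simultaneously dominant for every non-isotropic $\gamma\in\Sigma$ (so $\beta(h_\gamma)\le0$) and detached from every isotropic $\gamma\in\Sigma$ (so $\beta-\gamma\notin\Delta^{\mathrm{re}}$); note that, as the $G_2$-type example $\beta=\alpha_1+\alpha_2$ shows, an \emph{individual} choice of $\gamma$ coming from Lemma~\ref{lempisystem}(1) may be unfavorable even when another choice is favorable, so the point is genuinely the existence of a good direction. This is the super analogue of the fact that a dominant positive root of height $\ge2$ is imaginary. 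I expect to resolve it through the root-string analysis of Section~\ref{secrootstring}: Proposition~\ref{unbrokenrs} and Corollary~\ref{proprootstring} pin down the shape of each $\gamma$-string through $\beta$, the contrapositive of Lemma~\ref{lempisystem}(3a) forces every such string to be short with no interior imaginary roots, and Proposition~\ref{prop32} bounds its length. Feeding this back, together with Lemma~\ref{lemimroots} and the closedness of $\Psi$, into the expansion $\beta=\sum_\gamma c_\gamma\gamma$ should force $\beta$ to be imaginary, contradicting $\beta\in\Psi^{\mathrm{re}}$. Hence a favorable $\gamma$ always exists, which completes the induction and the proof.
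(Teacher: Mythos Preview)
Your argument for the inclusion $\Sigma_\infty\subseteq\Psi$ is correct and essentially what the paper means by ``the reverse inclusion follows from Corollary~\ref{finaffine}.''

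For the hard direction $\Psi\subseteq\Sigma_\infty$, however, there is a genuine gap, which you yourself identify: you do not prove that a favorable $\gamma$ exists. Your final paragraph is a wish list of tools rather than an argument, and the conclusion ``should force $\beta$ to be imaginary'' is not substantiated. The problematic configuration is concrete: every $\gamma\in\Sigma$ appearing in the support of $\beta$ could be isotropic with $\beta-\gamma\in\Delta^{\mathrm{im}}$ (not in $\Delta^{\mathrm{re}}$), in which case neither of your two favorable cases applies. Nothing in the root-string results of Section~\ref{secrootstring} rules this out for a general symmetrizable $\lie g$, and your proposed contradiction via Lemma~\ref{lemimroots} does not go through: one cannot deduce that $\beta$ is imaginary merely from $\beta(h_\gamma)\le0$ for the non-isotropic $\gamma\in\Sigma$ together with $\beta-\gamma\notin\Delta^{\mathrm{re}}$ for the isotropic ones.

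The paper resolves this quite differently. It first invokes Remark~\ref{remdeodhar} to dispose of the case where $\Delta^{\mathrm{re}}$ has no isotropic roots, and then uses Remark~\ref{remafftw} to conclude that $\lie g$ is of finite or (twisted) affine type. From that point on the argument relies on the explicit description of roots and root vectors in these types: the induction is on the height with respect to the standard base $\Pi$ (not with respect to $\Sigma$), one picks a right-normed word for $x_\beta^+$ and sets $r_\beta$ to be the first index at which the tail becomes real, and the classification forces $r_\beta\le3$. The case $r_\beta=3$ is precisely where the tail $\beta_2+\nu$ is imaginary; there the paper uses the explicit affine structure to see that $(\beta_1,\beta_2)\neq0$ (from the form of imaginary root vectors) and then reflects by elements such as $\nu$ or $\beta_1+\beta_2$, which are in $\Sigma_\infty$ by induction but not in $\Sigma$. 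Your scheme, which only reflects by elements of $\Sigma$, cannot reach these reflections, and without the finite/affine reduction you have no control over the imaginary tails.
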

\begin{proof}
    By Remark~\ref{remdeodhar}, it is enough to prove the result when $\lie g$ has at least one isotropic real root. Therefore, the Lie superalgebra $\lie g$ is either of finite or (twisted) affine type. In the reminder of the proof we shall show that $\Delta(\lie g(\Sigma))^{\mathrm{re}}\subseteq \Sigma_\infty$, since the reverse inclusion follows from Corollary~\ref{finaffine}. So let $\beta\in \Delta(\lie g(\Sigma))^{\mathrm{re},+}.$ We shall prove the claim by induction on the height $\mathrm{ht}(\beta)$ with respect to $\Pi$. By Lemma~\ref{lempisystem}(1), we can write
    $$x_\beta^+=[x_{\beta_{1}}^+,[x_{\beta_{2}}^+,\cdots,[x_{\beta_{k-1}}^+,x_{\beta_{k}}^+]\cdots]],$$
    $$\beta=\beta_1+\beta_2+\cdots+\beta_k,\ \ \beta_i\in\Sigma,\ 1\leq i\leq k, \ \beta_j+\cdots+\beta_k\in \Delta(\lie g(\Sigma)),\ 1\leq j\leq k$$
    and thus the induction begins when $\beta\in\Sigma$. So suppose that $\beta\notin \Sigma$ and define $$r_\beta:=\min\{j>1: \beta_{j}+\beta_{j+1}+\cdots+\beta_{k}\text{ is a real root}\}.$$ Since $\beta_{k}$ is a real root, the positive integer $r_\beta$ is well defined. From the explicit description of roots we also have $1<r_{\beta}\leq 3$.
    
\textbf{Case 1:} Let $r_{\beta}=2.$ Then we can write $\beta=\beta_{1}+\gamma$ where $\gamma=\beta_{2}+\beta_{3}+\cdots+\beta_{k}\in \Delta(\lie g(\Sigma))^{\mathrm{re},+}.$ If $\gamma=\beta_{1}$, then $\beta=2\beta_{1}\in \Sigma_0.$ Thus we can assume that $\gamma \neq\beta_{1}.$ By induction hypothesis, we have $\gamma\in \Sigma_k$ for some $k\in \bz_+.$ Moreover, since $\Sigma\subseteq \Sigma_k$ we have that $\beta_{1},\gamma\in \Sigma_k.$ If either $\beta_{1}$ or $\gamma$ is isotropic, say $\gamma$, then $\beta=s_\gamma(\beta_{1})\in\Sigma_{k+1}\subseteq\Sigma_\infty$ and we are done. Thus it remains to consider the case, when both are non-isotropic.

If $\gamma-\beta_{1}$ is not a root, then by the representation theory of $\lie{sl}_2$ or $\lie{osp}(1,2)$, we have that $\beta_{1}(h_\gamma)<0$ and $\gamma(h_{\beta_{1}})<0.$ Thus $\beta_{1}(h_\gamma)=-1$ or $\gamma(h_{\beta_{1}})=-1$ by Lemma~\ref{sumnotreal} and therefore $\beta=s_{\gamma}(\beta_{1})\in \Sigma_\infty$ or $\beta=s_{\beta_{1}}(\gamma)\in \Sigma_\infty.$ 

If $\gamma-\beta_{1}$ is a root, then by Lemma~\ref{lempisystem}(3) we have $\beta(h_{\beta_1})\in\bn$ and $s_{\beta_1}(\beta)\in \Delta(\lie g(\Sigma))^{\mathrm{re},+}.$ Therefore $\mathrm{ht}(s_{\beta_{1}}(\beta))<\mathrm{ht}(\beta)$ and thus by induction $s_{\beta_{1}}(\beta)\in \Sigma_\infty$ and so $\beta=s_{\beta_{1}}s_{\beta_{1}}(\beta)\in \Sigma_\infty.$

\textbf{Case 2:} Let $r_\beta=3.$ We can write $\beta=\beta_{1}+\beta_{2}+\nu$ where $\beta_{2}+\nu$ is an imaginary root. In this case we must have $(\beta_1,\beta_2)\neq 0$, since the explicit description of root vectors shows that 
$$x_{\beta}^+=[x_{\beta_1}^+,[x_{\beta_2}^+,x_{\nu}^+]]\in (\beta_1,\beta_2) \lie g_{\beta}.$$
If $\beta_{1}$ is non-isotropic, then using Lemma~\ref{lempisystem}(3), gives $\beta(h_{\beta_1})\in\bn$ and $s_{\beta_1}(\beta)\in \Delta(\lie g(\Sigma))^{\mathrm{re},+.}$ So the same argument as in Case 1 implies $\beta\in \Sigma_\infty$. So assume in the rest of the proof that $\beta_1$ is isotropic and thus $\beta$ has to be isotropic as well.

\textbf{Case 2.1:} Assume that $\beta_1+\beta_2\in \Delta^{\mathrm{re}}$. Hence it is a real root of $\lie g(\Sigma)$ by closedness and by induction we must have $\beta_1+\beta_2,\nu\in \Sigma_{\infty}$. If either $\beta_1+\beta_2$ or $\nu$ is isotropic, then $\beta \in \Sigma_\infty$ by applying the appropriate odd reflection. We may therefore assume that both of them are non-isotropic. If $\beta_1+\beta_2-\nu\notin \Delta$, then by Lemma~\ref{sumnotreal} we can deduce $(\beta_1+\beta_2)(h_{\nu})=-1$ or $\nu(h_{\beta_1+\beta_2})=-1$ and the claim $\beta\in \Sigma_{\infty}$ is immediate.

So assume that $\beta-2\nu=\beta_1+\beta_2-\nu$ is a root. Since $\beta$ is isotropic and $\beta-\nu$ is real, we have $\beta+\nu\notin \Delta$ by Corollary~\ref{finaffine}(3). From Remark~\ref{remintiso} we also have $\beta(h_{\nu})=2$ and hence
$$\mathbb{S}(\beta,\nu)=\{s_{\nu}(\beta)=\beta-2\nu,\beta-\nu,\beta\}\subseteq \Delta(\lie g(\Sigma))^{\mathrm{re}}.$$
Since $\mathrm{ht}(s_{\nu}(\beta))<\mathrm{ht}(\beta),$ the induction would imply the claim  if $\beta-2\nu\in \Delta^{+}$.

If $\beta-2\nu\in \Delta^{-}$, then $-s_\nu(\beta)=\nu+(\nu-\beta)\in \Delta(\lie g(\Sigma))^{\mathrm{re},+}$. Moreover, since $\beta-\nu\in \Delta^+,$ we have $\mathrm{ht}(-s_\nu(\beta))<\mathrm{ht}(\nu)<\mathrm{ht}(\beta).$ By induction hypothesis, we get $-s_\nu(\beta)\in \Sigma_\infty$ and thus $\beta=-s_\nu(-s_\nu(\beta))\in \Sigma_\infty.$

\textbf{Case 2.2:} Now assume that $\beta_1+\beta_2\notin \Delta^{\mathrm{re}}.$ If $\beta_1+\beta_2\notin \Delta$, we get ($\Sigma$ is a $\pi$-system)  
$$0=[x_{\beta_1}^-,[x_{\beta_1}^+,x_{\beta_2}^+]]=\beta_2(h_{\beta_1})x_{\beta_2}^+\implies (\beta_1,\beta_2)=0.$$
If $\beta_1+\beta_2\in \Delta^{\mathrm{im}}$, then $\beta_2$ is also isotropic and again $(\beta_1,\beta_2)=0$. So in either case $(\beta_1,\beta_2)=0$ which is a contradiction.
\end{proof}
\begin{cor}\label{cormainprop}
    \begin{enumerate}
        \item We have $\lie g(\Sigma_{\infty})=\lie g(\Sigma)$ for any $\pi$-system $\Sigma\subseteq\Delta^{+}.$
        \item Let $\Psi$ be a closed subroot system. If $\Psi$ admits a $\pi$-system $\Sigma,$ then $\Sigma=\Pi(\Sigma_\infty)=\Pi(\Psi).$
    \end{enumerate}
     \begin{proof}
           The first part is immediate from Theorem~\ref{proprealroots} and part (2) follows from Theorem~\ref{proprealroots} and Lemma~\ref{lempisystem}(2).
        \end{proof}
\end{cor}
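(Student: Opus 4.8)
The plan is to deduce both parts directly from Theorem~\ref{proprealroots}, which identifies $\Sigma_\infty$ with $\Delta(\lie g(\Sigma))^{\mathrm{re}}$, together with the bookkeeping already packaged in Lemma~\ref{lempisystem}(2). No new reflection-theoretic input should be required: the substantive work of the section has been done in those two results, and the corollary should reduce to tracing definitions.

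For part (1), I would prove the two inclusions separately. The inclusion $\lie g(\Sigma)\subseteq \lie g(\Sigma_\infty)$ is the easy one: since every element of a $\pi$-system is a real root, we have $\Sigma\subseteq \Sigma_0\subseteq \Sigma_\infty$, so each generating root space $\lie g_{\pm\alpha}$ with $\alpha\in\Sigma$ of $\lie g(\Sigma)$ already occurs among the generators of $\lie g(\Sigma_\infty)$. For the reverse inclusion $\lie g(\Sigma_\infty)\subseteq \lie g(\Sigma)$, I would invoke Theorem~\ref{proprealroots}: every $\alpha\in\Sigma_\infty=\Delta(\lie g(\Sigma))^{\mathrm{re}}$ is by definition a real root of $\lie g(\Sigma)$, so $\lie g_\alpha\cap \lie g(\Sigma)\neq 0$. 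Since real root spaces are one-dimensional, this forces $\lie g_\alpha\subseteq \lie g(\Sigma)$, and because $\Sigma_\infty$ is symmetric the same holds for $\lie g_{-\alpha}$. Thus all generators of $\lie g(\Sigma_\infty)$ lie in $\lie g(\Sigma)$, and the two subalgebras coincide.

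For part (2), the equality $\Pi(\Sigma_\infty)=\Pi(\Psi)$ is immediate from the very definition of $\Psi$ admitting a $\pi$-system, namely $\Psi=\Sigma_\infty$. It then remains to identify $\Sigma$ with $\Pi(\Psi)$. Here I would combine Theorem~\ref{proprealroots}, which gives $\Psi=\Sigma_\infty=\Delta(\lie g(\Sigma))^{\mathrm{re}}$, with Lemma~\ref{lempisystem}(2), whose conclusion $\Sigma=\Pi(\Delta(\lie g(\Sigma))^{\mathrm{re}})$ reads exactly as $\Sigma=\Pi(\Psi)$.

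Since essentially all the content is carried by the two cited results, the corollary is a formal consequence rather than a genuine computation, and I do not expect a real obstacle. The only point deserving a moment's care is the passage from $\lie g_\alpha\cap \lie g(\Sigma)\neq 0$ to $\lie g_\alpha\subseteq \lie g(\Sigma)$ in part (1), which rests on the one-dimensionality of real root spaces recorded earlier; everything else is symbol-pushing through the definitions of $\lie g(S)$, $\Sigma_\infty$, and $\Pi(\Psi)$.
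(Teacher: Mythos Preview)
Your proposal is correct and follows essentially the same approach as the paper: both parts are deduced from Theorem~\ref{proprealroots} together with Lemma~\ref{lempisystem}(2), and you have simply unpacked the word ``immediate'' by writing out the two inclusions in part~(1) and making explicit the one-dimensionality of real root spaces.
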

We end this subsection by obtaining Dynkin's analogous bijections in the superalgebra setting. Define the following maps:
$$\begin{array}{cccccr}
     \vspace{10pt}
     \left\{\begin{array}{c}
            \pi\text{-systems}  \\
             \text{contained in } \Delta^+
       \end{array}\right\}  & \longleftrightarrow & \left\{\begin{array}{c}
            \text{real closed subroot}  \\
             \text{systems admitting}\\
             \text{a }\pi\text{-system}
       \end{array}\right\}& \longleftrightarrow & \left\{\begin{array}{c}
            \text{root generated}\\
             \text{subalgebras generated} \\
             \text{  by }\pi\text{-systems in }\Delta^+
       \end{array}\right\} \\ \vspace{10pt}
        \Sigma  & \longmapsto & \Sigma_\infty\\
    \Pi(\Psi) & \longmapsfrom &\Psi & \longmapsto& \lie g(\Psi)&\\ \\
    &&\Delta(\lie g(S))^{\mathrm{re}}& \longmapsfrom &\lie g(S)&
\end{array}$$
\begin{thm}\label{dynbij}
    The maps defined above are well-defined bijections.
\end{thm}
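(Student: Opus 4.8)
The strategy is to verify that the three maps in the diagram are mutually inverse bijections by checking the round-trips, drawing on the accumulated results of Section~\ref{secrgs}. First I would argue that each map lands in the indicated target set. The map $\Sigma\mapsto\Sigma_\infty$ produces a closed subroot system by Corollary~\ref{finaffine}(2) (the set $\Sigma_\infty$ is symmetric by construction), and it admits a $\pi$-system by definition, namely $\Sigma$ itself; so the target is correct. The map $\Psi\mapsto\lie g(\Psi)$ lands among root-generated subalgebras generated by $\pi$-systems in $\Delta^+$ precisely because $\Psi$ is assumed to admit a $\pi$-system $\Sigma\subseteq\Delta^+$ with $\Psi=\Sigma_\infty$, whence $\lie g(\Psi)=\lie g(\Sigma_\infty)=\lie g(\Sigma)$ by Corollary~\ref{cormainprop}(1). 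The map $\lie g(S)\mapsto\Delta(\lie g(S))^{\mathrm{re}}$ lands among closed subroot systems admitting a $\pi$-system because Theorem~\ref{proprealroots} identifies this set with $S_\infty$, and the map $\Psi\mapsto\Pi(\Psi)$ produces a $\pi$-system in $\Delta^+$ whenever $\Psi$ admits one, by Corollary~\ref{cormainprop}(2), with $\Pi(\Psi)\neq\emptyset$ guaranteed by Lemma~\ref{lempipsi}.

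Next I would verify the round-trips that force the maps to be inverse bijections. Starting from a $\pi$-system $\Sigma\subseteq\Delta^+$, the composite $\Sigma\mapsto\Sigma_\infty\mapsto\Pi(\Sigma_\infty)$ returns $\Sigma$ by Corollary~\ref{cormainprop}(2); equivalently, Lemma~\ref{lempisystem}(2) gives $\Sigma=\Pi(\Psi)$ for $\Psi=\Delta(\lie g(\Sigma))^{\mathrm{re}}=\Sigma_\infty$. Starting from a closed subroot system $\Psi$ admitting a $\pi$-system, the composite $\Psi\mapsto\Pi(\Psi)\mapsto\Pi(\Psi)_\infty$ returns $\Psi$: writing $\Psi=\Sigma_\infty$ for its $\pi$-system $\Sigma$, Corollary~\ref{cormainprop}(2) yields $\Pi(\Psi)=\Sigma$, and hence $\Pi(\Psi)_\infty=\Sigma_\infty=\Psi$. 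For the subalgebra side, starting from $\lie g(S)$ with $S$ a $\pi$-system in $\Delta^+$, the composite $\lie g(S)\mapsto\Delta(\lie g(S))^{\mathrm{re}}=S_\infty\mapsto\lie g(S_\infty)=\lie g(S)$ closes up by Theorem~\ref{proprealroots} followed by Corollary~\ref{cormainprop}(1). These three identities, taken together, show that the two horizontal arrows are bijections with the stated inverses.

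The one genuine subtlety, which I would isolate and treat carefully, is that the right-hand arrow $\Psi\mapsto\lie g(\Psi)$ and its inverse $\lie g(S)\mapsto\Delta(\lie g(S))^{\mathrm{re}}$ are well-defined as maps between the \emph{stated} sets rather than being merely consistent on representatives. Concretely, I must check that $\lie g(\Psi)$ depends only on $\Psi$ and not on the choice of $\pi$-system presenting it, and that two closed subroot systems with the same root-generated subalgebra coincide. The first is immediate once we know $\lie g(\Psi)$ is determined by its real roots together with the equality $\Delta(\lie g(\Psi))^{\mathrm{re}}=\Psi$ (from Theorem~\ref{proprealroots} applied to any presenting $\pi$-system), and the second follows from the same equality read in the reverse direction. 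I expect this bookkeeping---tracking which set each object belongs to and confirming that the composite maps are literally the identity rather than the identity up to the equivalence $\lie g(\Sigma)=\lie g(\Sigma_\infty)$---to be the main place where care is required, since all the substantive mathematical content has already been established in Theorem~\ref{proprealroots}, Corollary~\ref{cormainprop}, and Lemma~\ref{lempisystem}.
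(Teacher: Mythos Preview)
Your proposal is correct and follows essentially the same approach as the paper's proof: both arguments reduce well-definedness and bijectivity to Theorem~\ref{proprealroots}, Corollary~\ref{cormainprop}, and Corollary~\ref{finaffine}. The only cosmetic difference is that you organize the argument by verifying that the displayed inverse maps are genuine two-sided inverses (checking round-trips), whereas the paper checks well-definedness, surjectivity, and injectivity separately for each arrow; the mathematical content is identical.
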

\begin{proof}
    First, we prove that the first map is a bijection. By Theorem~\ref{proprealroots} and Corollary~\ref{finaffine}, we have that $\Sigma_\infty$ is a closed subroot system and by definition, it admits a $\pi$-system $\Sigma.$ Therefore, the map is well defined. Clearly, the map is surjective and the injectivity follows from Corollary ~\ref{cormainprop}(2).

    We now prove that the second map is a bijection. Let $\Psi$ be a closed subroot system admitting a $\pi$-system $\Sigma$. Then $\Psi = \Sigma_\infty$, and by Corollary~\ref{cormainprop}(1), we have $\mathfrak g(\Sigma_\infty) = \mathfrak g(\Sigma)$, so the map is well defined and surjective. For injectivity, let $\Psi' = \Sigma'_\infty$ and $\Psi'' = \Sigma''_\infty$ be two closed subroot systems, both admitting $\pi$-systems, such that $\mathfrak g(\Psi') = \mathfrak g(\Psi'')$. By Theorem~\ref{proprealroots} and Corollary~\ref{cormainprop}(1) we get $\Sigma'_\infty = \Sigma''_\infty$, and applying Corollary~\ref{cormainprop}(2) we obtain
    $$\Sigma'=\Pi(\Sigma'_\infty)=\Pi(\Sigma''_\infty)=\Sigma''.$$
    This completes the proof.
\end{proof}
\subsection{}\label{secemb}
In this subsection, we describe the graded embedding problem for Kac--Moody superalgebras and explain how $\pi$-systems arise naturally in this setting. Since their introduction by Dynkin~\cite{dynkin52semisimple}, $\pi$-systems have played a central role in this area, and numerous graded embeddings have been constructed using them. For further details, we refer the reader to~\cite{feingold2004subalgebras, viswanath08embedding}.

In contrast, the literature on graded embeddings for Kac--Moody superalgebras remains relatively limited. In \cite{vander87regular}, the authors showed that, when $\mathfrak{g}$ is finite-dimensional, the root generated subalgebra associated with a linearly independent $\pi$-system is semisimple. Furthermore, in \cite{Nayak13Embedding}, an analogue of the result in \cite{viswanath08embedding} was established in the superalgebra setting. However, rather than working with the maximal ideal intersecting $\mathfrak{h}$ trivially, they considered only the Serre relations together with the contragredient relations. 
 
Let $\mathfrak{g}$ be symmetrizable, and fix a matrix $B$ with index set $J$ such that there exists a graded embedding, i.e., a map sending the Chevalley generators to real root vectors:
\[
f: \mathfrak{g}(B) \hookrightarrow \mathfrak{g}, \quad y_j^\pm \mapsto x_{\beta_j}^\pm, \quad j \in J,
\]
where $\Sigma := \{\beta_j : j \in J\} \subseteq \Delta^{\mathrm{re}}$, and $y_j^\pm$ denote the Chevalley generators of $\mathfrak{g}(B)$.

\begin{lemma}
We have that $\Sigma= \{\beta_j : j \in J\}$ is a linearly independent $\pi$-system and $f$ induces an isomorphism $\tilde{f}: \lie g'(B)\to \lie g(\Sigma)$.
    \begin{proof}
        From the injectivity and Corollary~\ref{finaffine} we obtain that $\Sigma$ is a linearly independent $\pi$-system. The restriction of $f$ to $\lie g'(B)$ remains injective and the image is given by the root generated subalgebra $\lie g(\Sigma)$.
    \end{proof}
\end{lemma}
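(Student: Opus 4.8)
The plan is to verify the two axioms of a $\pi$-system together with linear independence, and then to observe that the isomorphism statement is essentially formal. First I would record the structural input: since $f$ is a homomorphism of Lie superalgebras, applying it to the defining relations of $\lie g(B)$ yields relations among the images $x_{\beta_j}^{\pm}$. In particular $[x_{\beta_i}^+,x_{\beta_j}^-]=f([y_i^+,y_j^-])=0$ for $i\neq j$, while $h_{\beta_j}:=[x_{\beta_j}^+,x_{\beta_j}^-]=f(h_j^B)$, where $h_j^B:=[y_j^+,y_j^-]$ is the $j$-th simple coroot of $\lie g(B)$.

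For axiom (1) of Definition~\ref{defpi} I would show $\beta_i-\beta_j\notin\Delta$ for distinct $\beta_i,\beta_j\in\Sigma$. Since $\beta_i$ and $\beta_j$ are real, the root spaces $\lie g_{\pm\beta_i}$ and $\lie g_{\pm\beta_j}$ are one-dimensional, so $[\lie g_{\beta_i},\lie g_{-\beta_j}]=\bc\,[x_{\beta_i}^+,x_{\beta_j}^-]=0$. Applying Corollary~\ref{finaffine}(1) with $\lie s=\lie g$ to the real roots $\beta_i$ and $-\beta_j$ (the latter real by Remark~\ref{triple}(3)) forces $\beta_i-\beta_j\notin\Delta$. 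The $\beta_j$ are moreover pairwise distinct: if $\beta_i=\beta_j$ for $i\neq j$, then $x_{\beta_i}^+$ and $x_{\beta_j}^+$ would be proportional, so a nontrivial combination of $y_i^+,y_j^+$ would lie in $\ker f$, contradicting injectivity.

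The crux is linear independence, which I expect to be the main obstacle, since in the super (and general Kac--Moody) setting $\pi$-systems need not be independent. Here I would exploit symmetrizability. The simple coroots $h_j^B$ are linearly independent by the construction of $\lie g(B)$, and injectivity of $f$ transports this to linear independence of $\{h_{\beta_j}\}$. By Remark~\ref{triple}(2), for each real root $\beta$ the element $\nu(h_\beta)$ is a nonzero scalar multiple of $\beta$ (equal to $2\beta/(\beta,\beta)$ when $\beta$ is non-isotropic, and proportional to $\beta$ when $\beta$ is isotropic). Applying the isomorphism $\nu$, the family $\{\nu(h_{\beta_j})\}=\{c_j\beta_j\}$ with all $c_j\neq 0$ is linearly independent, whence so is $\{\beta_j\}$. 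Linear independence then yields axiom (2) at once, as no $\beta_i$ can lie in $\sum_{j\neq i}\mathbb{Q}_{\geq 0}\beta_j$.

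Finally, for the isomorphism $\tilde f$ I would use that $\lie g'(B)$ is generated by the Chevalley generators $y_j^{\pm}$, so $f(\lie g'(B))$ is the subsuperalgebra generated by $\{x_{\beta_j}^{\pm}\}$; since the one-dimensional spaces $\lie g_{\pm\beta_j}$ are spanned by $x_{\beta_j}^{\pm}$, this is precisely the root generated subalgebra $\lie g(\Sigma)$. The restriction $\tilde f:=f|_{\lie g'(B)}$ is injective because $f$ is, and surjective onto $\lie g(\Sigma)$ by the previous sentence, hence an isomorphism. Apart from the linear-independence step, everything reduces to transporting relations through $f$ and to Corollary~\ref{finaffine}.
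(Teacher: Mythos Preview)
Your proposal is correct and follows essentially the same approach as the paper's (very terse) proof: the relation $[x_{\beta_i}^+,x_{\beta_j}^-]=0$ together with Corollary~\ref{finaffine}(1) yields axiom~(1), while injectivity of $f$ on the coroots combined with the symmetrizability isomorphism $\nu$ from Remark~\ref{triple}(2) yields linear independence (and hence axiom~(2)). The isomorphism $\tilde f$ is then formal, exactly as you say; your write-up simply unpacks what the paper compresses into one sentence.
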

Thus the derived algebra of any such embedding is generated by some $\pi$-system of $\lie g$. 
\begin{rem}
     Although not every root generated subalgebra in the super setting is generated by a $\pi$-system contained in $\Delta^+$, this is the case in many examples of graded embeddings. It would be interesting to establish a general proof of this phenomenon. If $\lie g$ is a symmetrizable Kac-Moody algebra, then this fact has been proven in \cite[Proposition 3.4]{idv2023root}, i.e., there is a $\pi$-system $\Sigma'\subseteq\Delta^{\mathrm{re},+}$ with  $\lie g'(B)\cong \lie g(\Sigma)=\lie g(\Sigma').$
\end{rem}
The graded embedding problem has also various applications in physics, see for example \cite{damour03cosmological, feingold83ahyperbolic, superspace81}. For instance, the real roots and representations of the hyperbolic Kac-Moody algebra $ E_{10} $ are known to correspond to the fields of 11-dimensional supergravity at low levels \cite{damour03cosmological} and the graded embedding of $AE_3$ into $E_{10}$ (proven in \cite{viswanath08embedding}) reflects the embedding of Einstein gravity into 11-dimensional supergravity. An alternative embedding of $AE_3$ into $ E_{10} $ was obtained through the gravity truncation method described in \cite{damour09fermionic}. 

In the Gaiotto--Witten Janus configuration (\cite{GaiottoWitten2010Janus}), the authors show how a four-dimensional $\mathcal{N}=4$ super Yang--Mills theory with a spatially varying coupling (including the theta-angle) gives rise, at the interface, to a three-dimensional  \(\mathcal{N}=4\) Chern--Simons theory. This can be viewed algebraically as embedding the smaller 3d supersymmetry (and its associated superalgebra) into the larger 4d superalgebra across the interface, realizing a graded embedding of lower-dimensional supersymmetry in a higher-dimensional theory. In \cite{BershadskyOoguri1989Hidden} the authors showed that a graded subalgebra of $\lie{osp}(n,2)$ acts as a spectrum-generating symmetry on the space of physical fields in certain superconformal field theories. The constraints of this hidden symmetry are then used to compute correlation functions.

\bibliographystyle{plain}
\bibliography{ref}
\end{document}